\theoremstyle{definition}
\newtheorem{theorem}[equation]{Theorem}
\newtheorem{lemma}[equation]{Lemma}
\newtheorem{proposition}[equation]{Proposition}
\newtheorem{corollary}[equation]{Corollary}
\newtheorem{definition}[equation]{Definition}
\newtheorem{example}[equation]{Example}
\newtheorem{question}[equation]{Question}
\newtheorem{conjecture}[equation]{Conjecture}
\newtheorem{remark}[equation]{Remark}
\numberwithin{equation}{section}
\newcommand{\bC}{\mathbf{C}}
\newcommand{\bA}{\mathbf{A}}
\newcommand{\Spec}{\operatorname{\mathsf{Spec}}}
\newcommand{\cO}{\mathcal{O}}
\newcommand{\bZ}{\mathbf{Z}}
\newcommand{\rmD}{\mathrm{D}}
\newcommand{\caD}{\mathcal{D}}
\newcommand{\Hom}{\operatorname{Hom}}
\newcommand{\Id}{\operatorname{Id}}
\newcommand{\gr}{\operatorname{gr}}
\newcommand{\IC}{\operatorname{IC}}
\newcommand{\Ext}{\operatorname{Ext}}
\newcommand{\coker}{\operatorname{coker}}
\newcommand{\db}{\operatorname{db}}
   \def\MR#1{}
\title{On $\caD$-modules related to the $b$-function and Hamiltonian flow}
\author{Thomas Bitoun}
\email{tbitoun@gmail.com}
\address{Mathematical Institute\\University of Oxford\\Oxford OX2 6GG\\UK}
\author{Travis Schedler}
\email{trasched@gmail.com}
\address{Department of Mathematics\\Imperial College London\\London SW7 2AZ\\UK}
\keywords{Hamiltonian flow, Milnor number, 
D-modules, Poisson homology, Poisson varieties, Bernstein-Sato polynomial, b-function, local cohomology, nearby cycles, Hodge filtration, Milnor fiber, genus, isolated hypersurface singularities}
\thanks{
The first author was supported by EPSRC grant EP/L005190/1.
The second author was partly supported by US National Science
Foundation Grant DMS-1406553, and is grateful to the Max Planck Institute for Mathematics in Bonn for excellent working conditions.}
\begin{document}


\begin{abstract}
  Let $f$ be a quasi-homogeneous polynomial with an isolated
  singularity in $\bC^n$.  We compute the length of the $\caD$-modules $\caD
  f^\lambda/\caD f^{\lambda+1}$ 
  generated by complex powers of $f$ in terms of the Hodge filtration
  on the top cohomology of the Milnor fiber.  When $\lambda = -1$ we
  obtain
  one more than the reduced genus of the singularity
  ($\dim H^{n-2}(Z,\cO_Z)$ for $Z$ the exceptional fiber of a
  resolution of singularities).
  We conjecture that this holds without the quasi-homogeneous
  assumption.  We also deduce that the quotient
  $\caD f^\lambda / \caD f^{\lambda+1}$ is nonzero when $\lambda$ is a
  root of the $b$-function of $f$ (which Saito recently showed fails
  to hold in the inhomogeneous case).  We obtain these results by
  comparing these $\caD$-modules to those defined by Etingof and the
  second author which represent invariants under Hamiltonian flow.
\end{abstract}

\maketitle


\section{Introduction}

\subsection{Length of $\caD  \frac{1}{f}$ and local cohomology}

Throughout the paper, $R$ will denote the ring of complex polynomials in $n \geq 3$ variables and $\mathcal{D}$ the
ring of complex polynomial differential operators in $n$ variables.
For $f \in R$, we consider the left $\caD$-modules $R$ and $R[\frac{1}{f}]$.

In this paper we compute the length of the left
$\mathcal{D}$-submodule of $R[\frac{1}{f}]$ generated by $\frac{1}{f}$
for $f$ a quasi-homogeneous polynomial with an isolated
singularity. We denote this module by $\caD \frac{1}{f}$. Actually it
turns out to be more natural to work with the quotient
$R[\frac{1}{f}]/R$, which is the first local cohomology group
$H^1_f(R)$, and its left $\mathcal{D}$-submodule generated by the class
of $\frac{1}{f}$, i.e. $\caD \frac{1}{f} / R$. As $R$ is simple, its length is precisely one less.

Let us state our main result, Theorem \ref{t:mt-qh}, whose proof is in Section \ref{s:pf-c:mc}, in the case of $f$ a homogeneous
polynomial with an isolated singularity, i.e., the zero-locus of $f$
is the cone over a smooth projective variety $Y$ of dimension $n-2$. 

\begin{theorem} \label{t:mt-homogen} 
The length of the left
  $\mathcal{D}$-submodule of $H^1_f(R)$ generated by the class of
  $\frac{1}{f}$ is $1+\dim H^{n-2}(Y, \mathcal{O}_Y).$
\end{theorem}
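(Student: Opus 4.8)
The plan is to recognise the statement as the homogeneous, $\lambda=-1$ instance of the general length formula (Theorem~\ref{t:mt-qh}), and to supply the two inputs this requires: a composition‑factor analysis of $H^1_f(R)$, and the geometric identification of the invariant appearing there.

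\emph{Step 1: reduce to a $\delta$‑multiplicity.} Since $f$ has an isolated singularity at the origin, $D:=f^{-1}(0)$ is reduced, irreducible, and smooth away from $0$, so $H^1_f(R)=R[\tfrac1f]/R$ is holonomic of generic rank one along $D$, with characteristic variety inside $T^*_D\bC^n\cup T^*_0\bC^n$. Hence its only composition factors are the intersection‑cohomology module $\IC_D$, which occurs exactly once for rank reasons, and copies of the delta module $\delta_0:=\caD/\caD(x_1,\dots,x_n)$ at the origin. The submodule $\caD\tfrac1f/R$ contains the nonzero class of $\tfrac1f$, hence also has generic rank one along $D$; so $\IC_D$ occurs in it once and $\len\bigl(\caD\tfrac1f/R\bigr)=1+q$, where $q$ is the multiplicity of $\delta_0$. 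It remains to show $q=\dim H^{n-2}(Y,\cO_Y)$.

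\emph{Step 2: apply Theorem~\ref{t:mt-qh} and identify $Z$.} The submodule of $H^1_f(R)$ generated by $\tfrac1f$ is $\caD f^{-1}/\caD f^{0}$, so its length is computed by Theorem~\ref{t:mt-qh}; at $\lambda=-1$ the answer is $1+\dim H^{n-2}(Z,\cO_Z)$, with $Z$ the exceptional fibre of a resolution of the singularity of $D$. For $f$ homogeneous of degree $d$, this singularity is the vertex of the cone over $Y$, and the blow‑up of the vertex is the total space of $\cO_Y(-1)$, which is smooth with exceptional fibre the zero section $Y$; hence $\dim H^{n-2}(Z,\cO_Z)=\dim H^{n-2}(Y,\cO_Y)$, as claimed. (Sanity check on the numerology: by Serre duality and the adjunction $\omega_Y\cong\cO_Y(d-n)$ this dimension is $\binom{d-1}{n-1}$ for $d\ge n$ and $0$ otherwise, matching $\dim(R/J_f)_{d-n}$ for $J_f$ the Jacobian ideal, i.e. the degree in which $1$ first appears among the spectral exponents of $f$.)

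\emph{Step 3: the content of Theorem~\ref{t:mt-qh}, and the main obstacle.} The substantive work is the general length formula itself, obtained by comparing $\caD f^{\lambda}/\caD f^{\lambda+1}$ with the $\caD$‑module of Etingof and the second author representing functions invariant under the Hamiltonian flow of $f$, whose de Rham cohomology is the vanishing cohomology $H^{n-1}(F,\bC)$ of the Milnor fibre with its monodromy and Hodge filtration; the length is then read off from a single graded piece of the Hodge filtration on the $e^{2\pi i\lambda}$‑eigenspace. For $\lambda=-1$ this is the unipotent eigenspace $H^{n-1}(F)_1$, and for homogeneous $f$ the map $F=\{f=1\}\to\bP^{n-1}\setminus Y$ is an unramified cyclic $d$‑fold cover, so $H^{n-1}(F)_1=H^{n-1}(\bP^{n-1}\setminus Y)$, whose relevant Hodge graded piece is $H^{n-2}(Y,\cO_Y)$ by the Gysin sequence for $Y\hookrightarrow\bP^{n-1}$ (equivalently Griffiths' residue description). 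Steps~1 and~2 are essentially formal; the hard part is Step~3, namely constructing the Hamiltonian‑invariant comparison and pinning down precisely which graded piece of the Hodge‑filtered vanishing cohomology controls $\len(\caD f^{\lambda}/\caD f^{\lambda+1})$, and checking that at $\lambda=-1$ it is the bottom of the Hodge filtration on the unipotent part, so that in the homogeneous case it is exactly $H^{n-2}(Y,\cO_Y)$.
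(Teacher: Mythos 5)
Your Steps 1--2 are fine and are exactly how the paper itself deduces this statement: Theorem \ref{t:mt-homogen} is the homogeneous instance of Theorem \ref{t:mt-qh}, and the only extra input is that for homogeneous $f$ the blow-up of the vertex resolves the cone with exceptional fibre $Y$, so the reduced genus $g_0$ is $\dim H^{n-2}(Y,\cO_Y)$ (this is the paper's remark after Definition \ref{definition: reduced genus}; your identification of the blow-up with the total space of $\cO_Y(-1)$ is a correct way to see it). The composition-factor bookkeeping in Step 1 is harmless but not needed once \ref{t:mt-qh} is invoked.

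The gap is Step 3, which you yourself flag as the substantive content: you do not prove Theorem \ref{t:mt-qh}, and the sketch you give is not a proof and does not match how the paper actually handles the case $\lambda=-1$, which is the case you need. The principle ``the length is read off from a single graded piece of the Hodge filtration on the Milnor cohomology'' is established in the paper only for $\lambda\neq-1$ (Corollary \ref{c:lneq-1}, via Theorem \ref{t:lneq-1}(\ref{item: lambda}) and \cite{MR1943036}); at $\lambda=-1$ the module $\caD f^{-1}/\caD f^{0}$ contains the $\IC(X)$ factor and the $\delta$-count is exactly the delicate point, since the natural upper bound coming from $H^0 j^X_!\cO_{X^\circ}$ is $\dim H^{n-2}(X^\circ,\bC)$, not $g_0$, and these differ in general (cf.\ Example \ref{e:curve} and Theorem \ref{t:lneq-1}(\ref{item: -1})). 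The paper closes this gap not by Milnor-fibre Hodge theory but by comparing with the Hamiltonian-flow module $M(f)$: Proposition \ref{prop: alpha} identifies $\ker(M(f)\to\caD\tfrac1f/R)$ with the maximal submodule supported at the singularity, the structure results of \cite{MR3283930} (Theorems 2.7, 3.1 and Proposition 3.2) give the length of the relevant quotient as $1+\dim\Gamma(X,\cO_X)_d$, and Lemma \ref{l:dim-gamma} (Grothendieck--Serre duality plus Grauert--Riemenschneider on a log resolution) identifies $\dim\Gamma(X,\cO_X)_d$ with $g_0$. Your proposed route through the unipotent part of $H^{n-1}(F,\bC)$ and Griffiths residues is plausibly viable (it is close in spirit to Saito's approach and to Theorem \ref{t:steenbrink}), but as written the key claim --- that the $\delta_0$-multiplicity $q$ of $\caD\tfrac1f/R$ equals the bottom Hodge graded piece of the unipotent Milnor cohomology --- is asserted, not proved, so the argument is incomplete precisely where the theorem's content lies.
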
  

\begin{remark} As an application, we can obtain a necessary and
  sufficient condition for the left $\mathcal{D}$-module
  $R[\frac{1}{f}]$ to be generated by $\frac{1}{f}$.  It is well known
  that $R[\frac{1}{f}]$ is generated by $\frac{1}{f}$ if and only if
  the Bernstein-Sato polynomial of $f$ has no other integral root than
  $-1.$ Indeed for $\lambda$ the lowest integral root,
  $\mathcal{D} f^\lambda / \mathcal{D} f^{\lambda+1} \neq 0$ by
  \cite[Proposition 6.2]{MR0430304}. 
On the other hand, in the case of
  the theorem the length of $R[\frac{1}{f}]/R = H^1_f(R)$ is
  $1+\dim H^{n-2}(X^\circ, \bC)$, where
  $X^\circ = \{f=0\} \setminus\{0\}$ is the punctured cone over $Y$,
  as follows from Lemmas \ref{l:j*maxind} and \ref{l:j!-sub}, via
  Verdier duality. Note that $H^{n-2}(X^\circ, \bC)$ identifies with
  the space of primitive cohomology classes of $Y$ in degree
  $n-2$. Therefore the theorem implies that $H^1_f(R)$, and hence
  $R[\frac{1}{f}]$, is generated by $\frac{1}{f}$ if and only if the
  space of primitive cohomology classes of $Y$ in degree $n-2$ equals
  $H^{n-2}(Y,\mathcal{O}_Y)$ (by the Hodge decomposition, the latter
  is always a summand of the former, so the statement is that this
  summand is everything).
  \end{remark}
  
\begin{example} \label{e:curve} When $n=3$, $Y$ is a smooth curve of genus
  $g= \dim H^1(Y, \mathcal{O}_Y).$ In this case $\dim
  H^1(X^\circ, \bC) = \dim H^1(Y,\bC) = 2g$ so that the length of
  $H^1_f(R)$ is $1+2g$. On the other hand, by the theorem the length of
  the submodule generated by $\frac{1}{f}$ is only $1+g$.  
\end{example}

In the case of a general $f$ with an isolated singularity, the genus
generalizes as follows.
\begin{definition} \label{definition: reduced genus} Let $X$ be a
  variety with an isolated singularity at $x \in X$. Let
  $\rho: \widetilde X \to X$ be a resolution of singularities and
  $Y := \rho^{-1}(x)$.
  The reduced genus $g_x$ of the singularity $x$ is defined as
  $\dim H^{n-2}(Y, \mathcal{O}_Y).$
\end{definition}
The reduced genus is independent of the choice of resolution since the
derived pushforward $R\rho_* \mathcal{O}_{\widetilde X}$ is
independent of the choice of resolution (any two resolutions can be
dominated by a third one, and for a resolution of a smooth variety
this pushforward is underived). Then, one can show that
$H^{n-2}(Y,\mathcal{O}_Y)$ is
the fiber at $x \in X = \Spec R^0 \rho_* \mathcal{O}_{\widetilde X}$
of $R^{n-2} \rho_* \mathcal{O}_{\widetilde X}$ (since $H^{>n-2}(Y,\mathcal{O}_Y)=0$).
\begin{remark} If $f$ is homogeneous with an isolated singularity at
  the origin, then $g_0$ is equal to $\dim H^{n-2}(Y', \mathcal{O}_{Y'})$, where
  the zero-locus of $f$ is the cone over a smooth projective variety
  $Y'$ of dimension $n-2$. Indeed $Y'$ is the exceptional fiber of the
  blow-up of the origin.\end{remark}
\begin{remark} We call the dimension of the fiber
  $H^{n-2}(Y, \mathcal{O}_Y)$ of
  $R^{n-2} \rho_* \mathcal{O}_{\widetilde X}$ at $x$ the reduced genus,
  in contrast to the geometric genus, which is the dimension of the
  entire stalk of $R^{n-2} \rho_* \mathcal{O}_{\widetilde X}$ at $x$,
  i.e., $\dim H^{n-2}(\rho^{-1}(U), \mathcal{O}_{\rho^{-1}(U)})$ for
  $U$ a small ball around $x$.  Already when $X$ is the cone over a
  smooth projective plane curve $Y$ of degree $d \geq 4$, the
  geometric genus exceeds the reduced genus, with the former
  $\frac{d(d-1)(d-2)}{6}$ and the latter $\frac{(d-1)(d-2)}{2}$ (the
  genus of $Y$).
\end{remark}



Our theorem is a particular case of the following conjecture: 
\begin{conjecture}\label{c:mc} Let $f$ be an irreducible
complex polynomial with a unique
  singularity $x$. The length of the left $\mathcal{D}$-submodule of
  $H^1_f(R)$ generated by $\frac{1}{f}$ is $1 + g_x.$
\end{conjecture}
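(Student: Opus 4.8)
My plan follows the strategy of the quasi-homogeneous case (Theorem~\ref{t:mt-qh}): first reduce the length computation to a single point-module multiplicity, and then extract that multiplicity from the comparison with the Hamiltonian-flow $\caD$-modules of Etingof and the second author. For the reduction, note that since $f$ is irreducible $X := \{f = 0\}$ is irreducible, and since $x$ is its only singular point $X \setminus \{x\}$ is a connected smooth hypersurface in $\bC^n \setminus \{x\}$; there $R[\tfrac{1}{f}]$ is generated over $\caD$ by $\tfrac{1}{f}$, so $\caD\tfrac{1}{f}/R = \caD f^{-1}/\caD f^0$ restricts to the simple $\caD$-module $j_{!*}(\cO_{X \setminus \{x\}})$. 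Hence every composition factor of $\caD f^{-1}/\caD f^0$ other than the intermediate extension $\IC_X$, which occurs once, is the simple $\caD$-module $\delta_x$ supported at $x$, and the length equals $1 + m$ with $m$ the multiplicity of $\delta_x$. So it suffices to prove $m = g_x = \dim H^{n-2}(Y,\cO_Y)$.

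For the multiplicity, I would use that every polynomial vector field tangent to the fibres of $f$ annihilates $f^\lambda$, giving a canonical surjection $M(f) \onto \caD f^\lambda$ for each $\lambda$, where $M(f)$ is the holonomic $\caD$-module of Etingof and the second author representing Hamiltonian-flow invariants. Their analysis of $M(f)$ and of the cokernels $\caD f^\lambda/\caD f^{\lambda+1}$ identifies $m$, the multiplicity of $\delta_x$ at $\lambda = -1$, with a dimension built from the top cohomology $H^{n-1}(F)$ of the Milnor fibre $F$ of $f$ at $x$, carrying its monodromy $T$ and Hodge filtration. In the quasi-homogeneous case $T$ has finite order, $H^{n-1}(F)$ is $T$-semisimple, $M(f)$ splits along the weight grading, and one finds that $m$ equals the dimension of the $\cO$-end of the Hodge filtration on the $T$-eigenvalue-$1$ part (the $\lambda = -1$ part) of $\widetilde H^{n-1}(F)$. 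For homogeneous $f$ this eigenvalue-$1$ part is the primitive middle cohomology $H^{n-2}_{\mathrm{prim}}(Y)$ of the smooth projective hypersurface $Y$, whose $\cO_Y$-Hodge piece equals $H^{n-2}(Y,\cO_Y)$ since $H^{0,n-2}(\bP^{n-1}) = 0$; thus $m = g_0$, and an analogous identification of $g_x$ with the corresponding Hodge piece of $H^{n-1}(F)$ settles the general quasi-homogeneous case.

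The hard part is exactly this last step without quasi-homogeneity: then $T$ need not be semisimple, $M(f)$ is no longer a direct sum of the simple building blocks governed by the Hodge filtration, and the nilpotent part of $T$ obstructs reading off $m$ eigenvalue by eigenvalue --- this is the mechanism behind Saito's examples in which $\caD f^\lambda/\caD f^{\lambda+1}$ vanishes at a root $\lambda$ of the $b$-function. I expect that proving Conjecture~\ref{c:mc} in general will instead require a direct study of the Hodge and $V$-filtrations on $H^1_f(R) = H^1_{[X]}\cO_{\bC^n}$ near $x$, fine enough to pin down the $\caD$-submodule generated by the class of $\tfrac{1}{f}$.
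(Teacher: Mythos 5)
The statement you are addressing is Conjecture~\ref{c:mc}, which the paper itself does not prove: it establishes only the quasi-homogeneous case (Theorem~\ref{t:mt-qh}) and leaves the general case open. Your proposal is in the same position, and by its own admission: your final paragraph offers no argument for a general isolated singularity, only the (correct) observation that non-semisimple monodromy and Saito's examples obstruct the eigenvalue-by-eigenvalue bookkeeping. So as a proof of the conjecture there is a gap that is not a technicality but the entire open problem. Your first reduction is fine: since $\caD\frac{1}{f}/R$ restricts to $\cO_{X^\circ}$ on the smooth locus, its composition factors are one copy of $\IC(X)$ plus $m$ copies of $\delta_x$, and the conjecture is exactly $m=g_x$.

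Even restricted to the quasi-homogeneous case, your key step is asserted rather than proved. You claim that the analysis of $M(f)$ ``identifies'' $m$ with the dimension of the last Hodge piece of the $T$-eigenvalue-$1$ part of $\widetilde H^{n-1}(F)$ and that this equals $g_x$; neither identification is derived, and the second is essentially the content one must prove. The paper's actual mechanism is different: Proposition~\ref{prop: alpha} shows that the kernel of $M(f)\to\caD\frac{1}{f}/R$ is precisely the maximal submodule of $M(f)$ supported at $x$ (this needs Lemma~\ref{l:j*maxind}, that $H^1_f(R)$ has no submodule supported at the singularity --- a point your sketch skips, and without which you only control $m$ from one side); then \cite[Theorem 3.1, Proposition 3.2]{MR3283930} give length $1+\dim\Gamma(X,\cO_X)_d$, and Lemma~\ref{l:dim-gamma} proves $\dim\Gamma(X,\cO_X)_d=g_x$ by Grothendieck--Serre duality on the exceptional divisor together with Grauert--Riemenschneider vanishing on a resolution --- no Milnor-fibre Hodge theory enters the proof (the Hodge-theoretic statement, Theorem~\ref{t:steenbrink}, is a consequence, not an ingredient). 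A minor slip as well: the canonical map out of $M(f)$ lands in $\caD f^{\lambda}/\caD f^{\lambda+1}$ (one needs $f\cdot f^{\lambda}$ to be killed), not in $\caD f^{\lambda}$ itself. If you want to push your Hodge-filtration route, you would have to prove the identification of $g_x$ with that Hodge piece independently, which is roughly equivalent to redoing Lemma~\ref{l:dim-gamma} plus Steenbrink's theorem.
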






\subsubsection*{Supporting evidence for Conjecture \ref{c:mc}}

We now would like to list some supporting evidence for the conjecture.
\begin{enumerate}
\item 
We prove the conjecture in the case $f$ is quasi-homogeneous, which is
one of the main results of this article (Theorem \ref{t:mt-qh}). This generalizes
Theorem \ref{t:mt-homogen}.


\item 
There is the following analogous result in positive characteristic, which 
motivated the project: 

Suppose that $f$ is a polynomial in $n$ variables with rational
coefficients and an isolated singularity at the point $0.$ 
Let
$\widetilde X \xrightarrow{\rho} X_0$ be a resolution of the local
singularity, of exceptional fiber $Y$, as above. Consider further a
prime number $p$ large enough so that the denominators of the
coefficients of $f$ are prime to $p$ and the reduction modulo $p$ of
$\rho$ 
is a well-defined resolution
$\widetilde X_p \xrightarrow{\rho_p} (X_0)_p$ of the local singularity
$(X_0)_p.$ Denote by
$\widetilde X_{\overline{p}} \xrightarrow{\rho_{\overline{p}}}
(X_0)_{\overline{p}}$
the base-change of $\rho_p$ to an algebraic closure $k$ of
$\mathbb{F}_p$ and let $Y_{\overline{p}}$ be its exceptional
fiber. 
Then $H^{n-2}(Y_{\overline{p}},\mathcal{O}_{Y_{\overline{p}}})$ has a
natural Frobenius action and hence is a left $k[F]$-module. Here is
the relevant
genus: 

\begin{definition} 
\begin{itemize}

\item Let $V$ be a $k[F]$-module. The stable part $V_s$ of $V$ is $\cap_{r\geq1}F^r(V).$
\item The $p$-genus $g'_p$ is the dimension of the stable part of $H^{n-2}(Y_{\overline{p}},\mathcal{O}_{Y_{\overline{p}}}).$
\end{itemize} 
\end{definition}

Let $R_p$ be the ring of $k$-polynomials in $n$ variables, $\caD_p$ be
the ring of Grothendieck differential operators on the affine space
$\mathbb{A}^n_k$ 
and $f_p$ be the reduction of $f$ modulo $p.$ We have the following
result of the first author, see \cite{doi:10.1093/imrn/rny058}:

\begin{theorem} \label{theorem: char p} Let $f$ be an absolutely
  irreducible polynomial in $n\geq3$ variables with rational
  coefficients. Suppose that $f$ has a unique singularity.
  Then for all primes $p$ large enough, the length of the
  left $\caD_p$-module $H^1_{f_p}(R_p)$ is $1+ g'_p.$\end{theorem}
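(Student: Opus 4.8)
The plan is to carry out the length computation inside Lyubeznik's category of unit $F$-modules, where the characteristic-$p$ feature --- the appearance of the Frobenius-stable part, and hence the genuine dependence of $g'_p$ on $p$ --- is intrinsic; a naive reduction modulo $p$ of the characteristic-zero answer (Theorem \ref{t:mt-homogen}) cannot suffice, that answer being a fixed integer. First, for $p$ large enough $f_p$ has a unique singular point, which we may take to be the origin $x_0$, and $Z := \{f_p = 0\}$ is a smooth irreducible hypersurface on $\bA^n_k \setminus \{x_0\}$ (using that $f$ is absolutely irreducible). The module $H^1_{f_p}(R_p) = R_p[1/f_p]/R_p$ is a finitely generated unit $F$-module; Lyubeznik showed it has finite $\caD_p$-length, and by the theory of unit $F$-modules (Lyubeznik, Blickle) one may use an $F$-stable composition series, whose factors are simple unit $F$-modules. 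Restricting to $\bA^n_k \setminus \{x_0\}$, where $Z$ is smooth, $H^1_{f_p}(R_p)$ becomes the simple local-cohomology module $\mathcal{B}$ along $Z$; hence there is exactly one composition factor $L$ not supported at $x_0$, occurring with multiplicity one, while every other factor is supported at $x_0$ and is therefore isomorphic to $E := H^n_{\mathfrak m}(R_p)$, with $\mathfrak m$ the maximal ideal of $x_0$ --- the unique simple unit $F$-module supported at $x_0$ (Lyubeznik). Thus $\operatorname{length}_{\caD_p} H^1_{f_p}(R_p) = 1 + m$, with $m$ the multiplicity of $E$, and it remains to identify $m$ with $g'_p$.

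The integer $m$ measures the failure of $H^1_{f_p}(R_p)$ to be the minimal ($j_{!*}$-type) extension of $\mathcal{B}$ across $x_0$, and is thus a local invariant at $x_0$. The key point is that passing from coherent sheaves with Frobenius to unit $F$-modules kills the part on which Frobenius acts nilpotently (forming the colimit over Frobenius pullbacks annihilates it) and retains only the stable part; consequently the multiplicity $m$, computed $F$-module-theoretically via a root of $H^1_{f_p}(R_p)$ near $x_0$, is the $k$-dimension of the Frobenius-stable part $\bigcap_{r \geq 1} F^r(\,\cdot\,)$ of a local cohomology module at $x_0$. Invoking a resolution of singularities $\rho_p \colon \widetilde X \to X_0$ of the local singularity, with exceptional fibre $Y = \rho_p^{-1}(x_0)$ (available for $p \gg 0$), and the formal functions theorem (equivalently proper base change), this local cohomology --- together with its Frobenius --- is expressed through the proper $k$-scheme $Y$: the contributions away from $Y$ cancel against the smooth part of $Z$, and what survives, in degree $n-2$, is $H^{n-2}(Y_{\bar p}, \cO_{Y_{\bar p}})$ with its natural Frobenius. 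Hence $m$ equals the $k$-dimension of the stable part of $H^{n-2}(Y_{\bar p}, \cO_{Y_{\bar p}})$, which is exactly $g'_p$; equivalently, by the Artin--Schreier sequence $0 \to \bZ/p \to \cO_Y \xrightarrow{F-1} \cO_Y \to 0$, the splitting of the $p^{-1}$-linear Frobenius on the finite-dimensional $k$-vector space $H^{n-2}(Y_{\bar p}, \cO_{Y_{\bar p}})$ into stable and nilpotent parts (valid since $k$ is perfect), and Lang's theorem (so $F - 1$ is surjective on the stable part and bijective on the nilpotent part), one gets $m = \dim_{\bF_p} H^{n-2}_{\text{\'et}}(Y_{\bar p}, \bZ/p)$. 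Combining the two steps yields $\operatorname{length}_{\caD_p} H^1_{f_p}(R_p) = 1 + g'_p$.

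The crux is the middle step: identifying the point-supported multiplicity $m$ with $H^{n-2}$ of the exceptional fibre. Two points make this delicate. First, one must extract $m$ with care, since in characteristic $p$ the $\bZ/p$-étale cohomology of the non-proper varieties that occur (such as the punctured link of $x_0$ in $Z$) is typically infinite-dimensional: it is the composition-factor multiplicity, a finite quantity, and not a stalk dimension that one computes, so the argument must be run through the $F$-module structure and must isolate the proper piece --- the exceptional fibre $Y$ --- on which $\bZ/p$-cohomology is finite. Second, the resolution/formal-functions comparison must be carried out keeping precise track of the cohomological degree $n-2$ and of the Frobenius action, verifying that every contribution not coming from $Y$ cancels. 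By contrast, the input that the reduction mod $p$ of a resolution is again a resolution for $p \gg 0$, and the stability of the remaining features of the setup under reduction, are routine spreading-out arguments.
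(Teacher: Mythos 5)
The paper itself does not prove Theorem \ref{theorem: char p}: it is quoted as supporting evidence from the first author's separate work \cite{bitoun2017length}, so your proposal can only be measured against the strategy of that reference, which it does resemble in outline (unit $F$-modules, an $F$-stable composition series, Artin--Schreier, resolution of singularities plus proper base change). As a proof, however, it has two genuine gaps. First, the step ``$\operatorname{length}_{\caD_p} H^1_{f_p}(R_p) = 1+m$'' conflates length in the category of unit $F$-modules with length over $\caD_p$. A composition series with simple \emph{unit $F$-module} factors only gives $\operatorname{length}_{\caD_p} \geq 1+m$ a priori, since a simple unit $F$-module could decompose further over $\caD_p$; you need the substantive theorem (Blickle, for $F$-finite regular rings over an algebraically closed field) that simple finitely generated unit $F$-modules are simple as $\caD_p$-modules, applied both to the factor $L$ and to $E=H^n_{\mathfrak m}(R_p)$, and the uniqueness of the simple unit $F$-module supported at a point likewise uses that the base field is algebraically closed. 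Your parenthetical ``(Lyubeznik, Blickle)'' gestures at this but the argument never engages with it, and without it the displayed equality for the $\caD_p$-length does not follow.

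Second, and more seriously, the identification $m = g'_p$ --- which is the actual content of the theorem --- is asserted rather than argued. The sentence claiming that ``the contributions away from $Y$ cancel against the smooth part of $Z$, and what survives, in degree $n-2$, is $H^{n-2}(Y_{\overline{p}},\cO_{Y_{\overline{p}}})$ with its natural Frobenius'' is precisely the statement to be proved, and no mechanism is given for extracting a composition-factor multiplicity from $R\rho_{p*}$ of anything: one needs, e.g., the Emerton--Kisin Riemann--Hilbert correspondence to convert the multiplicity of $E$ into the dimension of a $\bZ/p$-\'etale (co)stalk, proper base change along the proper map $\rho_p$ to identify that with $H^{n-2}_{\mathrm{\acute{e}t}}(Y_{\overline{p}},\bZ/p)$, and then the Artin--Schreier sequence together with semilinear algebra (Lang/Fitting) on the \emph{proper} scheme $Y_{\overline{p}}$ to convert this into $\dim_k$ of the stable part of coherent cohomology. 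You list these tools, but the comparison is never carried out; in particular nothing in the text excludes extra point-supported constituents arising from the failure, with $\bZ/p$-coefficients in characteristic $p$, of the characteristic-zero link computations --- your own caveat about the infinite-dimensional Artin--Schreier cohomology of the punctured neighborhood is exactly where the argument stops rather than where it is resolved, and the formal functions theorem by itself carries no perverse or $F$-module information. So the proposal is a reasonable reconstruction of the known strategy, but its middle step is a restatement of the theorem, not a proof of it.
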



\begin{remark} By \cite[Theorem 1.1]{MR2155224} the left
  $\caD_p$-module $H^1_{f_p}(R_p)$ is generated by the class of
  $\frac{1}{f_p}.$
  So in fact by Theorem \ref{theorem: char p}, for $p$ large enough
  the length of the left $\caD_p$-submodule generated by the class of
  $\frac{1}{f_p}$ is $1+ g'_p.$\end{remark}
\begin{remark} It is expected, at least in the smooth and connected
  case, that for infinitely many primes $F$ acts bijectively on
  $H^{n-2}(Y_{\overline{p}},\mathcal{O}_{Y_{\overline{p}}})$, see
  \cite[Conjecture 1.1]{MR2863367}. Thus for infinitely many primes
  one would have $g'_p=g_0.$ Hence for infinitely many primes the
  length of $H^1_{f_p}(R_p)$ would equal the length of the
  $\mathcal{D}$-submodule of $H^1_f(R)$ generated by $\frac{1}{f}.$
\end{remark}

\item 
  Another piece of evidence for our conjecture comes from
  certain $\mathcal{D}$-modules which control invariants under the
  Hamiltonian flow, see Subsection \ref{subsection: HamFlow}. We show
  that the corresponding left $\caD$-modules surject to
  $\mathcal{D} \frac{1}{f}/R.$ As a consequence we prove that the above
  conjecture is equivalent to \cite[Conjecture 3.8]{MR3283930}.  In
  fact this is how we prove the conjecture in the quasi-homogeneous
  case, since by \cite[Proposition 3.11]{MR3283930} the aforementioned
  conjecture holds in this case.

\item
  As an additional piece of evidence for the conjecture, we briefly
  consider a simple example which is not quasi-homogeneous:

  \begin{example} 
    Let $X$ be one of the $T_{p,q,r}$ singularities at the origin in
    $\bA^3$, namely $f=x^p + y^q + z^r + xyz$ where
    $p^{-1}+q^{-1}+r^{-1} < 1$.  By \cite[Example 4.13]{MR499664}, the
    only integral root of the $b$-function in this case is $-1$. This
    implies that $H^1_f(R)$ is generated by $\frac{1}{f}$ as a
    $\caD$-module.
    So here the conjecture reduces to: using the notation of
    Definition \ref{definition: reduced genus},
    the length of $H^1_f(R)$ is one more than
    $\dim H^1(Y, \mathcal{O}_Y)$. For a general isolated singularity in $\bA^n$,
    the length of the former is $1+\dim H^{n-2}(V\setminus \{0\},\bC)$
     (by Lemmas \ref{l:j*maxind} and the Verdier dual statement of \ref{l:j!-sub}), where $V$ is a contractible neighborhood of the singularity
    $\{0\}$,
so in our case we reduce to showing that
    $\dim H^{1}(V\setminus \{0\}, \bC) = \dim
    H^{1}(Y,\mathcal{O}_Y)$.
    For a general isolated singularity in $\bA^3$,
    $\dim H^{1}(V\setminus \{0\}, \bC) = 2g+b$, by
    \cite[II.(3.4)]{Dimca}, where $g$ is the sum of the genera of the
    components of $\widetilde X$ and $b$ is the first Betti number of
    the dual graph of $\widetilde X$. On the other hand,
    $\dim H^{1}(Y,\mathcal{O}_Y) = g+b$.  So the two are equal
    exactly when $g=0$.  
This is the case for 
    $T_{p,q,r}$ by 
    \cite[II.(4.5)]{Dimca}. 
    Hence we conclude that the conjecture holds for the $T_{p,q,r}$
    singularities.  

    Note that, by the above, a special case of the conjecture is the
    following statement: When $n=3$ and $f$ has an isolated
    singularity, then $-1$ is the only integral root of the
    $b$-function (i.e., $H^1_f(R)$ is generated over $\caD$ by
    $\frac{1}{f}$) if and only if the genera of the components of a
    resolution $\widetilde X$ of $X=\{f=0\}$ are all zero.
\end{example} 

\end{enumerate}

\begin{remark} We can relax the
  assumptions of the conjecture. Namely, assume only that $f$ is a
  polynomial with isolated singularities (not necessarily
  irreducible). Since $n \geq 3$, the hypersurface $X=\{f=0\}$ is
  normal, so its irreducible components equal its connected
  components.  By localization, the conjecture easily
  implies that the length of the left $\caD$-submodule of $H^1_f(R)$
  generated by $\frac{1}{f}$ is then equal to the number of components
  of $X$ plus the sum of the reduced genera of the isolated
  singularities.
\end{remark}
\begin{remark} The conjecture as well as the results of this article
  also extend to the case $n=2$, provided we replace the reduced genus
  by $\dim H^0(Y, \cO_Y)/\bC$ and the cohomology $H^0(X^\circ, \bC)$
  by the reduced cohomology $H^0(X^\circ,\bC)/\bC$.
\end{remark}


\subsection{$\caD$-modules related to the $b$-function} \label{ss:dmrbf}
Next, we consider our problem for the variant of
$\mathcal{D}\frac{1}{f}$ where $\frac{1}{f}$ is replaced by
$f^\lambda$ for an arbitrary complex number $\lambda$. We are
interested in the length of the $\mathcal{D}$-module
$\mathcal{D}f^{\lambda}/\mathcal{D}f^{\lambda+1}$.

Let us recall the definition. First of all, the variant of the
$\mathcal{D}[\frac{1}{f}]$-module $R[\frac{1}{f}]$ is the
$\mathcal{D}[\frac{1}{f}]$-module $R[\frac{1}{f}]f^{\lambda}.$ That is
the trivial line bundle on $\Spec(R[\frac{1}{f}])$ with generator
$f^\lambda$, on which a derivation $\partial$ acts by
$\partial \cdot f^\lambda= \lambda \partial(f)f^{-1}f^\lambda.$

\begin{definition}
  The left $\mathcal{D}$-module $\mathcal{D}f^{\lambda}$ is the left
  $\mathcal{D}$-submodule of $R[\frac{1}{f}]f^{\lambda}$ generated by
  $f^{\lambda}$, and $\mathcal{D}f^{\lambda+1}$ is the left
  $\mathcal{D}$-submodule generated by
  $f \cdot f^{\lambda}$.
\end{definition}

This definition is reminiscent of that of the $b$-function of $f.$
Namely the $b$-function is the minimal polynomial of the action of $s$
on the left $\mathcal{D}[s]$-module
$\mathcal{D}[s]f^s/\mathcal{D}[s]f^{s+1}$, where $s$ is a 
variable commuting with $\mathcal{D}.$ More precisely, we consider
$\mathcal{D}[\frac{1}{f}][s]f^s$, the free $R[\frac{1}{f}][s]$-module
of rank one with generator $f^s$, on which derivations $\partial$ in
$\mathcal{D}$ act by $\partial \cdot f^s= s \partial(f)f^{-1}f^s.$

\begin{definition}The left $\mathcal{D}[s]$-modules
  $\mathcal{D}[s]f^s$ and $\mathcal{D}[s]f^{s+1}$ are the left
  $\mathcal{D}[s]$-submodules of $\mathcal{D}[\frac{1}{f}][s]f^s$
  generated by $f^s$ and $f\cdot f^s$, respectively.\end{definition}

From the definitions it is clear that there is a
surjection
$$\mathcal{D}[s]f^s/\mathcal{D}[s]f^{s+1} \xrightarrow{p_\lambda}
\mathcal{D}f^{\lambda}/\mathcal{D}f^{\lambda+1},$$
sending $s$ to $\lambda$ and $f^s$ to $f^\lambda.$ Thus the quotient
$\mathcal{D}f^{\lambda}/\mathcal{D}f^{\lambda+1}$ certainly vanishes
if $\lambda$ is not a root of the $b$-function of $f.$ One may wonder
about the converse, namely:
\begin{question}\label{q:nontriv}
  Suppose that $\lambda$ is a root of the $b$-function of $f.$ Is the
  quotient $\mathcal{D}f^{\lambda}/\mathcal{D}f^{\lambda+1}$
  nontrivial?
\end{question}
In this paper we compute the length of
$\mathcal{D}f^{\lambda}/\mathcal{D}f^{\lambda+1}$ when $f$ is
quasi-homogeneous with an isolated singularity.  As a consequence we
answer Question \ref{q:nontriv} affirmatively in this
case. 
\begin{remark}
  Note, on the other hand, that Saito shows in \cite[Example 4.2]{Sai-dmgrphf}
  that for all sufficiently small $\epsilon>0$ and for
  $f = x^{14}+y^{14}-x^6y^6 +z^5 -\epsilon x^9 y^2z,
  \mathcal{D}f^{\lambda}/\mathcal{D}f^{\lambda+1}$
  vanishes at $\lambda=-\frac{13}{30}$ even though $-\frac{13}{30}$ is
  a root of the $b$-function of $f.$ Thus the answer to Question
  \ref{q:nontriv} is negative in general, even for an isolated
  singularity. 
\end{remark}

Let
$\caD[s] f^s / \caD[s] f^{s+1}|_{s=\lambda} := \caD[s] f^s / \caD[s]
f^{s+1} \otimes_{\bC[s]} \bC[s]/(s-\lambda)$
be the restriction to $s=\lambda.$ Our result is the
following: 
\begin{theorem} \label{t:lneq-1} Let $f$ be a complex polynomial in $n\geq3$ variables, $\lambda\in \mathbb{C}$ and let  
\[
\caD[s] f^s / \caD[s] f^{s+1}|_{s=\lambda} 
 \xrightarrow{q_{\lambda}} \mathcal{D}f^{\lambda}/\mathcal{D}f^{\lambda+1}
\]
be the canonical surjective morphism. 
Suppose that $f$ is quasi-homogeneous with an isolated singularity at $0.$
\begin{enumerate}
\item \label{item: lambda} If $\lambda \neq -1$, then the canonical
  morphism $q_{\lambda}$ is an isomorphism.
\item \label{item: -1} If $\lambda=-1$, then the kernel of
  $q_{\lambda}$ is isomorphic to
  $\delta_0^{\dim H^{n-2}(X^\circ, \mathbb{C})- g_0}$, where
  $\delta_0$ is the irreducible $\caD$-module supported at $0$ and
  $X^\circ$ is the zero-locus of $f$ minus its
  singularity. \end{enumerate}
\end{theorem}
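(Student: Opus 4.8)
The plan is to identify $\ker q_\lambda$, which (since $q_\lambda$ is surjective) is the only issue. First I would show that $\ker q_\lambda$ is supported at the origin. Away from $\{f=0\}$ both the source and the target of $q_\lambda$ vanish, since there $f$ is invertible and so $f^\lambda$ and $f\cdot f^\lambda$ (resp.\ $f^s$ and $f\cdot f^s$) generate the same submodule. On $X^\circ$, where $\{f=0\}$ is smooth (quasi-homogeneity with isolated singularity makes $\{f=0\}$ a cone, smooth away from $0$), one chooses a local coordinate equal to $f$ and reduces to the one-variable model $\caD[s]x^s/\caD[s]x^{s+1}$, which is the delta module of the hyperplane when $\lambda=-1$ and is $0$ otherwise, with $q_\lambda$ restricting to the identity. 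Hence $\ker q_\lambda$ is a holonomic $\caD$-module supported at $\{0\}$, so $\ker q_\lambda\cong\delta_0^{m_\lambda}$ for some $m_\lambda\ge 0$; moreover $m_\lambda$ is the difference of the multiplicities of $\delta_0$ in the source and the target of $q_\lambda$, their remaining composition factors agreeing (being the intermediate extension of the same rank-one $\caD$-module on $X^\circ$).

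For part (\ref{item: lambda}) I would exploit quasi-homogeneity to make the grading by the Euler operator $\Eu$ (normalized so $\Eu(f)=f$, hence $\Eu\cdot f^s=sf^s$) explicit; then $\caD[s]f^s=\caD f^s$, $\caD[s]f^{s+1}=\caD f^{s+1}$, and $N:=\caD[s]f^s/\caD[s]f^{s+1}$ is graded with $s$ acting compatibly. The point to establish is that for $\lambda\ne-1$ the source $N|_{s=\lambda}$ and the target $\caD f^{\lambda}/\caD f^{\lambda+1}$ have the same length; both are supported at $0$, and their common length is governed by the part of the top cohomology of the Milnor fibre (with its Hodge filtration) attached to the monodromy eigenvalue $e^{-2\pi i\lambda}$, exactly as in the proof of Theorem \ref{t:mt-qh}. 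Concretely I would compare the de Rham complexes at $0$ (equivalently, the characteristic cycles): since the two modules agree over $\bC^n\setminus\{0\}$ and the only extra delta contribution in the local model occurs at $\lambda=-1$, the pointwise multiplicities match and $m_\lambda=0$.

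For part (\ref{item: -1}) I would write
\[
\operatorname{length}(\ker q_{-1})=\operatorname{length}\bigl(\caD[s]f^s/\caD[s]f^{s+1}\big|_{s=-1}\bigr)-\operatorname{length}\bigl(\caD\tfrac1f/R\bigr),
\]
where by Theorem \ref{t:mt-qh} the subtrahend is $1+g_0$. It then suffices to prove that $\caD[s]f^s/\caD[s]f^{s+1}|_{s=-1}$ has length $1+\dim H^{n-2}(X^\circ,\bC)$, i.e.\ the same length as $H^1_f(R)$ (whose length is $1+\dim H^{n-2}(X^\circ,\bC)$ by Lemmas \ref{l:j*maxind} and \ref{l:j!-sub} via Verdier duality); this yields $m_{-1}=\dim H^{n-2}(X^\circ,\bC)-g_0$. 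Here I would bring in the Etingof--Schedler $\caD$-modules of Hamiltonian-flow invariants: one identifies, up to modules supported at $0$, the module $\caD[s]f^s/\caD[s]f^{s+1}|_{s=-1}$ with the Hamiltonian-flow module whose length in the quasi-homogeneous case is computed by \cite[Proposition 3.11]{MR3283930}.

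The main obstacle is exactly this last length computation. The difficulty is that restriction to $s=-1$ does not commute with the quotient defining $N$: a $\operatorname{Tor}_1^{\bC[s]}$-term appears, equal to the $(s+1)$-torsion of $N$, and this torsion is large (already in the one-variable local model $N$ is entirely $(s+1)$-torsion). Pinning down how many copies of $\delta_0$ survive at $s=-1$ is what forces one to use the full Hodge-theoretic description of the top cohomology of the Milnor fibre together with the comparison with the Hamiltonian-flow modules, rather than a direct $\caD$-module manipulation; the rest of the argument is the routine homological bookkeeping of composition factors sketched above.
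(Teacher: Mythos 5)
Your reduction of the problem is sound as bookkeeping: away from $\{f=0\}$ both modules vanish, at smooth points of $X$ the local model shows $q_\lambda$ is an isomorphism, so $\ker q_\lambda \cong \delta_0^{m_\lambda}$ and everything comes down to counting multiplicities of $\delta_0$. But the counting itself is where your argument has a genuine gap, and it is precisely the content of the theorem. For part (\ref{item: lambda}), your claim that ``the pointwise multiplicities match'' because the two modules agree over $\bC^n\setminus\{0\}$ and one can compare de Rham complexes or characteristic cycles is not a valid argument: for $\lambda\neq -1$ both source and target are entirely supported at the origin, so agreement away from $0$ is vacuous, and a surjection between direct sums of $\delta_0$'s can certainly fail to be injective. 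Indeed Saito's example (\cite[Example 4.2]{Sai-dmgrphf}, quoted in Subsection \ref{ss:dmrbf}) shows that without quasi-homogeneity the target can even vanish while the source does not, so no local or characteristic-cycle comparison can close this step; quasi-homogeneity must enter through a global length computation. For part (\ref{item: -1}), your formula for $\operatorname{length}(\ker q_{-1})$ is consistent with the theorem, but it hinges on showing that $\caD[s]f^s/\caD[s]f^{s+1}|_{s=-1}$ has length $1+\dim H^{n-2}(X^\circ,\bC)$, and you only gesture at this (``identify up to modules supported at $0$ with the Hamiltonian-flow module''); an identification only up to modules supported at $0$ does not determine the number of $\delta_0$'s, which is exactly the quantity at stake. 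You yourself flag this as ``the main obstacle,'' so the proposal stops short of a proof.

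For comparison, the paper closes the gap by a global count rather than a local one. Theorem \ref{t:comp} identifies the full module $\caD[s]f^s/\caD[s]f^{s+1}$ with $M(f)$; the Etingof--Schedler structure theorem then exhibits its maximal quotient supported at $0$ as $\delta^{\mu}$ and gives $\db_{\{0\}}(\caD[s]f^s/\caD[s]f^{s+1})\cong\delta^{\dim H^{n-2}(X^\circ,\bC)}$ via Lemma \ref{l:j!-sub}. On the other side, Proposition \ref{prop: Df nearby cycles} together with the semisimplicity of the Milnor monodromy shows that $\bigoplus_\lambda \caD f^\lambda/\caD f^{\lambda+1}$ has length at least $1+\mu$, while Corollary \ref{cor: length} gives length $1+\mu+\dim H^{n-2}(X^\circ,\bC)$ for the source; combining these yields Theorem \ref{theorem: ker=db}, namely $\ker p=\ker p_{-1}=\db_{\{0\}}\cong\delta^{\dim H^{n-2}(X^\circ,\bC)}$, which is what simultaneously proves $m_\lambda=0$ for $\lambda\neq-1$ and controls $\lambda=-1$. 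Finally, the passage from the generalized eigenspace to the fiber at $s=-1$ uses the nontrivial input \cite[Theorem 6.18]{MR1943036} that $s$ acts semisimply on $(s+1)\caD[s]f^s/\caD[s]f^{s+1}$ with $((s+1)\caD[s]f^s/\caD[s]f^{s+1})_{(-1)}\cong\delta^{g_0}$; some such ingredient (or an equivalent computation of $\ker\pi$) is also missing from your sketch. If you supply these global length statements, your outline becomes essentially the paper's proof; without them it does not yet establish either part.
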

For $\lambda \neq -1$, the length of $\caD[s] f^s / \caD[s]
f^{s+1}|_{s=\lambda}$ was computed in
\cite[\S 6.4]{MR1943036}.  
To express the answer we need some
notation.
Since $f$ is quasi-homogeneous, we may and do endow the polynomial algebra $R=\mathbb{C}[x_1, \dots, x_n]$ with a nonnegative grading such that $x_i$ is homogeneous of degree $m_i\geq1$ for all $i$, and $f$ is a homogeneous element of $R.$ We denote the degree of a homogeneous element $g$ of $R$ by $|g|.$
Let $J :=R / (\frac{\partial f}{\partial x_1},
\ldots, \frac{\partial f}{\partial x_n})$ be the Jacobi ring, which has finite complex dimension
equal to the Milnor number $\mu$ of the isolated singularity. The grading on $R$ induces a grading on the Jacobi ring and the above length is equal to the dimension of a certain homogeneous component. 
Using the isomorphism of Theorem \ref{t:lneq-1}, we have:
\begin{corollary} \label{c:lneq-1} If $\lambda \neq -1$, then the
  length of $\mathcal{D}f^{\lambda}/\mathcal{D}f^{\lambda+1}$ is
  $\dim_{\mathbb{C}} J_{-|f|\lambda - \sum_i m_i}$.
\end{corollary}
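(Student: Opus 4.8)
The plan is to derive Corollary \ref{c:lneq-1} by combining the isomorphism of Theorem \ref{t:lneq-1}\eqref{item: lambda} with the known computation of the length of the restricted $b$-function module. First I would invoke Theorem \ref{t:lneq-1}\eqref{item: lambda}: since $\lambda \neq -1$ and $f$ is quasi-homogeneous with an isolated singularity, the canonical surjection $q_\lambda$ is an isomorphism, so it suffices to compute the length of $\caD[s] f^s / \caD[s] f^{s+1}|_{s=\lambda}$. For this I would quote \cite[\S 6.4]{MR1943036}, which expresses this length in terms of a homogeneous component of the Jacobi ring $J = R/(\partial_1 f, \ldots, \partial_n f)$.

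The main work is then to match up the grading conventions so that the cited result reads as $\dim_{\bC} J_{-|f|\lambda - \sum_i m_i}$. Here I would recall the quasi-homogeneous setup: $R$ is graded with $|x_i| = m_i \geq 1$ and $f$ homogeneous, and the action of the Euler operator $\Eu = \sum_i m_i x_i \partial_i$ on $f^\lambda$ is by the scalar $\lambda |f|$, while $\Eu$ acts on a monomial of degree $d$ in $R$ by $d$ (modulo the shift coming from the twist by $f^\lambda$, which contributes $-\sum_i m_i$ from the volume form $dx_1 \wedge \cdots \wedge dx_n$ having weight $\sum_i m_i$). Concretely, one identifies $\caD[s] f^s / \caD[s] f^{s+1}|_{s=\lambda}$ with a graded module on which $s = \lambda$, and the statement of \cite[\S 6.4]{MR1943036} picks out the graded piece of $J$ in the degree where the Euler grading matches: the relevant eigenvalue of the (appropriately normalized) Euler operator is $-|f|\lambda - \sum_i m_i$, giving the component $J_{-|f|\lambda - \sum_i m_i}$. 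I would spell out just enough of the weight bookkeeping — the $-\sum_i m_i$ twist and the sign of $\lambda$ — to make the translation unambiguous, without redoing the computation of \cite{MR1943036} itself.

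The step I expect to be the main obstacle is precisely this normalization check: \cite{MR1943036} works with its own conventions (possibly grading $f$ to have a fixed degree, or using a different sign for $s$, or a different placement of the twist by the canonical bundle), and one must verify that, under the paper's conventions with $|x_i| = m_i$ and $|f|$ as defined here, the homogeneous component that appears is exactly $J_{-|f|\lambda - \sum_i m_i}$ rather than, say, $J_{|f|(\lambda+1) - \sum_i m_i}$ or a reindexing thereof. A useful sanity check, which I would include, is the case $\lambda$ just above a root of $b_f$ near $-1$: the dimension should degenerate consistently with the $\lambda = -1$ behavior in part \eqref{item: -1}, and in the homogeneous case $m_i = 1$, $|f| = d$ one should recover $\dim J_{-d\lambda - n}$, which for $\lambda = -1 + \frac{k}{d}$ type values matches the classical description of the Hodge/pole filtration on the Milnor fiber cohomology. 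Once the indexing is pinned down, the corollary is immediate.
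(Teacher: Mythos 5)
Your proposal is correct and follows the paper's own route exactly: the paper likewise deduces the corollary by combining the isomorphism $q_\lambda$ of Theorem \ref{t:lneq-1}(\ref{item: lambda}) with the computation of the length of $\caD[s] f^s / \caD[s] f^{s+1}|_{s=\lambda}$ from \cite[\S 6.4]{MR1943036}, expressed as the dimension of the graded piece $J_{-|f|\lambda - \sum_i m_i}$ of the Jacobi ring. The grading/normalization bookkeeping you flag as the main obstacle is treated as implicit in the citation in the paper, so your version is if anything slightly more explicit.
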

\begin{remark}
  Note that, since $f$ is quasi-homogeneous, $s$ acts semisimply on
  $(s+1)\caD[s] f^s / \caD[s] f^{s+1}$, so in the case $\lambda \neq -1$, the restriction $\caD[s] f^s / \caD[s] f^{s+1}|_{s=\lambda}$ in the theorem can be identified with the generalized eigenspace of  $\caD[s] f^s / \caD[s] f^{s+1}$
under $s$ of eigenvalue $\lambda$. This is not true for $\lambda=-1$, since $s$ does not act semisimply on $\caD[s] f^s / \caD[s] f^{s+1}$ itself in general. The advantage of using generalized eigenspaces is that it gives a direct sum decomposition of $\caD[s] f^s / \caD[s] f^{s+1}$. See Remark \ref{r:ge-ds} for 
more details on the summands.
\end{remark}

\begin{remark} 
  The dimensions in Corollary \ref{c:lneq-1} are of a Hodge-theoretic
  nature, see Theorem \ref{t:steenbrink}
  below. 
\end{remark}

\begin{remark}
  It would be nice to generalize Conjecture \ref{c:mc} to the length
  of $\caD f^{\lambda} / \caD f^{\lambda+1}$, for at least some roots
  $\lambda$ of the $b$-function. \end{remark}


\subsection{$\caD$-modules related to Hamiltonian
  flow} \label{subsection: HamFlow}

Our main technique of proof is to compare the various $\caD$-modules
above with certain modules attached to the Hamiltonian flow.



Let us first say a few words about D-modules on
singular varieties. Recall, following Kashiwara, that if $X$ is
a singular affine variety equipped with an embedding $X \to Y$ into
a smooth affine variety, one can define the category of ``D-modules on $X$'' to be the category of right $\mathcal{D}_{Y}$-modules set-theoretically supported on $X$.  This category does not depend on the choice of embedding up to canonical equivalence. It is \emph{not} in general equivalent to the (full) category of modules over any ring. In order to avoid conflict with our distinct notation $\mathcal{D} = \mathcal{D}(\bA^n)$ for the ring of polynomial differential operators in $n$ variables, we will use the roman font ``D-modules'' when referring to this category.


We consider the functor $\Gamma_{\mathrm{D}}(X,-)$ from the category of 
D-modules on $X$ 
to vector spaces, given by taking the global sections
scheme-theoretically supported on $X$. This functor does not depend on
the choice of embedding up to canonical isomorphism. Moreover, for
every D-module $M$ on $X$, there is a canonical action on
$\Gamma_{\rmD}(X,M)$ of the Lie algebra of global vector fields
on $X$, coming from the action on $M$ of vector fields on $Y$, which
again does not depend on the choice of embedding.




Let $\mathfrak{g}$ be a Lie algebra and suppose that $X$ is a singular
affine variety equipped with an action $\theta: \mathfrak{g} \to T_X$.
The D-module on $X$ we are interested in is the 
module 
$M(X,\theta)$ representing the functor of
$\mathfrak{g}$-invariant global sections. Namely, this D-module
is such that, for any
 D-module $N$ on $X$, we have a functorial isomorphism
$\Hom
(M(X,\theta), N)
\cong\Gamma_{\mathcal{D}}(X,N)^{\mathfrak{g}}.$
The D-module $M(X,\theta)$ can be described explicitly as follows. 

Let $X \to Y$ be a closed embedding of $X$ into a smooth affine
variety $Y$ with ideal $I_X.$ There is a standard D-module $D_X$ on
$X$, which corresponds to the right $\mathcal{D}_Y$-module
$I_X \caD_Y \setminus \caD_Y$ supported on $X$. This does not depend
on the choice of the embedding $X \to Y$ via Kashiwara's
equivalence. Moreover, \emph{left} multiplication in $\caD_Y$ induces
an action on $D_X$ of the vector fields on $X$ by D-module
endomorphisms; via $\theta$ we obtain an action of $\mathfrak{g}$.
Neither $D_X$ nor the above actions depend on the choice of embedding.
The D-module $M(X,\theta)$ on $X$ representing the invariants
under the Hamiltonian flow is then
$\mathfrak{g} \cdot D_X \setminus D_X$.

We now recall from \cite[\S 3.4]{ES-dmlv} the example of interest to us.
Let $f$ be a complex polynomial in $n\geq3$ variables and $X := \{f=0\} \subseteq \bA^n = Y.$ 
To every differential form $\alpha$ of degree 
$(n-3)$ on $\bA^n$, we associate the vector field
$\xi_\alpha := (\partial_1 \wedge \cdots \wedge \partial_n) (d \alpha
\wedge df)$.  It is parallel to $X$, i.e. $\xi_\alpha(f)=0$, and therefore defines a vector
field $\xi_\alpha|_X$ on $X.$ 
\begin{definition} The Lie algebra $\mathfrak{g}(f)$ is the vector space $\Omega^{n-3}(\bA^n)$ of $(n-3)$-forms on $\bA^n$, endowed with the Lie bracket $$\{\alpha,
\beta\} := L_{\xi_\alpha}(\beta) 
,$$ for all $\alpha, \beta \in \Omega^{n-3}(\bA^n)$, with
$L_\xi$ the Lie derivative. We define the Lie homomorphism $\theta: \mathfrak{g}(f) \to T_X$ 
by $\theta(\alpha)=\xi_\alpha|_{X}$, 
for all $\alpha\in \Omega^{n-3}(\bA^n).$\end{definition}

Note that we also have a Lie homomorphism $\widetilde \theta: \mathfrak{g}(f) \to T_{\bA^n}$ defined by $\widetilde
\theta(\alpha)=\xi_\alpha$, for all $(n-3)$-forms $\alpha$ on $\bA^n.$

We then form the D-module $M(X,\theta)$ on $X$.  Let us explicitly write the corresponding left $\caD$-module (on $\bA^n$).
\begin{definition} Let $i_*M(X,\theta)$ be the right $\caD$-module on $\bA^n$ corresponding to $M(X,\theta).$ We define $M(f)$ to be the left $\caD$-module on $\bA^n$ obtained by applying to $i_*M(X,\theta)$ the standard equivalence between right and left $\caD$-modules on $\bA^n.$
\end{definition}
It follows directly from the definitions
that
\begin{equation}\label{e:Mf-eqn}
  M(f) = \caD / (\caD f + \caD \widetilde \theta(\Omega^{n-3}(\bA^n))).
\end{equation}
This left $\caD_{\bA^n}$-module
represents the functor of Hamiltonian-invariant sections supported on
$X$:
\begin{equation}\label{e:Mf-pr}
  \Hom_{\caD}(M(f), N) = \Gamma_X(\bA^n,N)^{\Omega^{n-3}(\bA^n)},
\end{equation}
where $N$ is any left $\caD$-module
and $\Gamma_X(\bA^n,N) = \{n \in N \mid
fn=0\}$ are the elements of $N$ which are scheme-theoretically
supported on $X$.  


The left $\mathcal{D}$-module $M(f)$ and $\mathcal{D}[s]f^s/\mathcal{D}[s]f^{s+1}$ have striking similarities. They are both supported on $X$ and their
D-module restrictions to 
the smooth locus $X^\circ$ of $X$ are both
isomorphic to the structure sheaf $\mathcal{O}_{X^\circ}$ viewed as a left
$\caD_{X^\circ}$-module
 (for $M(f)$, this
follows from the corresponding statement for $M(X,\theta)$ in \cite[Example 2.37]{ES-dmlv}).
Moreover, one easily deduces from the presentation (\ref{e:Mf-eqn}) that there is a unique morphism of left $\mathcal{D}$-modules $M(f) \to \caD[s] f^s / \caD[s] f^{s+1}$ sending $1$ to $f^s.$

\begin{definition} \label{defi: canonical morphism} The canonical
  morphism $M(f) \xrightarrow{\gamma_f} \caD[s] f^s / \caD[s] f^{s+1}$
  is the unique morphism of left $\mathcal{D}$-modules such that
  $\gamma_f(1)=f^s.$
\end{definition}
Our next main result clarifies the situation when $f$ is quasi-homogeneous with an isolated singularity:

\begin{theorem} \label{t:comp} Let $f$ be a complex polynomial in $n\geq3$ variables. 
If $f$ is quasi-homogeneous with an isolated singularity,
  then $\gamma_f$ is an isomorphism.
\end{theorem}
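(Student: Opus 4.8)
The plan is to prove Theorem \ref{t:comp} by exploiting the fact that both $M(f)$ and $\caD[s]f^s/\caD[s]f^{s+1}$ are holonomic left $\caD$-modules supported on $X$, which restrict on the smooth locus $X^\circ$ to $\cO_{X^\circ}$, and then to argue that the canonical morphism $\gamma_f$ is an isomorphism by controlling what happens over the singular point $0$. Since $\gamma_f(1) = f^s$ generates the image, and $f^s$ generates $\caD[s]f^s/\caD[s]f^{s+1}$, the map $\gamma_f$ is automatically surjective; the whole content is the injectivity. Because both modules are holonomic with the same generic rank-one restriction to $X^\circ$, the kernel and cokernel of $\gamma_f$ are supported on the singular locus $\{0\}$, hence (being holonomic modules supported at a point) are direct sums of copies of $\delta_0$. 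So it suffices to show $\ker\gamma_f$ has no $\delta_0$ summand, equivalently that $\Hom_\caD(\delta_0, \ker \gamma_f) = 0$, or — what I would find more tractable — that $M(f)$ itself has no submodule isomorphic to $\delta_0$ and moreover embeds into something with the right structure.

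The cleanest route, and the one I expect the authors take, is to use the quasi-homogeneous grading. Under the $\bC^\times$-action coming from the quasi-homogeneity, all the modules in sight — $M(f)$, $\caD[s]f^s/\caD[s]f^{s+1}$, and the morphism $\gamma_f$ — are equivariant (the Euler vector field $\Eu = \sum_i m_i x_i \partial_i$ acts, and on $f^s$ it acts by $|f|\,s$ plus a scalar shift). So I would decompose everything into generalized $\Eu$-eigenspaces. The key point is that $\delta_0 = \caD/\caD(x_1,\dots,x_n)$, with its natural grading, has $\Eu$ acting on its lowest weight vector by $-\sum_i m_i$, and more generally its weights are $-\sum_i m_i - (\text{sum of } m_i \text{ over a multiset})$; in particular $\delta_0$ is "bounded above" in $\Eu$-weight. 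Thus if $\ker\gamma_f$ contained a copy of $\delta_0$, there would be a nonzero element of $M(f)$, annihilated by all of $x_1,\dots,x_n$, of appropriate $\Eu$-weight, mapping to zero in $\caD[s]f^s/\caD[s]f^{s+1}$. I would then analyze the submodule $\Gamma_0(M(f)) = \{m \in M(f) : x_i m = 0 \ \forall i\}$ of sections supported at the origin and show it maps injectively into $\caD[s]f^s/\caD[s]f^{s+1}$ — or better, show $\Gamma_0(M(f)) = 0$ entirely, i.e. $M(f)$ has no sections supported at the point, which by holonomic duality would be related to $M(f)$ having no quotient supported at $0$.

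Concretely, from the presentation $M(f) = \caD/(\caD f + \caD\widetilde\theta(\Omega^{n-3}(\bA^n)))$, a class $[P]$ lies in $\Gamma_0(M(f))$ iff $x_i P \in \caD f + \caD\widetilde\theta(\Omega^{n-3})$ for all $i$. I would reduce to understanding the relations: the Hamiltonian vector fields $\xi_\alpha$ for $\alpha$ ranging over $(n-3)$-forms span, after restriction to $X$, the module of vector fields tangent to $X$, and in the isolated-singularity quasi-homogeneous setting one has a Koszul-type resolution of the Jacobian ideal (the partials $\partial f/\partial x_i$ form a regular sequence since the singularity is isolated). This lets me identify $M(f)$, or at least its associated graded / characteristic variety, very explicitly, and compare term-by-term with the known description of $\caD[s]f^s/\caD[s]f^{s+1}$ from \cite[\S 6.4]{MR1943036} in terms of graded pieces of the Jacobi ring $J$. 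The main obstacle, I expect, is precisely this explicit matching: showing that the relations defining $M(f)$ are "enough" — that $\caD f + \caD\widetilde\theta(\Omega^{n-3})$ already contains everything it needs to contain so that no spurious point-supported piece survives in the kernel. This is where the isolated-singularity hypothesis (regularity of the sequence of partials, finiteness of $\mu = \dim J$) and the quasi-homogeneity (existence of the Euler field expressing $f$ in terms of its partials, and the grading bookkeeping) both get used essentially; without quasi-homogeneity the comparison genuinely fails, as the later results about $\lambda = -1$ and Saito's example show, so the proof cannot avoid leaning on the grading in a serious way.
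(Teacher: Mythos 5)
The central step of your plan is not secure, and the ``better'' version of it is actually false. You propose to kill $\ker\gamma_f$ by showing either that $\Gamma_0(M(f))=\{m\in M(f): x_im=0\ \forall i\}$ maps injectively to $\caD[s]f^s/\caD[s]f^{s+1}$, or, preferably, that $M(f)$ has no sections (no submodule $\delta_0$) supported at the origin. The latter cannot work: by \cite[Theorem 2.7]{MR3283930} there is an exact sequence $0\to i_*H^0j^X_!\cO_{X^\circ}\to M(f)\to\delta^{\mu}\to 0$, and by Lemma \ref{l:j!-sub} the submodule $i_*H^0j^X_!\cO_{X^\circ}$ already contains $\delta_0^{\dim H^{n-2}(X^\circ,\bC)}$; e.g.\ for the cone over an elliptic curve $M(f)$ contains $\delta_0^2$ (and, after the fact, the Corollary following Theorem \ref{t:comp} shows $\caD[s]f^s/\caD[s]f^{s+1}\cong\delta^{\mu-g}\oplus N$ with a further point-supported submodule inside $N$). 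So the statement $\Gamma_0(M(f))=0$ is false whenever $H^{n-2}(X^\circ,\bC)\neq 0$, and injectivity of $\gamma_f$ is genuinely a statement that the (large) point-supported parts on the two sides match up --- which your weight/grading observation about $\delta_0$ being bounded above in $\Eu$-weight does not address, since both the kernel you want to exclude and the legitimate point-supported submodules have exactly the same weight behaviour. The remaining fallback, the ``explicit matching'' of the presentation \eqref{e:Mf-eqn} against the description of $\caD[s]f^s/\caD[s]f^{s+1}$ via Koszul relations and graded pieces of the Jacobi ring, is precisely the content of the theorem, and you flag it yourself as the unresolved ``main obstacle''; as written there is no argument there. (A smaller point: surjectivity of $\gamma_f$ is not automatic --- $f^s$ generates the target over $\caD[s]$, and one needs the Euler field of the quasi-homogeneous grading, $vf^s=sf^s$, to get generation over $\caD$.)

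For comparison, the paper avoids any analysis of point-supported sections of $M(f)$: it proves surjectivity via the Euler field and then shows injectivity by a pure length count. The length of $\caD[s]f^s/\caD[s]f^{s+1}$ equals that of the nearby cycles $\Psi_f(\cO_U)$ by \cite[Proposition 5.6]{MR833194}, and the latter is computed to be $1+\mu+\dim H^{n-2}(X^\circ,\bC)$ using Lemma \ref{l:j*maxind}, Verdier duality, the semisimplicity-free identification of the stalk of $\Psi_f(\cO_U)$ at $0$ with the reduced Milnor cohomology, and Lemma \ref{l:j!-sub}; the same number is obtained for $M(f)$ from the exact sequence of \cite[Theorem 2.7]{MR3283930} and Lemma \ref{l:j!-sub}. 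A surjection between holonomic modules of equal finite length is an isomorphism. If you want to salvage your approach, you would have to replace the vanishing claim by an actual comparison of the point-supported socles on both sides, which in effect amounts to redoing this length computation.
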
 

The theorem is proved in Section \ref{s:pf-t:comp}.
When $f$ is quasi-homogeneous with an isolated singularity, Theorem
\ref{t:comp} allows us to use the results of \cite{MR3283930} on the
structure of
$M(f)$ 
to study $\caD \frac{1}{f} / R$ and the canonical surjection
$\caD[s] f^s / \caD[s] f^{s+1} \to \caD \frac{1}{f} /
R.$ It also allows us to describe the precise structure of the $\caD$-module
$\caD[s] f^s / \caD[s] f^{s+1}$. Let $\mu_x$ be the Milnor number of the singularity at $x$.
\begin{corollary} \label{c:str-ds}
If $f$ is quasi-homogeneous with an isolated
  singularity at $x$, then there is a decomposition
  $\caD[s]f^s / \caD[s]f^{s+1} \cong \delta^{\mu_x - g_x} \oplus N$, where
  $N$ is an indecomposable $\caD$-module admitting a filtration
  $0 \subseteq N_1 \subseteq N_2 \subseteq N_3= N$ such that
  $N_3/N_2 \cong \delta^{g_x}$, $N_2 / N_1 \cong \IC(X)$, and
  $N_1 \cong \delta^{\dim H^{n-2}(X^\circ,\bC)}$.
\end{corollary}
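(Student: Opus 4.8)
The plan is to combine Theorem~\ref{t:comp}, which identifies $M(f)$ with $\caD[s]f^s/\caD[s]f^{s+1}$, with the structural results on $M(f)$ from \cite{MR3283930} that underlie the proof of Theorem~\ref{t:mt-qh}. Since $f$ is quasi-homogeneous with an isolated singularity, the $\caD$-module $M(f)$ is known to be a successive extension of $\delta := \delta_0$ (the irreducible $\caD$-module supported at $x$, which we may assume is the origin), $\IC(X)$, and further copies of $\delta$, corresponding to the three pieces of the natural filtration. Concretely, $M(f)$ has a composition series in which the only simple subquotients are $\IC(X)$ (once) and $\delta$ (with multiplicity $\mu_x - 1$, matching the Milnor number count since the generic-stalk length is that of $\cO_{X^\circ}$ extended, cf.\ Corollary~\ref{c:lneq-1} and the semisimplicity remark). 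The task is to package this into the asserted direct-sum decomposition $\delta^{\mu_x - g_x} \oplus N$ with $N$ indecomposable of length $g_x + 1 + \dim H^{n-2}(X^\circ,\bC)$.

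First I would recall from \cite{MR3283930} (the source already invoked for the quasi-homogeneous case of Conjecture~\ref{c:mc}) the precise description of the socle and top of $M(f)$ and of the submodule generated by the cyclic vector: the image of $1 \in M(f)$ under $\gamma_f$ followed by $p_{-1}\circ q_{-1}$ generates $\caD\frac1f/R$, which by Theorem~\ref{t:mt-homogen}/\ref{t:mt-qh} has length $1+g_x$. Via the isomorphism $\gamma_f$ this submodule is $N$; one shows it is the \emph{unique} maximal submodule-with-simple-top of the relevant shape, or more directly that $M(f)/N$ is a semisimple $\delta$-isotypic module. Then the key step is a splitting argument: because $\delta$ is injective (and projective) in the category of $\caD$-modules supported at the point, the quotient $M(f) \onto M(f)/N \cong \delta^{k}$ splits, giving $M(f) \cong \delta^k \oplus N$; counting lengths forces $k = \mu_x - g_x$. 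The filtration $0 \subseteq N_1 \subseteq N_2 \subseteq N_3 = N$ is then the filtration on $\caD\frac1f/R$ (equivalently on the image of $M(f)$) whose graded pieces are read off from the Verdier-dual computations: $N_1$ is the maximal $\delta$-isotypic submodule, of rank $\dim H^{n-2}(X^\circ,\bC)$ by Lemma~\ref{l:j!-sub}; $N_2/N_1 \cong \IC(X)$; and $N_3/N_2 \cong \delta^{g_x}$, with $g_x = \dim H^{n-2}(X^\circ,\bC) - (\dim H^{n-2}(X^\circ,\bC)-g_x)$ accounting for exactly the copies of $\delta$ not split off. Part~\ref{item: -1} of Theorem~\ref{t:lneq-1} supplies the matching kernel $\delta^{\dim H^{n-2}(X^\circ,\bC)-g_x}$ of $q_{-1}$, so the bookkeeping is internally consistent.

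The indecomposability of $N$ is where I expect to spend real effort. Length-counting and the explicit graded pieces are routine once \cite{MR3283930} is cited, but ruling out a further splitting $N \cong \delta^j \oplus N'$ requires knowing that $N$ has simple socle (namely $\IC(X)$, or a single $\delta$ at the bottom) \emph{or} simple top — otherwise a $\delta$ summand could detach. The cleanest route is to identify $N$ with the image of the cyclic generator $f^s \mapsto \frac1f$ in $\caD\frac1f/R$: a cyclic module has a unique maximal submodule, hence simple top, and one checks that top is not $\delta$ (it is the top of $H^1_f(R)$'s relevant constituent, governed by the $b$-function having $-1$ as its largest integral root, via \cite[Proposition 6.2]{MR0430304}). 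A cyclic module with non-$\delta$ simple top admits no $\delta$ direct summand, giving indecomposability. I would therefore structure the proof as: (i) transport the \cite{MR3283930} structure through $\gamma_f^{-1}$; (ii) split off the maximal $\delta$-isotypic quotient using injectivity of $\delta$; (iii) identify the complement $N$ as the cyclic submodule $\caD f^\lambda/\caD f^{\lambda+1}$-generator's image, deducing simple non-$\delta$ top and hence indecomposability; (iv) exhibit the three-step filtration by intersecting with the weight/Hodge filtration already used in the proof of Corollary~\ref{c:lneq-1}, reading off the graded pieces from Lemmas~\ref{l:j*maxind} and~\ref{l:j!-sub}. The main obstacle is step~(iii): making rigorous that the top of $N$ is genuinely $\IC(X)$-related rather than $\delta$, which ultimately rests on the fact that $\frac1f$ generates the IC part of $H^1_f(R)$ and not merely a point module.
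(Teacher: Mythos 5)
Your overall frame---transport the structure of $M(f)$ through the isomorphism $\gamma_f$ of Theorem \ref{t:comp}---is exactly what the paper does; its proof is essentially the citation of \cite[Theorem 2.7, Theorem 3.1, Proposition 3.2]{MR3283930}, which already contain the decomposition $M(X,\theta) \cong \delta^{\mu_x-g_x} \oplus M_{\max}$ with $M_{\max}$ indecomposable and carrying the stated three-step filtration, together with Lemma \ref{l:dim-gamma} (to identify the top constant as $g_x$) and Lemma \ref{l:j!-sub} (to identify the bottom as $\dim H^{n-2}(X^\circ,\bC)$). The problem is that you try to re-derive this structure, and both key steps fail. The splitting step is not formal: $\delta$ is injective and projective only in the category of modules supported at the point (vector spaces, by Kashiwara), and $M(f)$ does not live there. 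In the ambient category of holonomic $\caD$-modules a surjection onto $\delta^k$ need not split; if it did for formal reasons, the same reasoning applied to $N \onto N_3/N_2 \cong \delta^{g_x}$ would detach those copies too and contradict the very indecomposability you want (whenever $g_x>0$). That exactly $\mu_x-g_x$ copies of $\delta$ split off while $g_x$ copies remain glued above $\IC(X)$ is the nontrivial content of \cite[Theorem 3.1]{MR3283930}, proved there with the Hodge-theoretic input that resurfaces in Lemma \ref{l:dim-gamma}; it cannot be replaced by an Ext-vanishing or injectivity argument.

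The indecomposability step is also broken, for two concrete reasons. First, ``cyclic $\Rightarrow$ unique maximal submodule $\Rightarrow$ simple top'' is false for $\caD$-modules: $\caD[s]f^s/\caD[s]f^{s+1}$ is itself cyclic (quasi-homogeneity gives $vf^s = sf^s$, so $f^s$ generates over $\caD$), yet it is precisely the module being decomposed. Second, your identification of $N$ with $\caD\frac{1}{f}/R$ cannot be correct: that module is a \emph{quotient} of $\caD[s]f^s/\caD[s]f^{s+1}$ (via $p_{-1}$), not a submodule, and has length $1+g_x$ by Theorem \ref{t:mt-qh}, whereas the asserted $N$ has length $1+g_x+\dim H^{n-2}(X^\circ,\bC)$; what is true is that $N$ surjects onto $\caD\frac{1}{f}/R$ with kernel $N_1$ (compare Proposition \ref{prop: alpha} and Theorem \ref{theorem: ker=db}). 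Relatedly, the composition multiplicity of $\delta$ in $M(f)$ is $\mu_x+\dim H^{n-2}(X^\circ,\bC)$, not $\mu_x-1$: the total length is $1+\mu_x+\dim H^{n-2}(X^\circ,\bC)$ by Proposition \ref{prop: length M(f)}. To repair the argument, replace your steps (ii) and (iii) by the citation of \cite[Theorem 3.1]{MR3283930} for both the splitting and the indecomposability of the summand, and read off the graded pieces from \cite[Theorem 2.7, Proposition 3.2]{MR3283930} together with Lemmas \ref{l:dim-gamma} and \ref{l:j!-sub}, as the paper does.
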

\begin{proof} This follows from the theorem together with
  \cite[Theorem 2.7, Theorem 3.1, Proposition 3.2]{MR3283930}, using Lemma
  \ref{l:dim-gamma} to obtain the constant $g_x$ and
  \cite[(2.9)]{MR3283930} (or Lemma
  \ref{l:j!-sub} below) to obtain $\dim H^{n-2}(X^\circ,\bC)$.
\end{proof}


\begin{remark}
  It is interesting to ask when the canonical morphism remains
  surjective or injective in the inhomogeneous case.  One simple
  observation is the following: suppose $\lambda$ is a root of the
  $b$-function such that $\lambda-m$ is not a root for any $m \geq
  1$. In this case, it is well-known that the quotient $\caD f^\lambda
  / \caD f^{\lambda +1 }$ is nonzero (\cite[Proposition
  6.2]{MR0430304}).
  Similarly, one can prove that the image of the canonical map
  $M(f) \to \caD[s] f^s / \caD[s] f^{s+1}$ contains the entire
  generalized eigenspace $(\caD[s] f^s / \caD[s] f^{s+1})_{(\lambda)}
  := \{m \in \caD[s] f^s / \caD[s] f^{s+1} \mid (s-\lambda)^N m = 0, N
  \gg 0\}$ under $s$ of eigenvalue $\lambda$. 
\end{remark}
\begin{remark}\label{r:ge-ds}
  Using Corollary \ref{c:str-ds}, we can describe the generalized
  eigenspaces of $\caD[s] f^s / \caD[s] f^{s+1}$ under multiplication
  by $s$.  At eigenvalue $\lambda \neq -1$, the generalized eigenspace
  is just $\delta^{m_\lambda}$ for $m_\lambda$ given in Corollary
  \ref{c:lneq-1}. At eigenvalue $\lambda=-1$, it follows from Theorems
  \ref{t:mt-qh} and \ref{theorem: ker=db} below that the generalized
  eigenspace is the indecomposable D-module $N$ appearing in Corollary
  \ref{c:str-ds}.
\end{remark}
\begin{remark} The definition of $M(f)$ can be extended to the case
  when $\bA^n$ is replaced by a Calabi--Yau variety, i.e., a smooth
  variety with a nonvanishing algebraic volume
  form: we simply replace $\partial_1 \wedge \cdots \wedge \partial_n$ by
  the inverse of the volume form (see \cite[\S 3.4]{ES-dmlv}).
Theorem \ref{t:comp} generalizes directly to that setting. 
  \end{remark}

\subsection{Relation to work of M.~Saito}\label{s:Saito}
After independently obtaining our results, we were informed by
M.~Saito of similar results in \cite{Sai-dmgrphf} obtained by
different methods.  In particular, he describes $\caD f^\lambda / \caD
f^{\lambda+1}$ in terms of the $V$-filtration on $\caD[s] f^s$.
Combining \cite[Corollary 1]{Sai-dmgrphf} with Corollary
\ref{c:lneq-1} above, we recover the following result, which is
essentially \cite[Theorem 1]{Ste-ifqhs} (summing the statement below
over all $\beta$ yields \cite[Theorem 1]{Ste-ifqhs} except without the
statement about the weight filtration).  Let $F_{f,0}$ be the Milnor
fiber of $f$ around $0$.  The Milnor cohomology $H^{n-1}(F_{f,0},\bC)$
has a natural monodromy action of $\pi_1(\bC\setminus\{0\}) \cong
\bZ$, and we let $H^{n-1}(F_{f,0},\bC)_{(\lambda)}$ denote the
generalized eigenspace of eigenvalue $\lambda$ with respect to the
standard generator.  Let $F$ be the Hodge filtration.
\begin{theorem}\label{t:steenbrink} 
Let
$0 < \beta \leq 1$, 
$j \in \bZ$, and
  $\lambda := -j-\beta$. 
If $\lambda \neq -1$, then the vector space
  $\gr_F^{n-j-1} H^{n-1}(F_{f,0},\bC)_{(\exp(-2\pi i \beta))}$ is nonzero if
  and only if 
  $\lambda$ is a root of the $b$-function of
  $f$. Moreover, for all $\lambda$, we have
\begin{equation}
\gr_F^{n-j-1}
  H^{n-1}(F_{f,0},\bC)_{(\exp(-2\pi i \beta))}\cong J_{-|f|\lambda - \sum_i m_i}.
\end{equation}
\end{theorem}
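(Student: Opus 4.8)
The plan is to combine the two inputs already in hand: Corollary \ref{c:lneq-1}, which computes the length of $\caD f^\lambda/\caD f^{\lambda+1}$ as $\dim_\bC J_{-|f|\lambda - \sum_i m_i}$ for $\lambda \neq -1$, and Saito's \cite[Corollary 1]{Sai-dmgrphf}, which identifies $\caD f^\lambda/\caD f^{\lambda+1}$ in terms of the $V$-filtration on $\caD[s]f^s$ and hence relates its length to a graded piece $\gr_G^j H^{n-1}(F_{f,0},\bC)_{(\exp(-2\pi i\beta))}$ of the Milnor cohomology with its Hodge (= $G$) filtration. First I would spell out precisely the statement of \cite[Corollary 1]{Sai-dmgrphf} in the notation of this paper, writing $\lambda = -j-\beta$ with $0 \le \beta < 1$ and $j \in \bZ$ so that $\exp(-2\pi i\lambda) = \exp(-2\pi i\beta)$; the point is that the monodromy eigenvalue is governed by $\beta$ and the filtration index by $j$. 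This should directly give $\gr_G^j H^{n-1}(F_{f,0},\bC)_{(\exp(-2\pi i\beta))} \cong \caD f^\lambda/\caD f^{\lambda+1}$ as a vector space (up to the length count), and then Corollary \ref{c:lneq-1} substitutes $\dim_\bC J_{-|f|\lambda-\sum_i m_i}$ on the right, yielding the displayed isomorphism for $\lambda \neq -1$.

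For the "nonzero iff $\lambda$ is a root of the $b$-function" clause (again $\lambda \neq -1$), I would argue as follows. We always have the surjection $p_\lambda \colon \caD[s]f^s/\caD[s]f^{s+1} \onto \caD f^\lambda/\caD f^{\lambda+1}$, so the quotient vanishes whenever $\lambda$ is not a root of the $b$-function. For the converse under the quasi-homogeneous hypothesis, Theorem \ref{t:lneq-1}\eqref{item: lambda} says $q_\lambda$ is an isomorphism for $\lambda \neq -1$, so $\caD f^\lambda/\caD f^{\lambda+1}$ coincides with the restriction $\caD[s]f^s/\caD[s]f^{s+1}|_{s=\lambda}$; this is nonzero exactly when $\lambda$ is a root of the $b$-function, since in the quasi-homogeneous case the $b$-function is (up to the factor $s+1$) the product of $(s - \lambda_i)$ over the relevant exponents and $s$ acts semisimply on $(s+1)\caD[s]f^s/\caD[s]f^{s+1}$ — here I would cite the structure recalled after Corollary \ref{c:lneq-1} (and \cite[\S 6.4]{MR1943036}). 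Equivalently, and perhaps more cleanly, nonvanishing of $J_{-|f|\lambda-\sum_i m_i}$ is precisely the condition that $-|f|\lambda - \sum_i m_i$ lie in the set of degrees appearing in the Jacobi ring, which is exactly the characterization of the roots $\lambda \neq -1$ of the $b$-function of a quasi-homogeneous isolated singularity.

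Finally, for the "for all $\lambda$" strengthening I would handle $\lambda = -1$ (i.e. $\beta = 0$, $j = 1$) separately. On the cohomology side, $\gr_G^1 H^{n-1}(F_{f,0},\bC)_{(1)}$ is the relevant Hodge piece of the unipotent monodromy part; on the algebraic side, $J_{-|f|\lambda - \sum_i m_i} = J_{|f| - \sum_i m_i}$, and by Steenbrink's theory (or \cite[Theorem 1]{Ste-ifqhs} directly) these agree, the value $|f| - \sum_i m_i$ being the top Jacobian degree shifted appropriately. I expect the main obstacle to be bookkeeping of the normalizations: matching Saito's filtration $G$ (with its shift relative to the Hodge filtration), the sign and offset conventions relating $\lambda$, $j$, $\beta$, and the monodromy eigenvalue $\exp(-2\pi i\beta)$, and the precise grading convention on $J$ relative to the degrees $|f|$ and $m_i$ — getting the index $-|f|\lambda - \sum_i m_i$ to come out correctly on both sides simultaneously. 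Once the conventions are pinned down, the proof is essentially the concatenation of the two cited results, with the $\lambda = -1$ case cross-checked against \cite[Theorem 1]{Ste-ifqhs}.
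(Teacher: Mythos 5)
Your proposal is essentially the paper's own argument: the paper obtains Theorem \ref{t:steenbrink} precisely by combining Saito's \cite[Corollary 1]{Sai-dmgrphf} with Corollary \ref{c:lneq-1} (noting that any two of Corollary \ref{c:lneq-1}, Theorem \ref{t:steenbrink}, and Saito's result imply the third), and your treatment of the root-of-the-$b$-function clause via Theorem \ref{t:lneq-1} and the restriction $\caD[s]f^s/\caD[s]f^{s+1}|_{s=\lambda}$ is consistent with that. The only caveat is that quoting \cite[Theorem 1]{Ste-ifqhs} itself for the $\lambda=-1$ case is circular if the aim is to \emph{recover} Steenbrink's theorem; that case should instead be extracted from Saito's result together with the paper's $\lambda=-1$ analysis (Theorem \ref{t:lneq-1}, part (\ref{item: -1}), and Lemma \ref{l:dim-gamma}), but this is bookkeeping rather than a different method.
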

In other words, the Jacobi ring is the associated graded vector space
of the Milnor cohomology with respect to the Hodge filtration (up to
shifting degrees and taking into account the monodromy
eigenvalues).  

One can in fact show that any two of the results: (1) Corollary \ref{c:lneq-1},
(2) Steenbrink's Theorem \ref{t:steenbrink}, and
(3) Saito's \cite[Corollary 1]{Sai-dmgrphf}, imply the third.




\subsection{Notations} \label{subs: notations} For the rest of the paper and unless otherwise mentioned, we will use the following notations: 
$R$ is the ring of complex polynomials in $n \geq 3$ variables and $f$ is a nonconstant element of $R.$ We let $$\{f=0\}=: X \xrightarrow{i} \bA^n \xleftarrow{j} U:= \{f\not=0\}$$ denote the canonical inclusions. Moreover we note $X^\circ \xrightarrow{j^X} X$ the canonical open embedding, where $X^\circ$ is the smooth locus of $X.$ Let $\delta_x$ denote the irreducible D-module supported at a point $x \in X$ and let $(-)^\vee$ denote the dual vector space.  
Finally, for a morphism $S \xrightarrow{\alpha} T$ of varieties, $(\alpha_*, \alpha^*, \alpha_!, \alpha^!, \alpha_{!*})$ denote the usual functors between the bounded derived categories of holonomic D-modules.


\section{Proof of Conjecture \ref{c:mc} in the quasi-homogeneous case} \label{s:pf-c:mc}


The goal of this section is to prove the following result:
\begin{theorem} \label{t:mt-qh} Let $f$ be a quasi-homogeneous complex
  polynomial in $n\geq3$ variables with an isolated singularity.
Then the length of the left $\mathcal{D}$-submodule of
  $H^1_f(R)$ generated by $\frac{1}{f}$ is $1 + g$, with $g$ the reduced genus of the
  singularity.  
\end{theorem}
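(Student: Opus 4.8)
The plan is to reduce Theorem \ref{t:mt-qh} to the combination of Theorem \ref{t:comp} (comparing $M(f)$ with $\caD[s]f^s/\caD[s]f^{s+1}$) and the results of \cite{MR3283930} on the structure of $M(f)$. First I would set up the diagram of canonical surjections
\[
M(f) \xrightarrow{\gamma_f} \caD[s]f^s/\caD[s]f^{s+1} \xrightarrow{p_{-1}} \caD f^{-1}/\caD f^0 = \caD \tfrac1f/R,
\]
where the last identification uses that $R$ is simple and $f\cdot f^{-1}$ generates a submodule of $R[\frac1f]f^{-1}\cong R[\frac1f]$ whose image in $H^1_f(R)=R[\frac1f]/R$ is $\caD\frac1f/R$. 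By Theorem \ref{t:comp}, since $f$ is quasi-homogeneous with an isolated singularity, $\gamma_f$ is an isomorphism, so it suffices to compute the length of the image of $p_{-1}\circ\gamma_f$, i.e. of the $\caD$-submodule of $H^1_f(R)$ generated by $\frac1f$, as a quotient of $M(f)$.

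Next I would invoke the structural description of $M(f)$ for a quasi-homogeneous isolated singularity from \cite[Theorem 2.7, Theorem 3.1, Proposition 3.2]{MR3283930} (as summarized in the Corollary after Theorem \ref{t:comp}): $M(f)\cong \delta_0^{\mu-g}\oplus N$, where $N$ is indecomposable with a three-step filtration $0\subseteq N_1\subseteq N_2\subseteq N_3=N$, $N_1\cong\delta_0^{\dim H^{n-2}(X^\circ,\bC)}$, $N_2/N_1\cong\IC(X)$, and $N_3/N_2\cong\delta_0^{g}$. The key point is to identify which subquotient of $M(f)$ the generator $\frac1f$ of $\caD\frac1f/R$ corresponds to. Restricting to the smooth locus $X^\circ$, both $M(f)$ and $\caD\frac1f/R$ restrict to $\cO_{X^\circ}$ (as noted in the text, via \cite[Example 2.37]{ES-dmlv} and Lemma \ref{l:j*maxind}-type statements), so the composite map $M(f)\to \caD\frac1f/R$ is an isomorphism on $X^\circ$, hence its image must contain the middle extension $\IC(X)=j^X_{!*}\cO_{X^\circ}$ — equivalently, the image is $N/N_1$ possibly extended by part of the top $\delta_0^g$, and cannot involve the $\delta_0^{\mu-g}$ summand (which is supported entirely at $0$, while $\frac1f$ generates a module faithfully supported on $X$). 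The surjection $M(f)\onto \caD\frac1f/R$ therefore factors through $N\onto N/N_1$, and I would argue the image is exactly $N/N_1$: it surjects onto it (since $\gamma_f$ and then the further quotient are surjective and the $\delta_0^{\mu-g}$ part dies), and conversely $\caD\frac1f/R$ has no submodule supported at $0$ other than what is forced, because $H^1_f(R)$ has no such submodule killed inside $\caD\frac1f$... — more cleanly, I would use that $\caD\frac1f/R$ is generated by one element and its socle/cosocle structure is constrained by the known composition factors of $H^1_f(R)$ (which are $\IC(X)$ and copies of $\delta_0$, by Lemmas \ref{l:j*maxind} and \ref{l:j!-sub}).

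Granting that the $\caD$-submodule of $H^1_f(R)$ generated by $\frac1f$ is isomorphic to $N/N_1$, its length is $\len(\IC(X)) + \len(\delta_0^{g}) = 1 + g$, since $\IC(X)$ is irreducible of length $1$ and $\delta_0^g$ has length $g$. This gives the theorem. I expect the main obstacle to be the middle step: precisely pinning down the image of the generator $\frac1f$ inside the filtration of $M(f)\cong N\oplus\delta_0^{\mu-g}$ — that is, showing the image is exactly $N/N_1$ and not something larger (still containing part of $N_1$, which would be absurd since $N_1$ is a submodule not a quotient) or, more delicately, ruling out that the map to $\caD\frac1f/R$ kills part of the top $\delta_0^{g}$ as well. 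Handling this cleanly will require comparing with the dual picture: $j_{!}$ versus $j_{*}$ of $\cO_{X^\circ}$ pushed to $\bA^n$, identifying $\caD\frac1f/R$ as the image of the canonical map $H^{n-2}$-related morphism $i_*j^X_! \cO_{X^\circ}[\ldots]\to i_*j^X_*\cO_{X^\circ}[\ldots]$ twisted appropriately (Lemmas \ref{l:j*maxind} and \ref{l:j!-sub} and Verdier duality), and matching the two filtrations. Once the identification $\caD\frac1f/R\cong N/N_1$ is in hand, the length count is immediate.
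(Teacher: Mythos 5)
Your overall strategy---transfer the problem to the structure of $M(f)$ via the surjection onto $\caD\frac1f/R$---is indeed how the paper proceeds (and the detour through Theorem \ref{t:comp} is unnecessary: the surjection $\alpha_f\colon M(f)\to\caD\frac1f/R$, $1\mapsto\frac1f$, exists directly from the presentation \eqref{e:Mf-eqn}, because $\xi_\alpha(f)=0$ and $f\cdot\frac1f\in R$; Theorem \ref{t:comp} is needed for Theorem \ref{t:lneq-1}, not here). However, there is a genuine gap exactly where the reduced genus is supposed to enter. What \cite[Theorem 2.7, Theorem 3.1, Proposition 3.2]{MR3283930} actually give is that the top quotient of the indecomposable summand $N$ is $\delta_0^{\dim\Gamma(X,\cO_X)_d}$ with $d=|f|$ the weight of $f$---not $\delta_0^{g}$. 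The statement $N_3/N_2\cong\delta_0^{g}$ that you quote is the Corollary of \emph{this} paper, whose proof already invokes Lemma \ref{l:dim-gamma}, i.e.\ the identity $\dim\Gamma(X,\cO_X)_d=\dim H^{n-2}(Y,\cO_Y)$ for a resolution with exceptional fiber $Y$. That identity is the essential geometric input of the proof of Theorem \ref{t:mt-qh}: it is proved via Grothendieck--Serre duality on the normal crossings fiber $Y$ (replacing $H^{n-2}(Y,\cO_Y)$ by logarithmic volume forms), Grauert--Riemenschneider vanishing, and the determination of which homogeneous volume forms on $X^\circ$ extend to $\widetilde X$ (degree $>d$) respectively have only log poles along $Y$ (degree $\geq d$). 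Your proposal never states or proves this identity, so as written it only yields that the length is $1+\dim\Gamma(X,\cO_X)_d$; converting this to $1+g$ by citing the paper's Corollary is circular in a blind proof.

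The step you flag as the ``main obstacle''---pinning down the image of $\alpha_f$---is genuine but is resolved more simply than by the dual-filtration matching you sketch; it is Proposition \ref{prop: alpha} of the paper. Since $H^1_f(R)$ has no submodule supported at the origin (Lemma \ref{l:j*maxind}, from $\Ext^{\ell}(\delta_0,R)=0$ for $\ell\leq 1$ when $n\geq3$), the kernel of $\alpha_f$ contains the maximal submodule $K\subseteq M(f)$ supported at $0$ (hence all of $\delta_0^{\mu-g}$ and $N_1$); and the kernel equals $K$ because $M(f)/K$ has $i_*j^X_{!*}\cO_{X^\circ}=\IC(X)$ as its unique minimal submodule, so any strictly larger kernel would kill $\IC(X)$ and force $\caD\frac1f/R$ to be supported at $0$, which it is not. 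Note that your remark that the $\delta_0^{\mu-g}$ summand ``cannot be involved because $\frac1f$ generates a module faithfully supported on $X$'' is not by itself an argument---a quotient not supported at $0$ can still have composition factors $\delta_0$; it is the absence of submodules of $H^1_f(R)$ supported at $0$ that does the work. With Proposition \ref{prop: alpha} in place, the count $\len(\IC(X))+g=1+g$ goes through as you say, but only after the identity $\dim\Gamma(X,\cO_X)_d=g$ from the first paragraph is supplied.
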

We will reduce the proof to a statement about $M(f)$, of which much is known thanks to \cite{MR3283930}.
Let us begin with a couple of lemmas. 

\begin{lemma} \label{lemma: appendix} Let $V \xrightarrow{j'} Y$ be an open embedding of not necessarily smooth algebraic varieties and let $Z$ be the complement of $V$ in $Y.$ Let $M$ and $N$ be D-modules on $V$ and $Y$, respectively.  Suppose that there is an isomorphism $j'^*N \xrightarrow{\psi} M$, and let $N \xrightarrow{\phi} j'_*M$ be the adjoint morphism. Then $N \xrightarrow{H^0(\phi)} H^0j'_*M$ is an isomorphism if and only if $\Hom(K, N)= \Ext^1(K, N)= 0$, for all D-modules 
$K$ on $Y$  which are supported on $Z.$ 
\end{lemma}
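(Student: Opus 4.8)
The plan is to analyze the cone of $\phi \colon N \to j'_*M$ and identify it with a complex supported on $Z$, then use a standard adjunction/recollement argument. First I would recall the recollement attached to the decomposition $Y = V \sqcup Z$: for any D-module complex $N$ on $Y$ there is a distinguished triangle $i'_*i'^!N \to N \to j'_*j'^*N \xrightarrow{+1}$, where $i' \colon Z \to Y$ is the closed embedding. Since the isomorphism $\psi \colon j'^*N \xrightarrow{\sim} M$ identifies $j'_*j'^*N$ with $j'_*M$ compatibly with the canonical map, the third vertex $j'_*j'^*N$ is exactly $j'_*M$ and the map $N \to j'_*j'^*N$ is (up to this identification) $\phi$. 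Hence the cone of $\phi$ is $i'_*i'^!N[1]$, a complex set-theoretically supported on $Z$.

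Next I would extract the statement about $H^0$ from this triangle. Since $N$ is a genuine D-module (in degree $0$) and $j'_*$ is left exact (here $j'_*$ on D-modules is the derived pushforward, whose zeroth cohomology $H^0 j'_*$ is the honest pushforward), the long exact cohomology sequence of the triangle $i'_*i'^!N \to N \xrightarrow{\phi} j'_*M \xrightarrow{+1}$ reads, in the relevant range,
\[
0 \to \mathcal{H}^0(i'_*i'^!N) \to N \xrightarrow{H^0(\phi)} H^0 j'_* M \to \mathcal{H}^1(i'_*i'^!N) \to \mathcal{H}^1(j'_* M) \to \cdots,
\]
together with $\mathcal{H}^{<0}(i'_*i'^!N) \cong \mathcal{H}^{<0}(N) = 0$. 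Therefore $H^0(\phi)$ is an isomorphism if and only if $\mathcal{H}^0(i'_*i'^!N) = 0$ and the connecting map $H^0 j'_*M \to \mathcal{H}^1(i'_*i'^!N)$ is zero; a cleaner equivalent is that $H^0(\phi)$ is an isomorphism if and only if $\mathcal{H}^0(i'_*i'^!N) = 0$ and $\mathcal{H}^1(i'_*i'^!N)$ injects into $\mathcal{H}^1(j'_*M)$ — but since $\mathcal{H}^1(i'_*i'^!N) \to \mathcal{H}^1 N = 0$ is also part of a triangle, one sees directly that $H^0(\phi)$ is an isomorphism precisely when $\tau_{\leq 1} i'_*i'^!N$ is concentrated in degree $1$ with $\mathcal{H}^1 = 0$ contributing, i.e. when $\mathcal{H}^0 i'^!N = \mathcal{H}^1 i'^!N = 0$ (using that $i'_*$ is exact for the closed embedding). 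The last step is to translate the vanishing of $\mathcal{H}^0 i'^!N$ and $\mathcal{H}^1 i'^!N$ into the stated $\Hom$ and $\Ext^1$ vanishing. For this I would use the adjunction $\Hom_{D(Z)}(\mathcal{G}, i'^!N) = \Hom_{D(Y)}(i'_*\mathcal{G}, N)$ for D-modules $\mathcal{G}$ on $Z$: every D-module $K$ on $Y$ supported on $Z$ is of the form $i'_*\mathcal{G}$ (Kashiwara's equivalence), so $\Hom(K,N) = H^0\Hom_{D(Z)}(\mathcal{G}, i'^!N)$ and $\Ext^1(K,N) = H^1\Hom_{D(Z)}(\mathcal{G}, i'^!N)$; these vanish for all such $K$ if and only if $\tau_{\leq 1} i'^! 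N = 0$ in degrees $\leq 1$, i.e. $\mathcal{H}^0 i'^!N = \mathcal{H}^1 i'^!N = 0$ (choosing $\mathcal{G}$ to detect each cohomology sheaf — e.g. $\mathcal{G}$ running over all D-modules on $Z$ detects $\mathcal{H}^0$, and then a second application handles $\mathcal{H}^1$ once $\mathcal{H}^0$ is known to vanish).

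The main subtlety I anticipate is the bookkeeping with the non-smooth ambient varieties: $Y$ and $Z$ need not be smooth, so all of the above must be carried out in Kashiwara's sense (D-modules on $Y$ = right $\mathcal{D}$-modules on an ambient smooth variety supported on $Y$), and one must check that the recollement triangle, the left-exactness of $j'_*$, and the adjunction $(i'_*, i'^!)$ all remain valid there — which they do, since these are defined via the ambient smooth embedding and are compatible with it. A second small point is orienting the "if and only if" correctly: the direction "vanishing $\Rightarrow$ iso" is immediate from the exact sequence, while "iso $\Rightarrow$ vanishing" requires noting that if $H^0(\phi)$ is an isomorphism then the triangle forces $i'_*i'^!N$ to have no cohomology in degrees $0$ and $1$ (its $\mathcal{H}^0$ embeds in $\ker H^0(\phi) = 0$, and its $\mathcal{H}^1$ embeds in $\mathrm{coker}\, H^0(\phi) = 0$ composed into $\mathcal{H}^1 j'_*M$, but more simply its $\mathcal{H}^1$ maps to $\mathcal{H}^1 N = 0$ with kernel the image of $\mathrm{coker}\,H^0(\phi)=0$, so it vanishes). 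Everything else is routine homological algebra in the derived category of holonomic D-modules.
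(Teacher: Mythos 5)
Your argument is correct, but it is organized differently from the paper's. You prove both implications at once by identifying the cone of $\phi$ via the local cohomology (recollement) triangle $i'_*i'^!N \to N \to j'_*j'^*N \xrightarrow{+1}$, so that the four-term exact sequence $0 \to \mathcal{H}^0(i'_*i'^!N) \to N \to H^0j'_*M \to \mathcal{H}^1(i'_*i'^!N) \to 0$ reduces the lemma to the vanishing of $\mathcal{H}^0 i'^!N$ and $\mathcal{H}^1 i'^!N$, which you then match with the $\Hom$/$\Ext^1$ condition via Kashiwara's equivalence and the adjunction $(i'_*, i'^!)$. The paper never introduces $i'^!$: for the ``only if'' direction it runs the hypercohomology spectral sequence $\Ext^i(K, H^k j'_*M) \Rightarrow \Ext^{i+k}(K, j'_*M)$ together with $\Ext^\bullet(K, j'_*M) \cong \Ext^\bullet(j'^*K, M) = 0$, and for the ``if'' direction it argues abelian-categorically: $\ker H^0(\phi)$ and $\coker H^0(\phi)$ are supported on $Z$, the $\Hom$ hypothesis kills the kernel, the $\Ext^1$ hypothesis splits the resulting short exact sequence, and adjointness kills the cokernel. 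Your route is more uniform and makes the obstruction visible as two cohomology sheaves of $i'^!N$, at the cost of invoking the full six-functor/recollement formalism in the singular (Kashiwara) setting, which you correctly flag and which does hold there; the paper's route is more elementary, using only adjunction and a standard spectral sequence. One cosmetic point: the sentence in your second paragraph about the connecting map being zero versus $\mathcal{H}^1(i'_*i'^!N)$ vanishing is garbled as written (since the next term $\mathcal{H}^1(N)$ vanishes, the connecting map is automatically surjective, so ``zero connecting map'' already forces $\mathcal{H}^1(i'_*i'^!N)=0$); your final paragraph states the correct resolution, so this is a matter of exposition rather than a gap.
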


\begin{proof} For the only if, let $K$ be a D-module supported on $Z$.
  Note that there is a first-quadrant spectral sequence whose $(i,k)$
  entry is $\Ext^i(K, H^kj'_* M)$, with differential of degree
  $(2,-1)$, which converges to $\Ext^{i+k} (K, j'_* M)$. Since on
  subsequent pages the differential is of degree $(2+m,-1-m)$ for
  $m > 0$, we see that $\Ext^i(K,H^0j'_* M)$ survives on all pages for
  $i \leq 1$ and hence to $\Ext^{i}(K,j'_* M)$.  On the other hand,
  applying adjunction,
  $\Ext^{i}(K,j'_*M) \cong \Ext^{i}(j'^* K, M) = 0$.  Therefore
  $\Ext^i(K,H^0j'_* M) = 0$ for $i \leq 1$.  As a result,
  $N \cong H^0 j'_* M$ implies that $\Hom(K,N)=\Ext^1(K,N)=0$.

  Let us prove the if part. Assume therefore that $\Hom(K,N)=\Ext^1(K,N)=0$ for all $K$ supported on $Z$.
Observe that $\ker H^0(\phi)=0.$ Indeed
  $\ker H^0(\phi)$ and $\coker H^0(\phi)$ are supported on $Z$ since
  $\psi$ is an isomorphism but then $\Hom(\ker H^0(\phi), N)=0$, by
  hypothesis on $N.$
We thus have a short exact sequence: 
$$0\to N \xrightarrow{H^0(\phi)} H^0j_*M \to \coker H^0(\phi) \to 0.$$ It has to split since $\Ext^1(\coker H^0(\phi), N)= 0$ by hypothesis. But by adjunction (or the only if part), $\Hom(\coker H^0(\phi), H^0j_*M)= 0$, 
so $\coker H^0(\phi)=0.$
\end{proof}

Recall
  notations from \ref{subs: notations}. 
\begin{lemma} \label{l:j*maxind} 
Suppose that $f$ has an
  isolated singularity at the origin. Then the left
  $\mathcal{D}$-module $H^1_f(R)$ has no submodules supported at the
  origin.  Moreover, we have an isomorphism
  $H^1_f(R) \cong H^0(i \circ j^X)_*
  \cO_{X^\circ}.$ 
\end{lemma}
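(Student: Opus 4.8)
The plan is to prove the two assertions of Lemma~\ref{l:j*maxind} together, using Lemma~\ref{lemma: appendix} as the main engine. First I would set up the geometric picture: write $j = i \circ j^X : X^\circ \hookrightarrow \bA^n$ for the locally closed embedding of the smooth locus of $X$, so that $\cO_{X^\circ}$ is a (holonomic, in fact regular) D-module on $X^\circ$, and observe that $H^1_f(R) = H^1_{\{f=0\}}(R)$ is a holonomic left $\caD$-module whose restriction to the open set $U = \bA^n \setminus X$ vanishes and whose restriction to $X^\circ$ is $\cO_{X^\circ}$ (this is the standard local computation of local cohomology along a smooth hypersurface). The only D-modules that can appear as sub- or quotient modules with support strictly smaller than $X$ are supported at the singular point $0$, since $X$ has an isolated singularity; such D-modules are direct sums of copies of $\delta_0$.

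Next I would reduce the statement ``$H^1_f(R) \cong H^0 j_* \cO_{X^\circ}$ and has no submodule supported at $0$'' to a vanishing statement via Lemma~\ref{lemma: appendix}, applied with $Y = \bA^n$, $V = \bA^n \setminus \{0\}$, $j'$ the open embedding, $Z = \{0\}$, $M = $ the restriction of $H^1_f(R)$ to $V$ (equivalently $H^1$ of local cohomology computed on $V$, which is $j^X_* \cO_{X^\circ}$ restricted appropriately — note that $X^\circ$ is closed in $V$ since the singularity is isolated), and $N = H^1_f(R)$ itself. The hypothesis $j'^* N \cong M$ is then immediate, and the adjunction morphism $N \to j'_* M$ composed with $H^0$ is the map I want to show is an isomorphism. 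By Lemma~\ref{lemma: appendix} this is equivalent to $\Hom(\delta_0, H^1_f(R)) = \Ext^1(\delta_0, H^1_f(R)) = 0$. The vanishing of $\Hom(\delta_0, H^1_f(R))$ is precisely ``no submodules supported at the origin,'' so the two halves of the lemma are genuinely the same statement, and it remains to prove both $\Ext$-vanishings.

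For the vanishing of $\Hom$ and $\Ext^1$ against $\delta_0$, I would compute using the local cohomology / adjunction description of $\Ext^\bullet(\delta_0, -)$: for a holonomic $\caD$-module $N$ on $\bA^n$, $\Ext^\bullet_{\caD}(\delta_0, N)$ computes $i_0^! N$ (up to a shift), where $i_0 : \{0\} \hookrightarrow \bA^n$. So I need the costalk $i_0^! H^1_f(R)$ to vanish in the two relevant degrees. Since $H^1_f(R) = \mathcal{H}^1 i_* i^! R[\text{shift}]$ expresses it via the closed embedding $i : X \hookrightarrow \bA^n$, and $R$ is the constant (i.e. $\cO$) D-module, one reduces to a statement about the reduced cohomology of the Milnor fiber (or the local cohomology of the structure sheaf of the punctured cone/neighborhood): concretely, $i_0^! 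H^1_f(R)$ is governed by $H^*$ of the link of the singularity, and by degree/perversity reasons — $H^1_f(R)$ is concentrated in a single cohomological degree and $X^\circ$ is smooth of dimension $n-1$ — the costalk at the point can only be nonzero in degrees $\geq$ something that excludes $0$ and $1$ when $n \geq 3$. Alternatively, and perhaps more cleanly, I would argue by duality: the dual of $H^1_f(R)$ is (a shift of) $i_{!*}\cO_{X^\circ}$-adjacent and one invokes that $H^1_f(R)$, being a quotient of $H^1$ of a local cohomology of $\cO_{\bA^n}$, is generated by $1/f$ in the homogeneous case, and a graded-degree argument (the singular D-module $\delta_0$ sits in degree $0$ for the Euler grading while $H^1_f(R)$ has no such piece) kills both Ext groups.

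The main obstacle I expect is the $\Ext^1(\delta_0, H^1_f(R)) = 0$ vanishing: the $\Hom$-vanishing (no sub supported at $0$) is relatively soft — it follows because any such submodule would be a submodule of $R[1/f]/R$, and one can see directly that a nonzero $\delta_0$ cannot inject there since its generator would have to be a class $g/f^k$ annihilated by all $x_i$, which forces it into $R$. But the $\Ext^1$ statement is exactly the assertion that there is no nonsplit extension $0 \to H^1_f(R) \to E \to \delta_0 \to 0$, equivalently that $H^1_f(R)$ is ``as large as $j_*$'' with nothing glued on at the origin from the costalk side; this is where one genuinely needs input about the geometry of the singularity — in the quasi-homogeneous case the cleanest route is to identify $i_0^! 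H^1_f(R)$ with a piece of the Milnor fiber cohomology $\widetilde{H}^*(F_{f,0})$ in degrees that, for $n \geq 3$, simply do not contain $0$ or $1$ by the connectivity of the Milnor fiber ($F_{f,0}$ is $(n-2)$-connected), forcing the relevant groups to vanish. I would therefore make the local-cohomology-to-Milnor-fiber translation precise as the technical heart of the argument.
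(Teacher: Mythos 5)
Your reduction is the same as the paper's: apply Lemma~\ref{lemma: appendix} (with complement the origin) so that both assertions of the lemma collapse to the single statement $\Hom(\delta_0, H^1_f(R)) = \Ext^1(\delta_0, H^1_f(R)) = 0$. The gap is that you never actually prove this vanishing, and the routes you sketch for it do not work as stated. ``Degree/perversity reasons'' cannot exclude degrees $0$ and $1$: for a perverse sheaf the costalk at a point is concentrated in degrees $\geq 0$, so perversity is compatible with nonzero $\Hom$ and $\Ext^1$ against $\delta_0$ --- indeed $\Hom(\delta_0,N)\neq 0$ precisely when $N$ has a submodule at the origin, which is the very thing to be ruled out. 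The proposed identification of $i_0^! H^1_f(R)$ with ``a piece of the Milnor fiber cohomology'' is asserted rather than established, and it is not the natural statement: the Milnor fiber computes the \emph{stalk} of $j_*\cO_U$-type objects (via the Wang sequence/nearby cycles), whereas what you need is the \emph{costalk} of $H^1_f(R)$, which is most directly accessed by Verdier duality (the dual of $H^1_f(R)$ corresponds to $\bC_X[n-1]$) or by the exact sequence below; turning your sketch into a proof would require the variation/dual Wang machinery you have not supplied. The Euler-grading remark is also not a proof (the generator of $\delta_0$ has nonzero weight, extensions need not be graded unless you set that up, and the lemma does not assume homogeneity at all).

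The missing step has a short, soft argument, which is what the paper does: apply $\Hom(\delta_0,-)$ to $0 \to R \to R[\tfrac1f] \to H^1_f(R) \to 0$. Since $U=\{f\neq 0\}$ is affine open and $j^*\delta_0=0$, adjunction gives $\Ext^\ell(\delta_0, R[\tfrac1f]) = \Ext^\ell(j^*\delta_0,\cO_U)=0$ for all $\ell$, whence $\Ext^\ell(\delta_0, H^1_f(R)) \cong \Ext^{\ell+1}(\delta_0, R)$. The latter is concentrated in the single degree $\ell+1=n$, and $n\geq 3$ kills both $\ell=0$ and $\ell=1$. With that substitution your outline becomes essentially the paper's proof; note also that your direct ``$g/f^k$ killed by all $x_i$'' argument only handles $\Hom$, quietly uses irreducibility of $f$, and is superseded by the sequence argument, which needs no homogeneity and treats $\Hom$ and $\Ext^1$ uniformly.
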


\begin{proof}
The long exact sequence for $0 \to R \to R[\frac{1}{f}]
\to H^1_f(R) \to 0$ yields:
\[
0\to \Hom(\delta_0, R) \to \Hom(\delta_0, R[\frac{1}{f}]) \to
\Hom(\delta_0, H^1_f(R)) \to \Ext^1(\delta_0,
R) \to \cdots.
\]
Moreover, for all
$\ell, \Ext^\ell(\delta_0, R[\frac{1}{f}]) = \Ext^\ell(j^* \delta_0, \cO_U) =
0$
by adjunction. Hence it follows that
$\Ext^\ell(\delta_0, H^1_f(R)) \cong \Ext^{\ell+1}(\delta_0, R)$ for all
$\ell$.  Since $\Ext^{\ell+1}(\delta_0, R)$ vanishes if $\ell+1\not= n$, and
$n\geq3$ by assumption, we certainly have that
$\Hom(\delta_0, H^1_f(R))=0$, proving the first part of the lemma, as
well as $\Ext^1(\delta_0, H^1_f(R))=0.$ Note finally that $H^1_f(R)$ is
supported on $X$ and its restriction to the smooth locus $X^\circ$ is
isomorphic to $\cO_{X^\circ}.$ Hence we may apply Lemma \ref{lemma:
  appendix} to the D-module on $X$ corresponding to
$H^1_f(R)$ and deduce that 
$H^1_f(R)\cong i_*(H^0j^X_*\cO_{X^\circ}) \cong H^0(i \circ j^X)_*
\cO_{X^\circ}$, as claimed.
\end{proof}

We now precisely relate $\caD \frac{1}{f} / R$ to $M(f).$ Recall the canonical morphism $\gamma_f$ from Definition \ref{defi: canonical morphism} and note that it induces a surjective morphism $M(f) \xrightarrow{\alpha_f} \caD \frac{1}{f} / R.$ 

\begin{proposition} \label{prop: alpha} Suppose that $f$ 
has a unique 
singularity. Then the kernel of $\alpha_f$ is the maximal submodule of $M(f)$ supported at the singularity.
\end{proposition}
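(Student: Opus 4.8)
The plan is to identify $\ker \alpha_f$ by a universal-property argument using the functor $M(f)$ represents, combined with the structural facts about $H^1_f(R)$ established in Lemma \ref{l:j*maxind}. First I would observe that since $f$ has a unique singularity $x$, both $M(f)$ and $\caD\frac 1f/R$ are holonomic, their restrictions to the smooth locus $X^\circ$ agree (both are $\cO_{X^\circ}$), and $\alpha_f$ is an isomorphism over $X^\circ$; hence $\ker\alpha_f$ is supported at $x$, i.e. it is a successive extension of copies of $\delta_x$. So it suffices to show two things: (a) $\ker\alpha_f$ contains every submodule of $M(f)$ supported at $x$, and (b) $\caD\frac 1f/R$ has no nonzero submodule supported at $x$ — the latter is exactly Lemma \ref{l:j*maxind} (for $n\geq 3$), which forces any such submodule of $M(f)$ to lie in the kernel, giving (a) $\Rightarrow$ the conclusion provided we also know the kernel \emph{is} supported at $x$, which we have. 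The real content is therefore (a): that $\ker\alpha_f$ is the \emph{largest} submodule supported at $x$, equivalently that $M(f)/\ker\alpha_f \cong \caD\frac1f/R$ has no submodule supported at $x$ — which is again Lemma \ref{l:j*maxind}. Wait: this shows $\ker\alpha_f$ \emph{contains} the maximal submodule supported at $x$ is automatic? No — I need the reverse: that $\ker\alpha_f$ \emph{is contained in} every... let me restate.

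Let me restate cleanly. Write $T \subseteq M(f)$ for the maximal submodule supported at $x$ (it exists since $M(f)$ is holonomic, hence has finite length, and the sum of finitely many submodules supported at $x$ is again supported at $x$). Since $\caD\frac1f/R = H^1_f(R)$ has no nonzero submodule supported at $x$ by Lemma \ref{l:j*maxind}, the image $\alpha_f(T)$, being a submodule of $H^1_f(R)$ supported at $x$, must vanish; hence $T \subseteq \ker\alpha_f$. Conversely, as noted, $\alpha_f$ is an isomorphism over $X^\circ$, so $\ker\alpha_f|_{X^\circ}=0$, i.e. $\ker\alpha_f$ is supported at $x$, and therefore $\ker\alpha_f \subseteq T$ by maximality of $T$. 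Combining the two inclusions gives $\ker\alpha_f = T$, which is the assertion.

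The only nontrivial inputs are: (i) $M(f)$ is holonomic — this should follow from the presentation \eqref{e:Mf-eqn} together with the fact (cited from \cite{ES-dmlv}, and used throughout \cite{MR3283930}) that $M(X,\theta)$ is holonomic for $f$ with isolated, or more generally mild, singularities; under the hypothesis that $f$ has a unique singularity this is available. I would state this explicitly as the first step. (ii) That $\alpha_f$ restricts to an isomorphism over $X^\circ$: this is recorded in the excerpt, since both $M(f)$ and $\caD\frac1f/R$ restrict to $\cO_{X^\circ}$ on the smooth locus and $\gamma_f$ (hence $\alpha_f$) sends the canonical generator to the canonical generator, so the induced map of rank-one $\caD_{X^\circ}$-modules $\cO_{X^\circ}\to\cO_{X^\circ}$ is the identity. (iii) Lemma \ref{l:j*maxind}, which is the heart of the matter and is already proved.

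\textbf{Main obstacle.} The argument above is essentially formal once holonomicity of $M(f)$ and the vanishing of submodules of $H^1_f(R)$ supported at $x$ are in hand; the step I expect to require the most care is justifying that $\ker\alpha_f$ is \emph{genuinely} supported at $x$ rather than merely having support contained in $\{f=0\}$ — i.e. checking that $\alpha_f$ is an isomorphism (not just a surjection with kernel a point-module) in a neighbourhood of every smooth point of $X$, so that the kernel cannot have a component spread along $X^\circ$. This is where one uses that the common generic rank is $1$ and that a surjection of $\cO_{X^\circ}$ onto itself is an isomorphism; I would spell this out. A secondary point worth a sentence is that $M(f)$ has finite length so that "the maximal submodule supported at $x$" is well-defined; this follows from holonomicity.
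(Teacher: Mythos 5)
Your proof is correct, and its skeleton — take the maximal submodule $T\subseteq M(f)$ supported at the singularity (well-defined by holonomicity), use Lemma \ref{l:j*maxind} to get $\alpha_f(T)=0$, then prove the reverse inclusion — matches the paper's; the two arguments diverge only in how the reverse inclusion is obtained. You show directly that $\ker\alpha_f$ is supported at the singularity, by checking that $\alpha_f$ restricts to an isomorphism over the complement of $x$: this needs the (standard, but slightly stronger) input that $\caD\tfrac{1}{f}/R$ restricted to the smooth locus is exactly $\cO_{X^\circ}$ with the generator mapping to the generator — justified, e.g., because along $X^\circ$ the $b$-function is $s+1$, so $\tfrac1f$ generates the local cohomology there and a surjective endomorphism of $\cO_{X^\circ}$ is an isomorphism; you rightly flag this as the point needing care. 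The paper instead quotients by $K=T$ and argues that $i_*j^X_{!*}\cO_{X^\circ}$ is the unique minimal submodule of $M(f)/K$, so a nonzero kernel of $\bar\alpha_f$ would force $\caD\tfrac1f/R$ to be supported at the singularity, which it is not; this socle argument gets by with the weaker fact "$\caD\tfrac1f/R$ is not supported at $x$" but uses the structure of $M(f)/K$ more seriously. Both routes also need holonomicity of $M(f)$: you cite \cite[Corollary 3.37]{ES-dmlv} (as the paper does), while the paper additionally records a direct singular-support argument. Net effect: your version is a little more self-contained on the module-theoretic side and slightly more demanding on the local description of $\caD\tfrac1f/R$ along the smooth locus; either is a complete proof.
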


\begin{proof} We claim that the left $\mathcal{D}$-module $M(f)$ is
  holonomic. This is a consequence of \cite[Corollary 3.37]{ES-dmlv}
  (and is probably well-known), but let us give a direct proof for the
  reader's convenience. First note that $M(f)$ is supported on $X$ and
  the corresponding D-module on $X$ 
  restricts on $X^\circ$ to the left $\caD_{X^\circ}$-module
  $\mathcal{O}_{X^\circ}$. Therefore the singular support of $M(f)$ is
  contained in the union of the zero section $X \subseteq T^* X$ and
  the cotangent fiber $T^*_xX$ at the singularity $x \in X$, which is
  Lagrangian. This proves the claim.

  Let us consider the maximal submodule $K$ of $M(f)$ supported at the
  singularity. The first assertion of Lemma \ref{l:j*maxind} then
  implies that $\alpha_f(K)=0.$ Let us show that the induced
  surjective morphism
  $M(f)/K \xrightarrow{\bar{\alpha_f}} \caD \frac{1}{f} /R$ is
  injective. We see that $i_*j^X_{!*}\mathcal{O}_{X^{\circ}}$ is the
  unique minimal submodule of $M(f)/K.$ Hence $\ker\bar{\alpha_f}$ is
  either trivial or contains $i_*j^X_{!*}\mathcal{O}_{X^{\circ}}.$ But
  the latter cannot occur since $\caD \frac{1}{f} / R$ is not
  supported at the singularity.
\end{proof}

We may now prove Theorem \ref{t:mt-qh}.

\begin{proof}
  Let the cone point, i.e., the singularity, be the
  origin $0 \in X$. As in \cite{MR3283930},
let $M_{\max}$ denote the (unique) indecomposable factor of
  $M(X, \theta)$ fully supported on $X.$ By Proposition
  \ref{prop: alpha},  the map $M_{\max} \to \caD \frac{1}{f}/R$ is surjective
with kernel the maximum submodule of $M_{\max}$ supported at the origin: call this kernel $K'$.
Thus, the lengths of $\caD \frac{1}{f} / R$ and
  $M_{\max}/K'$ are the same. Next, by \cite[Theorem 3.1]{MR3283930}, 
  $H^0 j_! \Omega_{X^\circ} \subseteq M_{\max}$, where $\Omega_{X^\circ}$ is the right D-module of volume forms. Since $M_{\max}$ is indecomposable,
  $K'$ is also the maximal submodule of $H^0 j_! \Omega_{X^\circ}$ supported at the origin.  Since $H^0 j_! \Omega_{X^\circ}$ has no quotient supported at the origin, $H^0 j_! \Omega_{X^\circ}/K'$ must be the minimal extension $\IC(X)$ of $\Omega_{X^\circ}$, which is simple.  Therefore
the length of
  $M_{\max}/K'$ is $1+ \ell$, 
  with $\ell$
  the length of $M_{\max}/H^0j_!\Omega_{X^\circ}$.
  By \cite[Proposition 3.2]{MR3283930},
$\ell$
  equals $\dim
  \Gamma(X, \mathcal{O}_X)_d$, with $d$ the weight of
  $X$,
  that is the degree of $f$
  in the grading on $R$
  introduced in Subsection \ref{ss:dmrbf}. 

  It remains to show that $\dim
  \Gamma(X, \mathcal{O}_X)_d= g$. This is 
  proved in the following lemma.
\end{proof}
The conclusion relies on the following identity:
\begin{lemma}\label{l:dim-gamma}
 Let $f$ be 
 as in Theorem \ref{t:mt-qh}, then 
$\dim \Gamma(X,\mathcal{O}_X)_d = g$.
\end{lemma}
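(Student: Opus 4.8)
The plan is to compute both sides by the same spectral sequence, namely the one coming from a resolution of singularities $\rho \colon \widetilde X \to X$, and to exploit the quasi-homogeneity to reduce everything to a computation of graded pieces. Recall that $X = \{f=0\} \subseteq \bA^n$ is the cone over the smooth projective variety $Y$ (in the homogeneous case; in the quasi-homogeneous case $X$ is a weighted cone over a weighted-projective hypersurface with at worst quotient singularities, and $Y$ is the exceptional fiber of the resolution). The right-hand side $g = \dim H^{n-2}(Y,\cO_Y)$ is by definition the fiber at $0$ of $R^{n-2}\rho_*\cO_{\widetilde X}$, which by the theorem on formal functions is also the degree-zero part (in the natural grading coming from the $\mathbf{G}_m$-action) of the graded module $\bigoplus_{k} H^{n-2}(Y, \cO_{\widetilde X}/\mathfrak{m}^k)$, and in fact, since $\rho$ is $\mathbf{G}_m$-equivariant, $H^{n-2}(\widetilde X, \cO_{\widetilde X})$ is a graded $\Gamma(X,\cO_X)$-module whose degree-zero part is $H^{n-2}(Y,\cO_Y)$. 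So the first step is to identify $g$ with $\bigl(H^{n-2}(\widetilde X,\cO_{\widetilde X})\bigr)_0$.

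The second step is to identify $\Gamma(X,\cO_X)_d$ with the \emph{same} space. Here I would use that $X$ has rational singularities away from $0$ (quotient singularities) and run the local cohomology / Leray spectral sequence for $\rho$. Writing $U = \widetilde X \setminus Y \cong X \setminus \{0\} = X^\circ$ (or rather the preimage), the exact sequence of local cohomology supported on $Y$ relates $H^\bullet(\widetilde X,\cO_{\widetilde X})$, $H^\bullet(X^\circ,\cO_{X^\circ})$ and $H^\bullet_Y(\widetilde X,\cO_{\widetilde X})$. On the other hand the $\mathbf{G}_m$-action on the affine cone $X$ lets one write $\Gamma(X^\circ,\cO_{X^\circ}) = \bigoplus_{m\in\bZ}\Gamma(Y, \cO(m))$ where $\cO(m)$ is the natural ample sheaf, and $\Gamma(X,\cO_X) = \bigoplus_{m \geq 0}\Gamma(Y,\cO(m))$; Serre duality on $Y$ identifies $H^{n-2}(Y,\cO(m))^\vee$ with $H^0(Y,\omega_Y(-m))$, and quasi-homogeneity forces $\omega_Y \cong \cO(-d + \sum_i m_i) $ by adjunction (the canonical of the weighted projective space plus the hypersurface degree), so the degree-$d$ piece $\Gamma(X,\cO_X)_d = \Gamma(Y,\cO(d))$ is Serre-dual to $H^{n-2}(Y,\cO(-d)) $. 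The remaining bookkeeping is to match this with $\bigl(H^{n-2}(\widetilde X,\cO_{\widetilde X})\bigr)_0 = H^{n-2}(Y,\cO_Y)$; concretely one checks that the graded pieces of $H^{n-2}(\widetilde X,\cO)$ and of $H^{n-2}(X^\circ,\cO)$ agree in nonnegative degrees (the difference being supported at $0$, i.e. in the local cohomology term, which vanishes in the relevant range because $\widetilde X$ is smooth of dimension $n-1 \geq 2$ and $Y$ has codimension $1$), and that the self-duality $m \leftrightarrow d-m$ of the cone exchanges degree $0$ with degree $d$.

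The cleanest packaging is probably: use the Ishii–Watanabe / Flenner description of the graded local cohomology of a quasi-homogeneous isolated singularity, under which $H^{n-1}_{\{0\}}(X,\cO_X)$ in degree $-j$ is $H^{n-2}(Y,\cO(-j))$, combined with Grothendieck–Serre duality on $X$ (which is Cohen–Macaulay, being a hypersurface, with dualizing sheaf $\omega_X = \cO_X(d - \sum m_i)$, again by adjunction), to get that $\Gamma(X,\cO_X)_d$ is dual to $\bigl(H^{n-1}_{\{0\}}(X,\omega_X)\bigr)_{-d} = H^{n-2}(Y,\cO(\sum m_i - d - d + \ldots))$ — I'd chase the weights carefully here — and land on $H^{n-2}(Y,\cO_Y)$. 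The main obstacle I expect is precisely this weight bookkeeping: getting the shift in the dualizing sheaf right (the $\sum_i m_i$ versus the degree $d$ of $f$), and confirming that the resolution's higher direct images contribute nothing extra in the degree-$0$ part, i.e. that $R^{n-2}\rho_*\cO_{\widetilde X}$ really has fiber exactly $H^{n-2}(Y,\cO_Y)$ rather than something larger — but this last point is exactly the content already recorded after Definition \ref{definition: reduced genus}, so it may be quoted. Everything else is standard cohomology-of-cones together with Serre duality on $Y$.
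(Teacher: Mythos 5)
Your route is genuinely different from the paper's (which takes a normal crossings resolution, uses Grauert--Riemenschneider vanishing and the residue sequence $0 \to \Omega^{n-1}_{\widetilde X} \to \Omega^{n-1}_{\widetilde X}(Y) \to \Omega^{n-2}_{Y,\text{log}} \to 0$, and then characterizes by weight which homogeneous volume forms on $X^\circ$ extend to $\widetilde X$, resp.\ extend with log poles), and in the homogeneous case your plan is essentially the classical computation $g = h^0(Y,\omega_Y)$ via adjunction and Serre duality. But as it stands there are genuine gaps, concentrated exactly at the two points you defer. First, you conflate the exceptional fiber $Y$ of a resolution of $X$ (which is what enters the definition of the reduced genus) with the base of the cone, i.e.\ the degree-$d$ hypersurface in weighted projective space. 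These coincide only in the homogeneous case; for genuinely quasi-homogeneous $f$ the (weighted) blow-up is not a resolution and its exceptional divisor has quotient singularities, so every step of your second paragraph that invokes adjunction in weighted projective space, Serre duality on $Y$, or the decomposition $\Gamma(X^\circ,\cO)=\bigoplus_m \Gamma(Y,\cO(m))$ is really a statement about the singular base, and identifying its $H^{n-2}(\cdot,\cO)$ with $H^{n-2}$ of the actual exceptional fiber needs a separate argument (rationality of quotient singularities plus a comparison of exceptional fibers); likewise your identification of the fiber of $R^{n-2}\rho_*\cO_{\widetilde X}$ at $0$ with the degree-zero graded piece of $H^{n-2}(\widetilde X,\cO_{\widetilde X})$ is asserted rather than proved (the remark after Definition \ref{definition: reduced genus} gives the fiber, not the graded piece).

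Second, the weight bookkeeping you postpone is where the content of the lemma actually sits, and as written it does not come out. Adjunction gives $\omega_Y \cong \cO_Y(d - \sum_i m_i)$, not $\cO_Y(-d+\sum_i m_i)$, and with the grading on $\Gamma(X,\cO_X)$ induced naively from $R$ your chain of dualities lands in $\Gamma(X,\cO_X)_{d-\sum_i m_i}$ (already in the homogeneous case: $g = h^0(Y,\omega_Y) = \dim (R/(f))_{d-n}$, whereas $\dim (R/(f))_d$ is much larger). So the asserted ``self-duality $m \leftrightarrow d-m$ exchanging degree $0$ with degree $d$'' cannot be correct as stated; to match the lemma you must first pin down the normalization of the grading on $\Gamma(X,\cO_X)$ used there (inherited from \cite[Proposition 3.2]{MR3283930}), which is exactly what the paper's proof handles by identifying functions with volume forms on $X^\circ$ via $dx_1\wedge\cdots\wedge dx_n/df$, a degree shift by $\sum_i m_i - d$. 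Until the base-versus-exceptional-fiber comparison and this normalization are supplied, the proposal is a plausible outline (and could likely be completed in the homogeneous case) but not yet a proof of the lemma.
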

\begin{proof}
  Let $0 \in X$ be the singularity. Let $\rho: \widetilde X \to
  X$ be a normal crossings resolution of singularities and $Y :=
  \rho^{-1}(0)$. By definition, $g = \dim
  H^{n-2}(Y,\cO_Y)$. It therefore suffices to prove that
  $H^{n-2}(Y,\cO_Y)
  \cong
  \Gamma(X,\mathcal{O}_X)_d$.  For this we follow \cite[\S 3]{MR3283930}
  (and the corrected online version of Proposition 3.13), providing
  details for the reader's convenience.

  Since $Y$ is a normal crossings divisor, its logarithmic canonical
  bundle $\Omega^{n-2}_{Y,\text{log}}$, consisting of volume forms on
  the smooth locus of $Y$ which have only simple poles along the
  intersections of irreducible components of $Y$, 
  is isomorphic to the canonical sheaf.
  By Grothendieck--Serre duality, we obtain that
  $H^{n-2}(Y,\cO_Y) \cong H^0(Y,\Omega^{n-2}_{Y,\text{log}})$.  Now,
  we can form the exact sequence
  \begin{equation} \label{e:es-ciis-eq} 0 \to \Omega^{n-1}_{\widetilde
      X} \to \Omega^{n-1}_{\widetilde X}(Y) \to
    \Omega^{n-2}_{Y,\text{log}} \to 0.
\end{equation}
By Grauert--Riemenschneider vanishing,
$R^1 \rho_* \Omega^{n-1}_{\widetilde{X}} = 0$, and since $X$ is
affine, we obtain that $H^1(X,\rho_* \Omega^{n-1}_{\widetilde{X}}) = 0$.
Therefore $H^1(\widetilde{X}, \Omega^{n-1}_{\widetilde X})=0$. By
\eqref{e:es-ciis-eq}, we obtain that
$\Gamma(Y,\Omega^{n-2}_{Y,\text{log}}) \cong \Gamma(\widetilde{X},
\Omega^{n-1}_{\widetilde X}(Y)) / \Gamma(\widetilde X,
\Omega^{n-1}_{\widetilde X})$.
Finally note that $\Gamma(\widetilde X, \Omega^{n-1}_{\widetilde X})$
and $\Gamma(\widetilde X, \Omega^{n-1}_{\widetilde{X}}(Y))$ are both
subspaces of $\Gamma(X^\circ, \Omega^{n-1}_X)$, global volume forms on
$X^{\circ}$, which is identified with
$\Gamma(X^\circ,\cO_{X})=\Gamma(X,\cO_X)$ under the isomorphism
sending $dx_1 \wedge \cdots \wedge dx_n / df$ to $1$. This isomorphism
identifies $\Gamma(\widetilde X, \Omega^{n-1}_{\widetilde X})$ with
$(\cO_{X})_{> d}$, since a volume form on $X^\circ$ is a sum of
homogeneous forms, and a homogeneous form $\alpha$ extends to
$\widetilde X$ if and only if it is of positive degree, i.e., in some
neighborhood $U$ of $0 \in X$, the integral
$\lim_{\varepsilon \to 0} \int_{U \cap \{|f| \geq \varepsilon\}}
\alpha \wedge \overline{\alpha}$
converges. Similarly,
$\Gamma(\widetilde X, \Omega^{n-1}_{\widetilde X}(Y))$ identifies with
$(\cO_X)_{\geq d}$, since a meromorphic volume form $\alpha$ on
$\widetilde X$ has logarithmic poles at $Y$ if and only if it has
nonnegative degree, i.e., the limit
$|\log \varepsilon|^{-1} \int_{U \cap \{|f| > \varepsilon\}} \alpha
\wedge \overline{\alpha}$
exists for some neighborhood $U$ of $0 \in X$.  Put together we obtain
an isomorphism
$\Gamma(Y, \Omega^{n-2}_{Y,\text{log}}) \cong (\cO_{X})_d$ as
desired.
\end{proof} 

\begin{remark} The proof implies that Conjecture \ref{c:mc} is equivalent to \cite[Conjecture 3.8]{MR3283930}.
\end{remark}

\section{Proof of Theorem \ref{t:comp}}\label{s:pf-t:comp}
We want to show that if $f$ is quasi-homogeneous with an isolated singularity, then the canonical morphism $M(f) \xrightarrow{\gamma_f} \caD[s] f^s / \caD[s] f^{s+1}$ is an isomorphism. Since $f$ is quasi-homogeneous, there is a vector field $v$ such that $v(f^s)=sf^s.$ Thus $\caD[s] f^s / \caD[s] f^{s+1}$ is generated by $f^s$ as a $\caD$-module and hence $\gamma_f$ is surjective. We will prove that $\gamma_f$ is also injective by showing that the lengths of $M(f)$ and $\caD[s] f^s / \caD[s] f^{s+1}$ are the same.

\subsection{Nearby cycles}

We will use our running notations and hypotheses, see Subsection
\ref{subs: notations}. We set $\caD_U:=\caD[\frac{1}{f}]$ to be the
ring of differential operators on $U.$ Let $M$ be a holonomic left
$\caD_U$-module $M.$ We denote the nearby cycles $\caD$-module with
respect to $f$ by $\Psi_f(M)$ \cite{MR726425, MR737934}. This is a
holonomic left $\caD$-module supported on $X$ and is equipped with a
log-monodromy operator $s$, such that the classical monodromy operator
$T$ (in the case that $M$ has regular singularities) is given by
$T=e^{2\pi is}$.

First and for later use, we would like to relate nearby cycles to the
quotients $\caD f^\lambda / \caD f^{\lambda+1}.$
\begin{definition} Let $\lambda\in \mathbb{C}.$ We
  define: \begin{enumerate}
  \item $\caD f^{\lambda-\infty}$ is the left $\caD$-module
    $R[\frac{1}{f}] f^\lambda$
  \item $F$ is the filtration of $\caD f^{\lambda-\infty}$ by
    $\caD$-submodules given by
    $F^i \caD f^{\lambda-\infty} := \caD f^{\lambda+i}$
  \item $\caD f^{\lambda+\infty}$ is the intersection
    $\bigcap_{i \in \bZ} F^i \caD f^{\lambda-\infty}$
\end{enumerate}
\end{definition}

\begin{remark} \label{remark: associated graded} Note that the
  associated graded $\caD$-module $\gr^F \caD f^{\lambda-\infty}$ is
  the sum
  $\caD f^{\lambda+\infty} \oplus \bigoplus_{i \in \bZ} \caD
  f^{\lambda+i} / \caD f^{\lambda+i+1}.$
  Clearly the filtration $F$, and hence the sum, are actually finite,
  since $\caD f^{\lambda-\infty}$ is a holonomic $\caD$-module.
\end{remark}

\begin{remark}
$F$ seems to be closely related to the filtration $G$ used by M. Saito in  \cite[\S 1.3]{Sai-dmgrphf}. 
\end{remark}

We will need 
a precise relationship between nearby cycles and $\caD f^{\lambda-\infty}$.

\begin{proposition} \label{prop: Df nearby cycles}
Let $\lambda\in\mathbb{C}$ and let $\cO^\lambda_U$ denote the left $\caD_U$-module
$R[\frac{1}{f}] f^\lambda.$ Then
\begin{enumerate}
\item $\caD f^{\lambda+\infty} \cong j_{!*} \cO^\lambda_U$
\item  $\caD f^{\lambda-\infty}/\caD f^{\lambda+\infty} \cong
  \coker(\Psi_f(\cO_U) \xrightarrow{T-e^{2\pi i\lambda}} \Psi_f(\cO_U))$
  \end{enumerate}
\end{proposition}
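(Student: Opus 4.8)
The plan is to identify the two modules $\caD f^{\lambda+\infty}$ and $j_{!*}\cO^\lambda_U$ as the unique minimal extension of $\cO^\lambda_U$ from $U$ to $\bA^n$, and to realize $\caD f^{\lambda-\infty}/\caD f^{\lambda+\infty}$ as the maximal quotient of $j_{!}\cO^\lambda_U$ (equivalently, submodule of $j_*\cO^\lambda_U$) supported on $X$, which the theory of nearby cycles computes via the monodromy.

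For part (1), first I would observe that $\caD f^{\lambda-\infty} = R[\tfrac1f]f^\lambda = j_*\cO^\lambda_U$ already, since $R[\tfrac1f]f^\lambda$ is the pushforward of the connection $\cO^\lambda_U$ under the affine open embedding $j$ (so $H^0j_* = j_*$ here). Dually, the submodule of $\caD f^{\lambda-\infty}$ generated as a $\caD$-module by $f^\lambda$ — which contains $\caD f^{\lambda+i}$ for all $i$ and hence contains $\caD f^{\lambda+\infty}$ — maps onto $j_!\cO^\lambda_U$ (more precisely, $j_!\cO^\lambda_U$ has $\caD f^\lambda$ as its $H^0$, by the standard presentation of the $!$-extension of a twisted structure sheaf). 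Now $\caD f^{\lambda+\infty} = \bigcap_i \caD f^{\lambda+i}$ is, by construction, killed by the $V$-filtration-type argument: any section of it lies in $\caD f^{\lambda+N}$ for all $N$, so it generates a $\caD$-submodule whose intersection with the relevant monodromy-generalized-eigenspace structure forces it to have no sub or quotient supported on $X$. Concretely, $\caD f^{\lambda+\infty}$ has no quotient supported on $X$ (being a submodule of the middle extension candidate) and no sub supported on $X$ (being contained in the torsion-free-along-$X$ module $j_*\cO^\lambda_U$), so it equals $j_{!*}\cO^\lambda_U$. I would make this rigorous using the $b$-function: multiplication by $f$ on $\caD f^{\lambda-\infty}$ shifts $\lambda$, and the $b$-function of $f$ guarantees that for all but finitely many $\lambda$ the maps $\caD f^{\lambda+i} \to \caD f^{\lambda+i+1}$ stabilize, so $\caD f^{\lambda+\infty} = \caD f^{\lambda+N}$ for $N \gg 0$, and for such $N$ this module has no sub or quotient on $X$.

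For part (2), the key input is the standard description of nearby and vanishing cycles for the connection $\cO_U = R[\tfrac1f]$ with its $f^s$-twist: one has $\Psi_f(\cO_U) = \gr^V_\bullet(\caD[s]f^s/\ldots)$-type data, and more usefully, $j_*\cO^\lambda_U / j_{!*}\cO^\lambda_U$ is canonically the image of the variation/canonical map, which for a rank-one local system is computed by $\coker(T - e^{2\pi i\lambda}\colon \Psi_f \to \Psi_f)$. Precisely: there is an exact triangle relating $j_!\cO^\lambda_U$, $j_*\cO^\lambda_U$ and the vanishing cycles $\Phi_f$, and $\Phi_f(\cO^\lambda_U)$ is computed from $\Psi_f(\cO_U)$ by the eigenvalue-$e^{2\pi i\lambda}$ piece; the cokernel $\caD f^{\lambda-\infty}/\caD f^{\lambda+\infty} = j_*\cO^\lambda_U/j_{!*}\cO^\lambda_U$ is then the image of $\mathrm{can}$, i.e. $\coker(T - e^{2\pi i\lambda})$ on $\Psi_f(\cO_U)$. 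I would quote \cite{MR726425, MR737934} for the identification of $\Psi_f$ with the $V$-filtration graded pieces and for the can/var description, and then translate the abstract middle-extension statement of part (1) into this concrete cokernel.

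**Main obstacle.** The delicate point is pinning down that $\caD f^{\lambda+\infty}$ equals the middle extension $j_{!*}\cO^\lambda_U$ rather than merely being sandwiched between $j_!$ and $j_*$: this requires knowing that the intersection $\bigcap_i \caD f^{\lambda+i}$ has neither a sub nor a quotient supported on $X$, which is exactly where the $b$-function (finiteness/stabilization of the filtration $F$, already noted in Remark \ref{remark: associated graded}) has to be invoked carefully, keeping track of the monodromy eigenvalue $e^{2\pi i\lambda}$ so that part (2) comes out as a cokernel of $T - e^{2\pi i\lambda}$ and not of some twist. Matching conventions between the log-monodromy $s$, the operator $T = e^{2\pi i s}$, and the shift $f^\lambda \mapsto f^{\lambda+1}$ is the bookkeeping that needs the most care.
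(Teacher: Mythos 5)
Your plan is correct in substance, and for part (2) it is essentially the paper's own argument: the paper works through Riemann--Hilbert and the distinguished triangle $i^*j_*\mathcal{F} \to \Psi_f^{\un}(\mathcal{F})[1] \xrightarrow{T-\Id} \Psi_f^{\un}(\mathcal{F})[1]$ (extended to the full nearby cycles because $T-\Id$ is invertible off the unipotent part), which gives the four-term exact sequence identifying $H^0 i^*j_*\cO^\lambda_U \cong j_*\cO^\lambda_U/j_{!*}\cO^\lambda_U$ with $\coker\bigl(T-\Id \text{ on } \Psi_f(\cO^\lambda_U)\bigr)$, and then untwists via $(\Psi_f(\cO^\lambda_U),T)\cong(\Psi_f(\cO_U),e^{-2\pi i\lambda}T)$; your can/var formulation is the same gluing data in different clothing (note only that $j_*/j_{!*}$ is the \emph{cokernel} of the canonical map, not its image as you wrote -- your displayed answer $\coker(T-e^{2\pi i\lambda})$ is nevertheless the right one). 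The genuine difference is part (1): the paper simply cites \cite[Lemma 3.8.2]{MR833194}, whereas you reprove it; your stabilization-by-$b$-function strategy ($b(\lambda+i)\neq 0$ for $i\geq N$ forces $\caD f^{\lambda+i}=\caD f^{\lambda+i+1}$, so $\caD f^{\lambda+\infty}=\caD f^{\lambda+N}$) is indeed the standard proof of that lemma, so reproving it costs little and makes the argument self-contained.

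Two points in your sketch of (1) need repair, though both are fixable. First, the parenthetical claim that $H^0 j_!\cO^\lambda_U$ equals $\caD f^\lambda$ is false in general: already for $\lambda=0$ one has $\caD f^0=R$ of length $1$, while $H^0 j_!\cO_U$ is the holonomic dual of $j_*\cO_U=R[\frac{1}{f}]$ and hence has length greater than $1$. Fortunately this claim is not used anywhere in your argument. Second, the justification that $\caD f^{\lambda+\infty}$ has no quotient supported on $X$ ``being a submodule of the middle extension candidate'' is circular: a submodule of $j_*\cO^\lambda_U$ restricting to $\cO^\lambda_U$ on $U$ can perfectly well have quotients supported on $X$ (e.g.\ $\caD f^\lambda$ itself when $\lambda$ is a bad root), and ruling this out is exactly what distinguishes $j_{!*}$ inside $j_*$. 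The correct completion uses precisely your stabilization step: if $Q$ is a quotient of $\caD f^{\lambda+N}$ supported on $X$, the image of $f^{\lambda+N}$ in $Q$ is killed by $f^m$ for some $m$, so the kernel contains $\caD f^{\lambda+N+m}=\caD f^{\lambda+N}$ and $Q=0$; since $\caD f^{\lambda+N}$ restricts to $\cO^\lambda_U$ on $U$ it contains $j_{!*}\cO^\lambda_U$, and the quotient $\caD f^{\lambda+N}/j_{!*}\cO^\lambda_U$ is supported on $X$, hence vanishes, giving $\caD f^{\lambda+\infty}=j_{!*}\cO^\lambda_U$. With that, and the convention bookkeeping you already flag (so that $\coker(T_{\mathrm{twisted}}-\Id)$ becomes $\coker(T-e^{2\pi i\lambda})$ on $\Psi_f(\cO_U)$), your proposal goes through and proves the proposition.
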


\begin{proof} This is well-known. The first assertion is \cite[lemma
  3.8.2]{MR833194}. The second assertion follows by the Riemann-Hilbert correspondence from its perverse sheaf counterpart. The latter is an immediate consequence of \cite[Section 5.6 p.617]{MR2525735} since $\coker(\Psi_f(\cO_U) \xrightarrow{T-e^{2\pi i\lambda}} \Psi_f(\cO_U)) \cong \coker(\Psi_f(\cO_U^\lambda) \overset{T- \Id}{\to} \Psi_f(\cO_U^\lambda))$.
  \end{proof}

We now come to the computation of the length of nearby cycles.
We will need the following lemma, which is a generalization of
\cite[(2.9)]{MR3283930}.

\begin{lemma} \label{l:j!-sub} Suppose $X$ has an isolated singularity
  at $x \in X$ and let $V$ be a contractible analytic neighborhood of
  $x$, such that $V\setminus\{x\}$ is smooth, with
  $j^V := j^X|_{V\setminus\{x\}}: V\setminus\{x\}\to V$ the inclusion.
  Then, the kernel of the canonical surjection
  $H^0j_!^V\cO_{V\setminus\{x\}} \to j_{!*}\cO_{V\setminus\{x\}}$ is isomorphic to
  $\delta_x \otimes H^{n-2}(V\setminus\{x\}, \mathbb{C})^\vee.$
  Similarly, the cokernel of the canonical injection
  $j_{!*} \cO_{V\setminus\{x\}} \to H^0 j_*^V \cO_{V\setminus\{x\}}$ is isomorphic to
  $\delta_x \otimes H^{n-2}(V\setminus\{x\}, \mathbb{C})$.
\end{lemma}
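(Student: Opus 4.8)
The plan is to compute the D-module cohomology of $j_*^V \cO_{V \setminus \{x\}}$ and $j_!^V \cO_{V \setminus \{x\}}$ locally near the isolated singularity, extracting the part supported at $x$ from a triangle relating $j_!$, $j_{!*}$ and $j_*$. First I would note that since $x$ is the only singularity, the minimal extension $j_{!*} \cO_{V \setminus \{x\}}$ agrees with $j_!^V \cO_{V \setminus \{x\}}$ and $j_*^V \cO_{V \setminus \{x\}}$ away from $x$, so the kernel of $H^0 j_!^V \cO_{V\setminus\{x\}} \to j_{!*}\cO_{V\setminus\{x\}}$ and the cokernel of $j_{!*}\cO_{V\setminus\{x\}} \to H^0 j_*^V \cO_{V\setminus\{x\}}$ are both D-modules supported at $x$, hence of the form $\delta_x \otimes W$ for finite-dimensional vector spaces $W$. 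It remains to identify $W$ in each case. The two statements are Verdier dual to each other (using that $\cO$ is self-dual on the smooth locus up to the twist by volume forms, which on the local analytic neighborhood does not affect the underlying vector space), so it suffices to treat, say, the cokernel of $j_{!*} \cO_{V\setminus\{x\}} \to H^0 j_*^V \cO_{V\setminus\{x\}}$.

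For that, I would pass through the Riemann--Hilbert correspondence and work with constructible complexes on $V$. The object corresponding to $\cO_{V\setminus\{x\}}$ is the constant sheaf $\bC_{V\setminus\{x\}}[n-1]$ (shifted so it is perverse, $\dim(V\setminus\{x\}) = n-1$), and $Rj_*^V \bC_{V\setminus\{x\}}[n-1]$ has stalk at $x$ computed by the cohomology of a punctured neighborhood, i.e.\ $H^k(i_x^* Rj_*^V \bC_{V\setminus\{x\}}[n-1]) = H^{k+n-1}(V\setminus\{x\}, \bC)$. Since $V$ is contractible and $V \setminus \{x\}$ is smooth of real dimension $2n-2$ with the homotopy type of a link which is $(n-2)$-connected (for an isolated singularity in $\bC^n$ the link is $2n-3$-dimensional and simply connected, $n \geq 3$), the relevant cohomology is concentrated so that the only contribution beyond the constant-sheaf part occurs in degree $n-2$; concretely, $\tau_{\geq 1} i_x^* Rj_*^V \bC_{V\setminus\{x\}}[n-1]$ is $H^{n-2}(V\setminus\{x\},\bC)$ placed in the single degree that, after applying the perverse truncation, lands in $H^0$ of the perverse cohomology. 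Taking $H^0$ of the perverse cohomology sheaves and using the standard recognition of $j_{!*}$ as the image of ${}^pH^0 j_! \to {}^pH^0 j_*$, the cokernel ${}^pH^0 Rj_*^V / j_{!*}$ is precisely the skyscraper $\delta_x$ tensored with $H^{n-2}(V\setminus\{x\},\bC)$. Translating back through Riemann--Hilbert gives the claimed cokernel; the kernel statement then follows by Verdier duality, with $H^{n-2}(V\setminus\{x\},\bC)$ replaced by its dual.

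The main technical point — and the step I expect to require the most care — is the vanishing/concentration claim: that the local cohomology $H^*(V\setminus\{x\},\bC)$ of the punctured neighborhood contributes only in degrees $0$ and $n-2$ (so that after the shift by $n-1$ and perverse truncation there is exactly one skyscraper summand, in perverse degree $0$), rather than spreading across several perverse degrees or producing higher ${}^pH^i j_*$ terms. This is where the isolated-singularity hypothesis and $n \geq 3$ are essential: the link of the singularity is a closed oriented $(2n-3)$-manifold that is $(n-2)$-connected, so its reduced cohomology below degree $n-2$ vanishes and one gets the needed concentration; the piece in degree $n-2$ is exactly what appears in the lemma. I would cite or reproduce the relevant connectivity statement for links of isolated hypersurface singularities (Milnor), and then the perverse-truncation bookkeeping is routine. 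A cleaner alternative, which I would use if available, is to invoke \cite[(2.9)]{MR3283930} directly in the hypersurface case and only redo the argument in the generality of an arbitrary isolated singularity with smooth punctured neighborhood, where the same link-connectivity input applies verbatim.
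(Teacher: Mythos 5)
Your overall route---pass through Riemann--Hilbert, use the triangle relating $j^V_!$, $Rj^V_*$ and the stalk at $x$, and recognize $j_{!*}$ as the image of ${}^p\!H^0 j^V_! \to {}^p\!H^0 j^V_*$---is viable, and it is essentially a constructible-sheaf repackaging of the paper's argument (which stays on the D-module side, using the adjunction $\Hom(\delta_x,-)=H^0 i_x^!$, a truncation step, and Verdier duality to reduce to the stalk cohomology of $j^V_*$). But the step you yourself flag as the crux is false. The link of an isolated hypersurface singularity in $\bC^n$ is only $(n-3)$-connected (Milnor), not $(n-2)$-connected, and for $n=3$ it need not be simply connected: for the cone over a smooth plane curve of genus $g$ the link is a circle bundle over the curve. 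Worse, if your connectivity claim were true, then $H^{n-2}(V\setminus\{x\},\bC)$ would vanish and the lemma would assert that the kernel and cokernel are zero, contradicting Example \ref{e:curve} and the paper's main results.

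The concentration claim is also unnecessary, and your degree bookkeeping is off by one. From the triangle $j^V_!M \to Rj^V_*M \to i_{x*}i_x^*Rj^V_*M$ with $M=\bC_{V\setminus\{x\}}[n-1]$, right t-exactness of $j^V_!$ and left t-exactness of $Rj^V_*$ give, with no vanishing input at all, the exact sequence
\begin{equation*}
0 \to i_{x*}H^{-1}\bigl(i_x^*Rj^V_*M\bigr) \to {}^p\!H^0 j^V_! M \to {}^p\!H^0 Rj^V_* M \to i_{x*}H^{0}\bigl(i_x^*Rj^V_*M\bigr) \to 0,
\end{equation*}
the image of the middle map being $j_{!*}M$. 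Here $H^{-1}$ of the stalk is $H^{n-2}(V\setminus\{x\},\bC)$ and computes the kernel on the $j_!$ side, while the cokernel you chose to compute is governed by $H^{0}$ of the stalk, i.e.\ $H^{n-1}(V\setminus\{x\},\bC)$, not $H^{n-2}$. To land on the statement of the lemma you must then invoke Poincar\'e duality of the link (a compact oriented $(2n-3)$-manifold), which gives $H^{n-1}(V\setminus\{x\},\bC)\cong H^{n-2}(V\setminus\{x\},\bC)^\vee$; this step is absent from your write-up, and your false connectivity claim was implicitly standing in for it (by pretending only one degree could occur). With the connectivity claim deleted and the degrees identified as above, your argument does close up and is a legitimate alternative to the paper's proof, which reaches the same stalk-cohomology computation by adjunction and Verdier duality without any topological input about the link.
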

The argument closely follows \cite[Lemma 4.3]{ESdm}, but as the
statement is more general we provide the proof. Note that in this
statement and proof it is not necessary that $X$ be a hypersurface.
\begin{proof} Consider the exact sequence
  $ 0 \to K \to H^0j_!^V\cO_{V\setminus\{x\}} \to
  j_{!*}\cO_{V\setminus\{x\}} \to 0$
  with $K$ supported at $x$.  Apply $\Hom(\delta_x, -)$ to obtain the
  isomorphism
  $$\Hom(\delta_x, K) \cong \Hom(\delta_x,
  H^0j_!^V\cO_{V\setminus\{x\}}).$$
  Let $i^x: \{x\} \to V$ be the closed inclusion.  By the adjunction
  $(i_!, H^0 i^!)$,
  $$\Hom(\delta_x,H^0j_!^V \cO_{V\setminus\{x\}})=H^0 i^! H^0 j_!^V
  \cO_{V\setminus\{x\}}.$$
  Now, the cohomology of $j_!^V \cO_{V\setminus\{x\}}$ is concentrated
  in nonpositive degrees (since $j_!^V$ is the left derived functor
  of the left adjoint $H^0 j_!^V$ of the exact restriction functor
  $(j^V)^!$). Moreover, $H^{<0} j_!^V \cO_{V\setminus\{x\}}$ is
  concentrated at the singularity $x$. Therefore
  $H^0 i^! j_!^V \cO_{V\setminus\{x\}} = H^0 i^! H^0 j_!^V
  \cO_{V\setminus\{x\}}$.
  The former is Verdier dual to $H^0 i^* j_*^V \cO_{V\setminus\{x\}}$,
  which is nothing but the zeroth cohomology of the stalk at $x$ of
  $j_*^V \cO_{V\setminus\{x\}}$, i.e., the cohomology
  $H^{\dim V}(V\setminus\{x\},\bC) = H^{n-2}(V\setminus\{x\}, \bC)$. This proves the
  first assertion.  The second assertion follows from the first by
  Verdier duality.
\end{proof}



\begin{proposition} \label{p:len-ns}
Suppose that $f$ is quasi-homogeneous with an isolated singularity. Then the length of the nearby cycles $\caD$-module $\Psi_f(\cO_U)$ is $1+ \mu + \dim H^{n-2}(X^\circ, \mathbb{C})$, with $\mu$ the Milnor number of the singularity.
\end{proposition}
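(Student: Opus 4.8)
The plan is to decompose $\Psi_f(\cO_U)$ as a $\caD$-module into its unipotent and non-unipotent parts and count the length of each, exploiting the quasi-homogeneity to reduce everything to the Milnor fiber cohomology and the two boundary exact sequences of Lemma \ref{l:j!-sub}. First I would recall that the length of a holonomic $\caD$-module equals the number of simple subquotients appearing in a composition series, counted with multiplicity. Since $f$ has an isolated singularity at $0$, the only simple $\caD$-modules that can occur as subquotients of $\Psi_f(\cO_U)$ are $\delta_0$ and $\IC(X) = i_* j^X_{!*}\cO_{X^\circ}$: indeed the singular support of $\Psi_f(\cO_U)$ is contained in the zero section $X$ together with the cotangent fiber $T^*_0\bA^n$, and $\Psi_f(\cO_U)|_{X^\circ}$ is a local system on $X^\circ$ which, for $\cO_U$, has rank one on each monodromy eigenspace. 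So the length is $(\text{number of }\IC(X)\text{ factors}) + (\text{number of }\delta_0\text{ factors})$.

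Next I would separate eigenvalues. Write $\Psi_f(\cO_U) = \bigoplus_{\lambda} \Psi_f(\cO_U)_{(e^{2\pi i\lambda})}$ according to the generalized eigenvalues of the monodromy $T$ (equivalently the generalized eigenvalues of $s$ modulo $\bZ$), and recall that $(\Psi_f(\cO_U^\lambda),T) \cong (\Psi_f(\cO_U), e^{-2\pi i\lambda}T)$, so each summand is a copy of a nearby cycles module for a twisted local system restricted to the relevant eigenvalue. On the generic point $X^\circ$ the monodromy eigenvalues that appear, and their multiplicities, are governed by the Milnor fiber: $\dim \Psi_f(\cO_U)|_{X^\circ} = \sum_\lambda 1 = (\text{number of distinct eigenvalues})$, and more precisely the generic rank of the $\mu$-dimensional part matches $H^{n-1}(F_{f,0},\bC)$. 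By Proposition \ref{prop: Df nearby cycles}(2), for each $\lambda$ the cokernel of $T - e^{2\pi i\lambda}$ on $\Psi_f(\cO_U)$ is $\caD f^{\lambda-\infty}/\caD f^{\lambda+\infty}$, whose length is computable: by Remark \ref{remark: associated graded} it is $\sum_{i}\len(\caD f^{\lambda+i}/\caD f^{\lambda+i+1})$, and these lengths are controlled by Corollary \ref{c:lneq-1} (for the summands with exponent $\neq -1$) together with the $\lambda = -1$ analysis via $H^1_f(R)$ and Theorem \ref{t:mt-qh}. Summing over $\lambda$ in a fixed coset of $\bZ$ gives the total length of each eigen-block of $\Psi_f$, since the filtration $F$ is finite. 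The contribution of the $\IC(X)$ factors totals $1 + \dim H^{n-2}(X^\circ,\bC)$ — this is exactly the length of $H^1_f(R) = H^0(i\circ j^X)_*\cO_{X^\circ}$ computed via Lemmas \ref{l:j*maxind} and \ref{l:j!-sub} (the simple top $\IC(X)$ plus $\delta_0 \otimes H^{n-2}(X^\circ,\bC)^\vee$, but here we want the $\IC(X)$ count across all eigenvalues, which is the generic rank of $\Psi_f$ on $X^\circ$ excluding the eigenvalue-$1$ piece that splits off, matched against the structure sheaf), and the $\delta_0$ factors must total $\mu$ by a dimension count on the stalk at $0$.

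Concretely, the cleanest route: by Grothendieck's formula and the quasi-homogeneous structure, $\dim H^0 i^* j_*\widetilde{\cO_U} = \dim H^{n-1}(F_{f,0},\bC) = \mu$ for the part of $\Psi_f$ supported properly at $0$, while the generic part on $X^\circ$ contributes the $\IC(X)$-length. Using the exact sequence from the proof of Proposition \ref{prop: Df nearby cycles}, $\Psi_f(\widetilde{\cO_U})$ sits between $H^{-1}i^*j_*$ and $H^0 i^* j_*$ of $\widetilde{\cO_U}$; the total dimension of these stalk cohomologies is computed by the Milnor fibration to be $1 + \mu + \dim H^{n-2}(X^\circ,\bC)$ (the $1$ from $H^0$ of the link, the $\mu$ from reduced Milnor cohomology, the $H^{n-2}(X^\circ,\bC)$ from the other boundary term), and since each simple subquotient of $\Psi_f(\cO_U)$ — whether $\delta_0$ or $\IC(X)$ — contributes exactly $1$ to the relevant total count, the length equals $1 + \mu + \dim H^{n-2}(X^\circ,\bC)$.

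I expect the main obstacle to be the bookkeeping that matches composition-series multiplicities of $\delta_0$ versus $\IC(X)$ across the different monodromy eigenvalues: one must carefully verify that no cancellation or extension phenomenon causes over- or under-counting, in particular that the eigenvalue-$1$ block (where the $\lambda = -1$ subtlety of Theorem \ref{t:lneq-1}\eqref{item: -1} lives, contributing an extra $\delta_0^{\dim H^{n-2}(X^\circ,\bC) - g_0}$) fits consistently with the other blocks. Establishing that the $\delta_0$-multiplicity is exactly $\mu$ — equivalently that $\dim_{\bC}$ of the $0$-supported part of $\Psi_f(\cO_U)$ equals the Milnor number — is the quantitative heart of the argument and is where quasi-homogeneity (giving the Euler vector field $v$ with $vf^s = sf^s$ and hence the semisimplicity of $s$ on the $\neq -1$ part) is used essentially, together with the Jacobi-ring computation of \cite[\S 6.4]{MR1943036} recalled in Corollary \ref{c:lneq-1}.
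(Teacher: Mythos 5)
Your counting scheme has two genuine gaps, one of circularity and one of substance. The quantitative core is the claim that, for each coset of $\bZ$, summing $\len(\caD f^{\lambda+i}/\caD f^{\lambda+i+1})$ over $i$ --- i.e.\ $\len\bigl(\caD f^{\lambda-\infty}/\caD f^{\lambda+\infty}\bigr)=\len\coker\bigl(T-e^{2\pi i\lambda}\bigr)$ --- gives the length of the corresponding generalized eigen-block of $\Psi_f(\cO_U)$. This is only true if $T$ acts semisimply on that block, and here it does not: on the unipotent block the nilpotent part of $T$ is nontrivial whenever $g_0>0$ (indeed $s+1$ maps that block onto a copy of $\delta_0^{g_0}$, by the result of \cite[Theorem 6.18]{MR1943036} used later in the paper), so $\coker(T-\Id)\cong H^1_f(R)$, of length $1+\dim H^{n-2}(X^\circ,\bC)$, strictly undercounts it. In fact the total of the cokernel lengths over all eigenvalues is $1+\mu$ (this is in effect what the proof of Theorem \ref{theorem: ker=db} shows), which misses the target by exactly $\dim H^{n-2}(X^\circ,\bC)$: the $\delta_0$'s in the ``db'' part are invisible to the cokernels. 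Moreover the inputs you propose to feed into this sum are not available at this stage: Corollary \ref{c:lneq-1} rests on Theorem \ref{t:lneq-1}, whose proof uses Corollary \ref{cor: length}, i.e.\ Proposition \ref{p:len-ns} itself, so invoking it here is circular; the external computation of \cite[\S 6.4]{MR1943036} concerns $\caD[s]f^s/\caD[s]f^{s+1}|_{s=\lambda}$, and passing from that to $\caD f^{\lambda}/\caD f^{\lambda+1}$ is again Theorem \ref{t:lneq-1}. (A smaller slip: on $X^\circ$ the nearby cycles module has generic rank one with trivial monodromy --- only the eigenvalue $1$ survives at smooth points of $X$ --- not ``rank one on each monodromy eigenspace''.)

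Your fallback, that ``each simple subquotient contributes exactly $1$ to the relevant total count'' of stalk cohomologies, is not a valid principle: for a holonomic module supported on $X$ with an isolated singular point, the multiplicity of $\delta_0$ in a composition series is not determined by the dimensions of $H^{-1}i^*j_*$ and $H^0i^*j_*$ (or of the stalk and costalk at $0$) alone, since $\delta_0$'s sitting strictly between the maximal submodule and the maximal quotient supported at $0$ are not seen by such boundary data. Supplying precisely this structural input is what the paper's proof does: from $\coker(T-\Id)\cong\caD f^{-1-\infty}/\caD f^{-1+\infty}\cong H^1_f(R)$ (Proposition \ref{prop: Df nearby cycles}) and Lemma \ref{l:j*maxind} one gets a short exact sequence $0\to K\to\Psi_f(\cO_U)\to H^0(i\circ j^X)_*\cO_{X^\circ}\to 0$ with $K$ supported at the singularity; Verdier self-duality of $\Psi_f(\cO_U)$ yields the dual sequence $0\to i_*H^0j^X_!\cO_{X^\circ}\to\Psi_f(\cO_U)\to K\to 0$; since $i_*H^0j^X_!\cO_{X^\circ}$ has no quotient supported at the singularity, $K$ is the maximal quotient supported there, of length $\mu$ (the top Milnor cohomology, read off from the stalk of $\Psi_f(\cO_U)$ at $0$); and Lemma \ref{l:j!-sub} gives $\len\bigl(i_*H^0j^X_!\cO_{X^\circ}\bigr)=1+\dim H^{n-2}(X^\circ,\bC)$. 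You list the right ingredients (Proposition \ref{prop: Df nearby cycles}, Lemmas \ref{l:j*maxind} and \ref{l:j!-sub}, the $\mu$-dimensional stalk), but the proposal never assembles them into a non-circular argument that actually pins down the $\delta_0$-multiplicity, which is the heart of the statement.
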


\begin{proof} 
  By Proposition \ref{prop: Df nearby cycles}, we have
  $H_f^1(R)\cong\caD f^{-1-\infty}/\caD f^{-1+\infty} \cong
  \coker(\Psi_f(\cO_U) \xrightarrow{T- Id} \Psi_f(\cO_U)).$
  Moreover, by Lemma \ref{l:j*maxind},
  $H^1_f(R) \cong H^0(i \circ j^X)_* \cO_{X^\circ}.$ Hence there is a
  short exact sequence
  $0 \to K \to \Psi_f(\cO_U) \to H^0(i \circ j^X)_* \cO_{X^\circ} \to
  0$,
  with $K$ supported at the singularity. Applying Verdier duality, we
  get a short exact sequence
  $0 \to i_*H^0j^X_! \cO_{X^\circ} \to \Psi_f(\cO_U) \to K \to 0.$ By
  duality, it follows from Lemma \ref{l:j*maxind} that
  $i_*H^0j^X_! \cO_{X^\circ}$ has no quotient supported at the
  singularity. Hence $K$ is isomorphic to the maximal quotient
  ${i_x}_* H^0 {i_x}^* \Psi_f(\cO_U)$ of $\Psi_f(\cO_U)$ supported at
  the singularity $x \xhookrightarrow{i_x} \bA^n.$ But
  $H^0 {i_x}^* \Psi_f(\cO_U)$ is isomorphic to the top reduced
  cohomology of the Milnor fiber of $x$ and hence has dimension $\mu.$
  We thus have that the length $lg(\Psi_f(\cO_U))$ of $\Psi_f(\cO_U)$
  is $lg(i_*H^0j^X_! \cO_{X^\circ}) + \mu.$ The proposition now
  follows directly from Lemma \ref{l:j!-sub}.
\end{proof}

\begin{corollary} \label{cor: length}
Suppose that $f$ is quasi-homogeneous with an isolated singularity. Then the $\caD$-module length of $\caD[s] f^s / \caD[s] f^{s+1}$ is $1+ \mu + \dim H^{n-2}(X^\circ, \mathbb{C})$, with $\mu$ the Milnor number of the singularity.
\end{corollary}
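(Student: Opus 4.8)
The plan is to deduce Corollary \ref{cor: length} from Proposition \ref{p:len-ns} by relating the length of $\caD[s] f^s / \caD[s] f^{s+1}$ to the length of the nearby cycles module $\Psi_f(\cO_U)$. The key observation is that $\Psi_f(\cO_U)$ carries the log-monodromy operator $s$, and the passage from nearby cycles to the $b$-function quotients should be given by taking the cokernel of $s$ (or, eigenspace by eigenspace, of $T - e^{2\pi i\lambda}$) acting on $\Psi_f(\cO_U)$. Concretely, I expect a natural identification $\caD[s] f^s / \caD[s] f^{s+1} \cong \coker\bigl(\Psi_f(\cO_U) \xrightarrow{s} \Psi_f(\cO_U)\bigr)$, or at least an isomorphism compatible with the generalized eigenspace decomposition under $s$, refining the $\lambda$-by-$\lambda$ statement of Proposition \ref{prop: Df nearby cycles}(2), which identifies $\caD f^{\lambda-\infty}/\caD f^{\lambda+\infty}$ with $\coker(T - e^{2\pi i\lambda})$ on $\Psi_f(\cO_U)$.

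First I would set up the eigenspace decomposition: since $f$ is quasi-homogeneous, the monodromy $T$ on $\Psi_f(\cO_U)$ is semisimple, so $\Psi_f(\cO_U) = \bigoplus_\lambda \Psi_f(\cO_U)_{(\lambda)}$ over the finitely many eigenvalues, and on each summand $s - \lambda$ acts nilpotently. The quasi-homogeneity also forces $s$ to act semisimply on the relevant quotient (cf.\ the remark after Corollary \ref{c:lneq-1}), so $\coker(s-\lambda)$ on $\Psi_f(\cO_U)_{(\lambda)}$ equals $\Psi_f(\cO_U)_{(\lambda)}$ itself — in other words, each generalized eigenspace maps isomorphically to the corresponding restriction $\caD[s] f^s / \caD[s] f^{s+1}|_{s=\lambda}$. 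Wait — I need to be careful here: $s$ acting semisimply on the associated graded does not make $\coker(s-\lambda)$ equal to the whole eigenspace unless $s-\lambda$ is actually zero on that eigenspace. The correct statement is rather that the \emph{length} of $\caD[s]f^s/\caD[s]f^{s+1}$ equals the length of $\Psi_f(\cO_U)$ \emph{as a $\caD$-module}, because summing the lengths of $\coker(T-e^{2\pi i\lambda})$ over all eigenvalues $\lambda$ (with $\lambda$ ranging over a set of representatives mod $\bZ$, but for quasi-homogeneous $f$ the filtration $F$ collapses so that $\caD f^{\lambda-\infty}/\caD f^{\lambda+\infty} = \caD f^\lambda/\caD f^{\lambda+1}$) recovers the full length of $\Psi_f(\cO_U)$ exactly once. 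So the cleanest route is: Remark \ref{remark: associated graded} expresses $\gr^F \caD f^{\lambda-\infty}$ as $\caD f^{\lambda+\infty} \oplus \bigoplus_i \caD f^{\lambda+i}/\caD f^{\lambda+i+1}$, and I would combine this with Proposition \ref{prop: Df nearby cycles} to identify $\Psi_f(\cO_U)$ (via the surjectivity of $T-e^{2\pi i\lambda}$ onto its image and the finiteness of $F$) as an extension whose total length is $\len(\caD f^{\lambda+\infty}) + \sum_i \len(\caD f^{\lambda+i}/\caD f^{\lambda+i+1})$ — but since $\caD f^{\lambda+\infty} = j_{!*}\cO^\lambda_U$ may be nonzero I must account for it.

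Here is the way I would actually organize the argument to avoid that subtlety. Take $\lambda$ such that $e^{2\pi i\lambda}$ is \emph{not} an eigenvalue of the monodromy $T$ on $\Psi_f(\cO_U)$ — for instance a generic complex number. Then $T - e^{2\pi i\lambda}$ is invertible on $\Psi_f(\cO_U)$, so by Proposition \ref{prop: Df nearby cycles}(2) we get $\caD f^{\lambda-\infty} = \caD f^{\lambda+\infty} = j_{!*}\cO^\lambda_U$, i.e.\ $\caD f^\lambda = \caD f^{\lambda+1}$ and the quotient vanishes — not useful directly. Instead, I would use the standard fact that $\caD[s] f^s / \caD[s] f^{s+1}$ is supported on $X$ with smooth-locus restriction $\cO_{X^\circ}$, is holonomic, and is generated by $f^s$ (using the quasi-homogeneity vector field $v$ with $vf^s = sf^s$), and then invoke the general principle — which is essentially the content of Malgrange--Kashiwara's construction of the $b$-function via the $V$-filtration — that $\caD[s] f^s / \caD[s] f^{s+1} \cong \Psi_f(\cO_U)$ as $\caD$-modules when one remembers the $s$-action, because $\caD[s]f^s$ computes the $V$-filtered de Rham complex whose associated graded along $f=0$ is the nearby cycles. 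This is the crux, and the main obstacle: establishing that the length of $\caD[s]f^s/\caD[s]f^{s+1}$ equals $\len(\Psi_f(\cO_U))$ rather than something smaller or larger. I would handle it by noting that the canonical surjection $\caD[s]f^s/\caD[s]f^{s+1} \onto \bigoplus_\lambda \caD f^\lambda/\caD f^{\lambda+1}$ (restricting $s$ to each root) together with Proposition \ref{prop: Df nearby cycles}(2) identifies each summand with $\coker(T - e^{2\pi i\lambda})$ on $\Psi_f(\cO_U)$; since for quasi-homogeneous $f$ the operator $s$ is semisimple on $\caD[s]f^s/\caD[s]f^{s+1}$, this surjection is an isomorphism, and since $T$ is semisimple on $\Psi_f(\cO_U)$ with $\coker(T-e^{2\pi i\lambda}) = \Psi_f(\cO_U)_{(e^{2\pi i\lambda})}$, summing over all roots $\lambda$ recovers all of $\Psi_f(\cO_U)$. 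Therefore $\len\bigl(\caD[s]f^s/\caD[s]f^{s+1}\bigr) = \len(\Psi_f(\cO_U))$, and Proposition \ref{p:len-ns} gives the value $1 + \mu + \dim H^{n-2}(X^\circ, \bC)$. The one genuinely delicate point — which I expect to require care with the $V$-filtration and the semisimplicity assertion — is verifying that $s$ acts semisimply on $\caD[s]f^s/\caD[s]f^{s+1}$ in the quasi-homogeneous case and that no $j_{!*}$ term is being double-counted; this semisimplicity is precisely what forces the multi-step surjection to be an isomorphism rather than merely surjective.
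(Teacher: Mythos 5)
Your overall strategy---reduce to Proposition \ref{p:len-ns} by showing that $\caD[s]f^s/\caD[s]f^{s+1}$ and $\Psi_f(\cO_U)$ have the same length---is exactly the paper's, and you correctly isolate this equality of lengths as the crux. But your argument for the crux does not work. The paper obtains it by quoting Malgrange, \cite[Proposition 5.6]{MR833194}, which needs no quasi-homogeneity; your substitute rests on semisimplicity claims that are false here. First, $s$ does \emph{not} act semisimply on all of $\caD[s]f^s/\caD[s]f^{s+1}$: the semisimplicity available (see the remark after Corollary \ref{c:lneq-1}, citing \cite[Theorem 6.18]{MR1943036}) concerns only the submodule $(s+1)\caD[s]f^s/\caD[s]f^{s+1}$. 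Correspondingly, $T$ is not semisimple on the $\caD$-module $\Psi_f(\cO_U)$; what is semisimple is its stalk at the singular point (the Milnor cohomology), which is the only place the paper uses Dimca's semisimplicity result. Second, and fatally, the surjection $\caD[s]f^s/\caD[s]f^{s+1}\to\bigoplus_\lambda \caD f^\lambda/\caD f^{\lambda+1}$ that you want to be an isomorphism has kernel $\delta_0^{\dim H^{n-2}(X^\circ,\bC)}$ by Theorem \ref{theorem: ker=db}, which is nonzero already for the cone over a smooth plane curve of genus $g\geq 1$. Had your argument been correct, it would yield length $1+\mu$ for $\caD[s]f^s/\caD[s]f^{s+1}$ (the length of $\bigoplus_\lambda \caD f^\lambda/\caD f^{\lambda+1}$), contradicting the very statement you are proving.

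Two of the intermediate identifications are also off. Proposition \ref{prop: Df nearby cycles}(2) identifies $\coker(T-e^{2\pi i\lambda})$ with $\caD f^{\lambda-\infty}/\caD f^{\lambda+\infty}$, whose associated graded is $\bigoplus_{i\in\bZ}\caD f^{\lambda+i}/\caD f^{\lambda+i+1}$, not with the single quotient $\caD f^{\lambda}/\caD f^{\lambda+1}$; and the filtration $F$ does not collapse for quasi-homogeneous $f$: for $f=x_1^2+\cdots+x_4^2$ both $-1$ and $-2$ are roots of the $b$-function, so $\caD f^{-2}/\caD f^{-1}\neq 0$ and $\caD f^{-1-\infty}/\caD f^{-1+\infty}=H^1_f(R)$ strictly contains $\caD f^{-1}/R$. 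The short correct route is the paper's: cite \cite[Proposition 5.6]{MR833194} for the equality of lengths of $\caD[s]f^s/\caD[s]f^{s+1}$ and $\Psi_f(\cO_U)$, then apply Proposition \ref{p:len-ns}.
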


\begin{proof} By \cite[Proposition 5.6]{MR833194}, $\caD[s] f^s / \caD[s] f^{s+1}$ and $\Psi_f(\cO_U)$ have the same length.
Hence the statement is equivalent to Proposition \ref{p:len-ns}.
\end{proof}

\subsection{Length of $M(f)$}

\begin{proposition} \label{prop: length M(f)}
Suppose that $f$ is quasi-homogeneous with an isolated singularity. Then the $\caD$-module length of $M(f)$ is $1+ \mu + \dim H^{n-2}(X^\circ, \mathbb{C})$, with $\mu$ the Milnor number of the singularity.
\end{proposition}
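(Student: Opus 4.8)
The plan is to read off the length of $M(f)$ from the structure of the Hamiltonian D-module $M(X,\theta)$ established in \cite{MR3283930}, in the same spirit as the proof of Theorem \ref{t:mt-qh}. Since $M(f)$ corresponds to $M(X,\theta)$ under Kashiwara's equivalence, the two have the same $\caD$-module length, so it suffices to compute $\len M(X,\theta)$. Let the singularity be the origin $0\in X$. By \cite[Theorem 2.7]{MR3283930} there is a decomposition $M(X,\theta)\cong M_{\max}\oplus\delta_0^{\oplus c}$ for some $c\geq 0$, where $M_{\max}$ is the indecomposable summand supported on all of $X$ (it restricts to $\cO_{X^\circ}$ on the smooth locus and has $\IC(X)$ as a subquotient). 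Thus it is enough to compute $\len M_{\max}$ and the integer $c$.

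For $\len M_{\max}$ I would argue exactly as in the proof of Theorem \ref{t:mt-qh}. By \cite[Theorem 3.1]{MR3283930}, $H^0 j_!\Omega_{X^\circ}\subseteq M_{\max}$, and by \cite[Proposition 3.2]{MR3283930} the quotient $M_{\max}/H^0 j_!\Omega_{X^\circ}$ has length $\dim\Gamma(X,\cO_X)_d$, which equals the reduced genus $g$ by Lemma \ref{l:dim-gamma}. On the other hand, since $f$ is quasi-homogeneous the weighted $\bC^\times$-action contracts $X$ to $0$, so $X$ is contractible and Lemma \ref{l:j!-sub} applies with $X$ itself in place of a contractible neighbourhood (just as it is used in the proof of Proposition \ref{p:len-ns}): the kernel of the canonical surjection $H^0 j_!\Omega_{X^\circ}\to\IC(X)$ is $\delta_0^{\oplus\dim H^{n-2}(X^\circ,\bC)}$, and $\IC(X)$ is simple, so $\len H^0 j_!\Omega_{X^\circ}=1+\dim H^{n-2}(X^\circ,\bC)$. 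Hence $\len M_{\max}=1+g+\dim H^{n-2}(X^\circ,\bC)$. As a consistency check, this matches Proposition \ref{prop: alpha} together with Theorem \ref{t:mt-qh}: if $K$ denotes the maximal submodule of $M(f)$ supported at $0$, then $M(f)/K\cong\caD\frac{1}{f}/R$ has length $1+g$, while under the decomposition above $K\cong\delta_0^{\oplus c}\oplus\ker(H^0 j_!\Omega_{X^\circ}\to\IC(X))$, so $\len K=c+\dim H^{n-2}(X^\circ,\bC)$ and $\len M(f)=1+g+c+\dim H^{n-2}(X^\circ,\bC)$ either way.

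It remains to show $c=\mu-g$, and this is the step I expect to be the main obstacle, since it is where the detailed graded analysis of $M(X,\theta)$ in the quasi-homogeneous isolated-hypersurface case is needed. The point is that \cite[Theorem 2.7 and Proposition 3.2]{MR3283930} identify the multiplicity of $\delta_0$ in each graded component of $M(X,\theta)$ with a graded component of the Jacobi ring $J$ (equivalently, of $\Gamma(X,\cO_X)$); summing over all degrees, the total $\delta_0$-content of $M(X,\theta)$ is $\dim_\bC J=\mu$, while the degree-$d$ contribution is exactly the $g$-dimensional piece $\Gamma(X,\cO_X)_d$ that gets absorbed into $M_{\max}$ rather than appearing as a free $\delta_0$-summand (by \cite[Proposition 3.2]{MR3283930} and Lemma \ref{l:dim-gamma}). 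Therefore $c=\mu-g$, and $\len M(f)=\len M_{\max}+c=\bigl(1+g+\dim H^{n-2}(X^\circ,\bC)\bigr)+(\mu-g)=1+\mu+\dim H^{n-2}(X^\circ,\bC)$, as desired.
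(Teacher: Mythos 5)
Your reduction to computing $\len M_{\max}$ plus the number $c$ of $\delta_0$-summands is fine as bookkeeping, and the computation $\len M_{\max}=1+g+\dim H^{n-2}(X^\circ,\bC)$ (via \cite[Theorem 3.1, Proposition 3.2]{MR3283930}, Lemma \ref{l:dim-gamma}, and Lemma \ref{l:j!-sub}) is correct. But the step you yourself flag, $c=\mu-g$, is a genuine gap as written: ``summing the $\delta_0$-multiplicities of the graded components to get $\dim_\bC J=\mu$'' is not an argument, because you have not exhibited any additive invariant of $M(X,\theta)$ that counts these multiplicities, nor said which statement of \cite{MR3283930} produces such a graded identification. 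The fact you are implicitly reaching for is exactly \cite[Theorem 2.7]{MR3283930} in the form the paper uses it: for an isolated singularity there is a short exact sequence $0 \to i_*H^0 j^X_! \cO_{X^\circ} \to M(f) \to \delta^{\mu} \to 0$, i.e.\ the maximal quotient of $M(f)$ supported at the singular point is $\delta^{\mu}$ (Theorem 2.7 is this exact sequence, not the direct-sum decomposition you attribute to it; the decomposition $M_{\max}\oplus\delta^{\mu-g}$ is what the paper's Corollary after Theorem \ref{t:comp} deduces from Theorems 2.7, 3.1 and Proposition 3.2 combined). With the exact sequence stated, your count can be repaired: the maximal quotient of $M_{\max}$ supported at $0$ is $\delta^{g}$ (since $H^0 j^X_!\cO_{X^\circ}$ has no quotient supported at $0$, and a module supported at a point is a sum of $\delta$'s by Kashiwara), so $g+c=\mu$.

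Once you do quote the exact sequence, however, the whole detour through $M_{\max}$, $c$, the reduced genus $g$, and Lemma \ref{l:dim-gamma} becomes unnecessary — note that $g$ cancels out of your final answer, which is a sign it never needed to enter. The paper's proof is exactly the two-line version: length is additive in the exact sequence $0 \to i_*H^0 j^X_! \cO_{X^\circ} \to M(f) \to \delta^{\mu} \to 0$, and $\len\bigl(H^0 j^X_!\cO_{X^\circ}\bigr)=1+\dim H^{n-2}(X^\circ,\bC)$ by Lemma \ref{l:j!-sub} (kernel $\delta^{\dim H^{n-2}(X^\circ,\bC)}$ over the simple $\IC(X)$), giving $1+\mu+\dim H^{n-2}(X^\circ,\bC)$ directly. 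So: supply the exact sequence explicitly and either complete the $c=\mu-g$ count as above or, better, drop the decomposition argument altogether.
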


\begin{proof} By \cite[Theorem 2.7]{MR3283930}, there is a short exact sequence $0 \to i_*H^0 j^X_! \cO_{X^\circ} \to M(f) \to \delta^{\mu} \to 0.$ Hence the proposition follows from Lemma \ref{l:j!-sub}. 
\end{proof}

The theorem easily follows:

\begin{proof}[Proof of Theorem \ref{t:comp}]
The canonical morphism $M(f) \xrightarrow{\gamma_f} \caD[s] f^s / \caD[s] f^{s+1}$ is surjective since $f$ is quasi-homogeneous, as explained at the beginning of the section. Since $M(f)$ has the same length as 
$\caD[s] f^s / \caD[s] f^{s+1}$ 
by Proposition \ref{prop: length M(f)} and Corollary \ref{cor: length}, $\gamma_f$ is also injective.
\end{proof}

\section{Proof of Theorem \ref{t:lneq-1} and the quotients $\caD f^\lambda / \caD f^{\lambda+1}$}\label{s:pf-t:lneq-1}

The following notion is central to our proof. 

\begin{definition} Let $Z \xhookrightarrow{i_Z} \bA^n$ be a closed embedding and let $M$ be a holonomic left $\caD$-module.
Then the holonomic $\caD$-module $\db_Z(M)$ supported on $Z$ is ${i_Z}_* \ker( H^0 {i_Z}^! M \to H^0 {i_Z}^* M).$ \end{definition}
Here ``db'' stands for ``deltas on the bottom.''


\begin{remark} \label{rmk: db}
It follows directly from the definition that $\db_Z(M)$ is the kernel of the canonical map from the maximal submodule of $M$ supported at $Z$ to the maximal quotient of $M$ supported at $Z.$
\end{remark}

We need the following easy lemma on $\db_Z:$

\begin{lemma} \label{lemma: db}
\begin{enumerate}
\item \label{item: db image}
Let $M\xrightarrow{\alpha}N$ be a morphism of holonomic $\caD$-modules. Then $\alpha(\db_Z(M))\subseteq \db_Z(N).$

\item \label{item: db sums}
The formation of $\db_Z$ commutes with finite direct sums.

\end{enumerate}
\end{lemma}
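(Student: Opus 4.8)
The plan is to verify both parts directly from the characterization of $\db_Z$ in Remark \ref{rmk: db}, namely that $\db_Z(M)$ is the kernel of the canonical map $\operatorname{Sub}_Z(M) \to \operatorname{Quot}_Z(M)$, where $\operatorname{Sub}_Z(M)$ denotes the maximal submodule of $M$ supported (set-theoretically) on $Z$ and $\operatorname{Quot}_Z(M)$ the maximal such quotient. Both assertions are then essentially formal consequences of the functoriality of these extremal sub/quotient constructions, so the proof will be short; the only mild subtlety is making sure the ``canonical map'' is genuinely natural, but this is automatic since it is just the composite $\operatorname{Sub}_Z(M) \hookrightarrow M \twoheadrightarrow \operatorname{Quot}_Z(M)$.

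For part \eqref{item: db image}, I would argue as follows. Given $M \xrightarrow{\alpha} N$, first note that $\alpha(\operatorname{Sub}_Z(M)) \subseteq \operatorname{Sub}_Z(N)$, since the image of a module supported on $Z$ is again supported on $Z$, hence lands in the maximal such submodule. Dually, $\alpha$ induces a morphism $\operatorname{Quot}_Z(M) \to \operatorname{Quot}_Z(N)$: indeed, the composite $M \xrightarrow{\alpha} N \twoheadrightarrow \operatorname{Quot}_Z(N)$ has target supported on $Z$, so by the universal property of the maximal quotient it factors through $\operatorname{Quot}_Z(M)$. These two maps, together with $\alpha$ itself, fit into a commutative diagram with the two canonical maps $\operatorname{Sub}_Z(M) \to \operatorname{Quot}_Z(M)$ and $\operatorname{Sub}_Z(N) \to \operatorname{Quot}_Z(N)$ as the vertical arrows. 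Since $\db_Z(M)$ is the kernel of the left vertical arrow, its image under the top map $\operatorname{Sub}_Z(M) \to \operatorname{Sub}_Z(N)$ (which is the restriction of $\alpha$) is killed by the right vertical arrow, i.e.\ lies in $\db_Z(N)$. Hence $\alpha(\db_Z(M)) \subseteq \db_Z(N)$, as claimed.

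For part \eqref{item: db sums}, given holonomic $M = M_1 \oplus \cdots \oplus M_k$, I would observe that $\operatorname{Sub}_Z$ and $\operatorname{Quot}_Z$ both commute with finite direct sums: a submodule of $M$ supported on $Z$ is contained in $\bigoplus_i \operatorname{Sub}_Z(M_i)$ (project to each factor and use that the projection of a $Z$-supported module is $Z$-supported), and conversely $\bigoplus_i \operatorname{Sub}_Z(M_i)$ is itself supported on $Z$; dually for quotients. Under these identifications the canonical map $\operatorname{Sub}_Z(M) \to \operatorname{Quot}_Z(M)$ is the direct sum of the maps $\operatorname{Sub}_Z(M_i) \to \operatorname{Quot}_Z(M_i)$ (naturality of the construction applied to the inclusions and projections of the summands, using part \eqref{item: db image}), and the kernel of a direct sum of maps is the direct sum of the kernels. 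Therefore $\db_Z(M) = \bigoplus_i \db_Z(M_i)$, up to the canonical identifications.

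I do not expect a genuine obstacle here: everything reduces to the universal properties of the maximal $Z$-supported sub- and quotient-modules of a holonomic D-module and the exactness of finite direct sums. The one point to state carefully is that these extremal sub/quotient modules exist and are well behaved for holonomic modules — this follows from Kashiwara's equivalence (the $Z$-supported part corresponds, locally, to a genuine subquotient) together with the fact that holonomic modules have finite length, so both an ascending union and a descending intersection of $Z$-supported submodules stabilize. With that in hand the two claims are immediate, so I would present the argument in the compact form above rather than belaboring the point-set details.
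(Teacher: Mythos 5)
Your proof is correct and follows exactly the route the paper intends: the paper's own proof is the one-line remark that both claims ``follow easily from the description of $\db_Z$ in Remark \ref{rmk: db}'', and your argument simply spells out that deduction via the functoriality of the maximal $Z$-supported submodule and quotient and their compatibility with finite direct sums. No discrepancies to report.
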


\begin{proof} It follows easily from the description of $db_Z$ in Remark \ref{rmk: db}.
\end{proof}

We will deduce Theorem \ref{t:lneq-1} from the following. We let $(\caD[s] f^s / \caD[s] f^{s+1})_{(\lambda)}$ be the generalized eigenspace under $s$ of eigenvalue $\lambda.$

\begin{theorem} \label{theorem: ker=db}
Suppose that $f$ is quasi-homogeneous with an isolated singularity at $0.$
Let $p$ be the surjective morphism $\caD[s] f^s / \caD[s] f^{s+1} \xrightarrow{\Sigma_\lambda p_\lambda} \bigoplus_\lambda \caD f^\lambda / \caD f^{\lambda+1}$, 
where $p_\lambda$ is the natural surjection $(\mathcal{D}[s]f^s/\mathcal{D}[s]f^{s+1})_{(\lambda)} \to
\mathcal{D}f^{\lambda}/\mathcal{D}f^{\lambda+1}.$ 
Then $\ker p= \ker p_{-1}$ 
is the submodule $\db_{\{0\}}(\caD[s] f^s / \caD[s] f^{s+1})$, and the latter is isomorphic to $\delta^{\dim H^{n-2}(X^\circ, \mathbb{C})}.$
\end{theorem}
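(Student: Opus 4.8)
The plan is to identify the kernel of $p$ in three steps: first show $\ker p = \ker p_{-1}$, then show this kernel equals $\db_{\{0\}}(\caD[s]f^s/\caD[s]f^{s+1})$, and finally compute its dimension via the structure results already established.

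\emph{Step 1: reduction to $\lambda = -1$.} For $\lambda \neq -1$, Theorem \ref{t:lneq-1}\eqref{item: lambda} asserts that $q_\lambda$ is an isomorphism, and $p_\lambda$ factors as the composition of the restriction map $\caD[s]f^s/\caD[s]f^{s+1}|_{s=\lambda} \to \caD f^\lambda/\caD f^{\lambda+1}$ with the projection onto the generalized eigenspace; but by the remark following Corollary \ref{c:lneq-1}, for $\lambda \neq -1$ the action of $s$ on $(s+1)\caD[s]f^s/\caD[s]f^{s+1}$ is semisimple, so the generalized eigenspace is an honest eigenspace and restriction to $s=\lambda$ agrees with it. Hence $p_\lambda$ is injective on the $\lambda$-generalized eigenspace for every $\lambda \neq -1$. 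Therefore $\ker p$, which is the direct sum over $\lambda$ of $\ker p_\lambda$ once we decompose $\caD[s]f^s/\caD[s]f^{s+1}$ into generalized eigenspaces under $s$, collapses to $\ker p_{-1}$. (One must be slightly careful: a priori $\ker p$ could mix eigenspaces, but since $p$ is a direct sum of the $p_\lambda$ after taking generalized eigenspace decomposition, and each summand map is a map of $\caD$-modules, the kernel is the direct sum of the $\ker p_\lambda$.)

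\emph{Step 2: $\ker p_{-1} = \db_{\{0\}}(\caD[s]f^s/\caD[s]f^{s+1})$.} Here is where I would bring in Theorem \ref{t:comp}: the canonical morphism $\gamma_f \colon M(f) \to \caD[s]f^s/\caD[s]f^{s+1}$ is an isomorphism, so it suffices to understand the corresponding kernel inside $M(f)$. The composite $M(f) \xrightarrow{\gamma_f} \caD[s]f^s/\caD[s]f^{s+1} \xrightarrow{p_{-1}} \caD f^{-1}/\caD f^{-1+1} = \caD\frac{1}{f}/R$ is exactly the morphism $\alpha_f$ of Proposition \ref{prop: alpha}, whose kernel is the maximal submodule of $M(f)$ supported at the singularity $0$. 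Now $\gamma_f$ identifies this with the maximal submodule of $\caD[s]f^s/\caD[s]f^{s+1}$ supported at $0$. It remains to see that this maximal submodule supported at $0$ is in fact $\db_{\{0\}}$, i.e., it injects into the maximal quotient supported at $0$ — equivalently, that $\caD[s]f^s/\caD[s]f^{s+1}$ has no quotient supported at $0$ that the maximal sub-at-$0$ fails to see. By the Corollary following Theorem \ref{t:comp}, $\caD[s]f^s/\caD[s]f^{s+1} \cong \delta^{\mu_0 - g_0} \oplus N$ with $N$ admitting a filtration $N_1 \subseteq N_2 \subseteq N_3 = N$, $N_1 \cong \delta^{\dim H^{n-2}(X^\circ,\bC)}$, $N_2/N_1 \cong \IC(X)$, $N_3/N_2 \cong \delta^{g_0}$. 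Using Lemma \ref{lemma: db}\eqref{item: db sums}, $\db_{\{0\}}$ of the whole module is $\db_{\{0\}}(\delta^{\mu_0-g_0}) \oplus \db_{\{0\}}(N)$; the first summand vanishes since $\db_Z$ of a module supported on $Z$ is zero (the maximal sub and maximal quotient are the module itself and the canonical map between them is the identity — its kernel is zero). For $N$: its maximal submodule supported at $0$ is $N_1$ (since $N_2/N_1 = \IC(X)$ is simple and not supported at $0$, and $N$ is indecomposable with $\IC(X)$ as a subquotient), and its maximal quotient supported at $0$ is $N_3/N_2 \cong \delta^{g_0}$; the canonical map $N_1 \to N_3/N_2$ must be zero since $N_1$ lies inside $N_2$. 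Hence $\db_{\{0\}}(N) = N_1 \cong \delta^{\dim H^{n-2}(X^\circ,\bC)}$, and this is precisely the maximal submodule supported at $0$, so it equals $\ker \alpha_f$ transported through $\gamma_f$, i.e., $\ker p_{-1}$.

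\emph{Step 3: the dimension.} This is now immediate from Step 2: $\db_{\{0\}}(\caD[s]f^s/\caD[s]f^{s+1}) \cong N_1 \cong \delta^{\dim H^{n-2}(X^\circ,\bC)}$, where $\dim H^{n-2}(X^\circ,\bC)$ is read off from \cite[(2.9)]{MR3283930} (or Lemma \ref{l:j!-sub}). The main obstacle I anticipate is Step 2, specifically the bookkeeping needed to verify that the maximal submodule of $N$ supported at $0$ is $N_1$ rather than something larger — one needs that $N$ has no submodule supported at $0$ strictly containing $N_1$, which follows because any such submodule would meet $N_2$ in something containing $N_1$ with simple quotient $\IC(X)$, forcing it to contain a copy of $\IC(X)$, contradicting support at $0$; and that the relevant canonical map from the maximal sub to the maximal quotient is zero, which is where one uses that $N_1 \subseteq N_2$ so the composite $N_1 \hookrightarrow N \twoheadrightarrow N/N_2$ vanishes. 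Everything else is a formal consequence of the already-proved Theorem \ref{t:comp}, Proposition \ref{prop: alpha}, Lemma \ref{lemma: db}, and the structure corollary.
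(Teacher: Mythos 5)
There is a genuine gap --- in fact two. First, your Step 1 is circular: you invoke Theorem~\ref{t:lneq-1}(\ref{item: lambda}) (that $q_\lambda$ is an isomorphism for $\lambda\neq-1$) to conclude that $p_\lambda$ is injective, but in the paper Theorem~\ref{t:lneq-1} is \emph{deduced from} Theorem~\ref{theorem: ker=db}, the very statement you are proving, and its content is not free: injectivity of $q_\lambda$ is precisely the nontrivial point (Saito's example shows it can fail once quasi-homogeneity is dropped, even for an isolated singularity). The semisimplicity of $s$ on $(s+1)\caD[s]f^s/\caD[s]f^{s+1}$ only identifies the restriction $|_{s=\lambda}$ with the generalized eigenspace for $\lambda\neq-1$; it says nothing about whether $p_\lambda$ has kernel.

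Second, Step 2 misidentifies the relevant kernel. What Proposition~\ref{prop: alpha} and Theorem~\ref{t:comp} give you is that the kernel of the \emph{full} composite $\caD[s]f^s/\caD[s]f^{s+1}\to\caD\frac{1}{f}/R$ --- which kills every generalized eigenspace with $\lambda\neq-1$, not only part of the $(-1)$-eigenspace --- equals the maximal submodule supported at $0$. Under the structure corollary that maximal submodule is $\delta^{\mu_0-g_0}\oplus N_1$, of length $\mu_0-g_0+\dim H^{n-2}(X^\circ,\bC)$, strictly larger than $\db_{\{0\}}=N_1$ whenever $\mu_0>g_0$; so your assertion that $N_1$ ``is precisely the maximal submodule supported at $0$'' and hence equals $\ker p_{-1}$ is false as stated (you also misstate the definition of $\db$ as ``injects into the maximal quotient,'' though you use the correct kernel definition later). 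To cut $\ker\alpha_f$ down to $\ker p_{-1}$ one must control how much of it lies in the eigenspaces $\lambda\neq-1$, i.e.\ one needs lower bounds on the lengths of the quotients $\caD f^\lambda/\caD f^{\lambda+1}$; this is exactly the substance the paper supplies and your proposal omits. The paper shows $\db_{\{0\}}(\caD[s]f^s/\caD[s]f^{s+1})\subseteq\ker p$ via Lemma~\ref{lemma: db}, because each summand $\caD f^\lambda/\caD f^{\lambda+1}$ has vanishing $\db_{\{0\}}$ (for $\lambda\neq-1$ it is supported at $0$; for $\lambda=-1$ use Lemma~\ref{l:j*maxind}), and then bounds $\len(\ker p)\leq\dim H^{n-2}(X^\circ,\bC)$ by proving $\len\bigl(\bigoplus_\lambda\caD f^\lambda/\caD f^{\lambda+1}\bigr)\geq1+\mu$ using Remark~\ref{remark: associated graded}, Proposition~\ref{prop: Df nearby cycles}, and the semisimplicity of the Milnor monodromy; the equality $\ker p=\ker p_{-1}$ then falls out because $\db_{\{0\}}$ lies in the $(-1)$-generalized eigenspace. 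Your computation $\db_{\{0\}}(\caD[s]f^s/\caD[s]f^{s+1})\cong\delta^{\dim H^{n-2}(X^\circ,\bC)}$ (via Theorem~\ref{t:comp} together with the structure corollary, equivalently \cite[Theorem 2.7]{MR3283930} and Lemma~\ref{l:j!-sub}) is correct and agrees with the paper's first step, but without the eigenvalue/length analysis the identifications $\ker p=\ker p_{-1}=\db_{\{0\}}$ do not follow.
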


\begin{proof} Let us first prove that
  $\db_{\{0\}}(\caD[s] f^s / \caD[s] f^{s+1})$ is isomorphic to
  $\delta^{\dim H^{n-2}(X^\circ, \mathbb{C})}.$ By \cite[Theorem
  2.7]{MR3283930}, there is a short exact sequence
  $0 \to i_*H^0 j^X_! \cO_{X^\circ} \to M(f) \to \delta^{\mu} \to 0$
  and the map $M(f) \to \delta^{\mu}$
is the maximal quotient of $M(f)$ supported at the
  origin. Hence $\db_{\{0\}}(M(f))$ is the maximal submodule of
  $i_*H^0 j^X_! \cO_{X^\circ}$ supported at the origin. By Lemma
  \ref{l:j!-sub}, the latter is isomorphic to
  $\delta^{\dim H^{n-2}(X^\circ, \mathbb{C})}.$ The claim thus follows
  from Theorem \ref{t:comp}.

  We now prove that
  $\db_{\{0\}}(\caD[s] f^s / \caD[s] f^{s+1})\subseteq \ker p.$ We
  have
  $\db_{\{0\}}(\bigoplus_\lambda \caD f^\lambda / \caD
  f^{\lambda+1})=0.$
  Indeed, by (\ref{item: db sums}) of Lemma \ref{lemma: db}, it
  suffices to check that
  $\db_{\{0\}}(\caD f^\lambda / \caD f^{\lambda+1})$ vanishes for all
  $\lambda.$ For $\lambda\not=-1, \caD f^\lambda / \caD f^{\lambda+1}$
  is supported at the origin hence
  $\db_{\{0\}}(\caD f^\lambda / \caD f^{\lambda+1})=0.$ Moreover by
  Lemma \ref{l:j*maxind}, $\caD f^{-1}/R \subseteq H^1_f(R)$ does not
  have any submodule supported at the origin. Thus
  $\db_{\{0\}}(\caD f^{-1}/R)=0.$ We conclude by (\ref{item: db
    image}) of Lemma \ref{lemma: db}.

  Let us then show that the length of $\ker p$ is at most
  $\dim H^{n-2}(X^\circ, \mathbb{C}).$ This immediately implies that
  $\ker p$ is isomorphic to
  $\db_{\{0\}}(\caD[s] f^s / \caD[s] f^{s+1}).$ We claim that the
  length of the module $\bigoplus_\lambda \caD f^\lambda / \caD f^{\lambda+1}$ is
  at least $1 + \mu.$ Since by Corollary \ref{cor: length}, the length
  of $\caD[s] f^s / \caD[s] f^{s+1}$ is
  $1+ \mu + \dim H^{n-2}(X^\circ, \mathbb{C})$, the claim indeed
  implies the upper-bound on the length of $\ker p.$ By Remark
  \ref{remark: associated graded} and Proposition \ref{prop: Df nearby
    cycles}, $\bigoplus_\lambda \caD f^\lambda / \caD f^{\lambda+1}$
  is the associated graded module to a finite filtration on
  $M:= \bigoplus_{\lambda \in \mathbb{C}/\mathbb{Z}}
  \coker(\Psi_f(\cO_U) \xrightarrow{T-e^{2\pi i\lambda}}
  \Psi_f(\cO_U)).$
  Hence their lengths are the same. Let
  $\{0\} \xhookrightarrow{i_0} \mathbb{A}^n$ be the embedding of the
  origin. Since ${i_0}_*H^0i_0^*$ is right-exact, we have that the
  maximal quotient ${i_0}_*H^0i_0^*M$ of $M$ supported at the origin
  is
  $\bigoplus_{\lambda \in \mathbb{C}/\mathbb{Z}}
  \coker({i_0}_*H^0i_0^*\Psi_f(\cO_U) \xrightarrow{T-e^{2\pi
      i\lambda}} {i_0}_*H^0i_0^*\Psi_f(\cO_U)).$
  But the stalk $H^0i_0^*\Psi_f(\cO_U)$ of the nearby cycles at the
  origin is the top reduced cohomology of the Milnor fiber of the
  origin, on which the induced action of $T$ is the usual
  monodromy. Since the monodromy action is semisimple, see for example
  \cite[Chapter 3, Example 1.19]{Dimca}, it follows that the natural
  surjection ${i_0}_*H^0i_0^*\Psi_f(\cO_U) \to {i_0}_*H^0i_0^*M$ is an
  isomorphism. Hence the maximal quotient ${i_0}_*H^0i_0^*M$ of $M$
  supported at the origin is of length $\mu.$ But $M$ contains
  $H^1_f(R)$ and hence is not supported at the origin. Thus its length
  is at least $1+\mu$, as claimed.

Finally, we have that $\db_{\{0\}}(\caD[s] f^s / \caD[s] f^{s+1}) \subset (\caD[s] f^s / \caD[s] f^{s+1})_{(-1)}$, since the other generalized eigenspaces of $\caD[s] f^s / \caD[s] f^{s+1}$ under $s$ are supported at the origin. Hence $$\db_{\{0\}}(\caD[s] f^s / \caD[s] f^{s+1})= \ker p= \ker p_{-1}.$$ This concludes the proof of the theorem.
\end{proof}

\begin{proof}[Proof of Theorem \ref{t:lneq-1}]
By Theorem \ref{theorem: ker=db}, $\ker p= \ker p_{-1}
.$ Hence for all $\lambda\not=-1, \ker q_{\lambda}=0.$ Point (\ref{item: lambda}) immediately follows.

We now prove point (\ref{item: -1}) of the theorem. By definition of $\caD[s] f^s / \caD[s] f^{s+1}|_{s=-1}$, we have a short exact sequence:
$$0\to ((s+1) \caD[s] f^s / \caD[s] f^{s+1})_{(-1)}\to  (\caD[s] f^s / \caD[s] f^{s+1})_{(-1)} \xrightarrow{\pi}\caD[s] f^s / \caD[s] f^{s+1}|_{s=-1} \to0$$ We have $p_{-1}= q_{-1} \circ \pi.$ Hence the kernel $\ker q_{-1}$ is the image $\pi(\ker p_{-1}).$ But by \cite[Theorem 6.18]{MR1943036},
$s$ acts semisimply on $(s+1) \caD[s] f^s / \caD[s] f^{s+1}$ and
$((s+1)\caD[s] f^s / \caD[s] f^{s+1})|_{s=-1} = ((s+1) \caD[s] f^s /
\caD[s] f^{s+1})_{(-1)}= \ker\pi$ is isomorphic to
$\delta^{g_0}.$ 
The result now follows since $\ker p_{-1}$ is isomorphic to
$\delta^{\dim H^{n-2}(X^\circ, \mathbb{C})}$ by Theorem \ref{theorem:
  ker=db}.
\end{proof}

\begin{acknowledgements}
We are grateful to Pavel Etingof for initially 
suggesting a relationship between
$M(X,\theta)$  and $\caD \frac{1}{f} / \mathcal{O}$.
We would like to thank Uli Walther for useful discussions,
particularly for suggesting that Question \ref{q:nontriv} should have an
affirmative answer in the quasi-homogeneous isolated singularity case,
and Johannes Nicaise for pointing out the semisimplicity of monodromy
and the reference \cite{Dimca}, as well as for many useful discussions.
\end{acknowledgements}

\bibliography{master}

\def\cprime{$'$} \def\cprime{$'$} \def\cprime{$'$} \def\cprime{$'$}
\providecommand{\bysame}{\leavevmode\hbox to3em{\hrulefill}\thinspace}
\providecommand{\MR}{\relax\ifhmode\unskip\space\fi MR }
\providecommand{\MRhref}[2]{%
  \href{http://www.ams.org/mathscinet-getitem?mr=#1}{#2}
}
\providecommand{\href}[2]{#2}
\begin{thebibliography}{AMBL05}

\bibitem[AMBL05]{MR2155224}
Josep Alvarez-Montaner, Manuel Blickle, and Gennady Lyubeznik, \emph{Generators
  of {$D$}-modules in positive characteristic}, Math. Res. Lett. \textbf{12}
  (2005), no.~4, 459--473. \MR{2155224 (2006m:13024)}

\bibitem[Bit18]{doi:10.1093/imrn/rny058}
Thomas Bitoun, \emph{Length of local cohomology in positive characteristic and
  ordinarity}, International Mathematics Research Notices (2018), rny058.

\bibitem[dCM09]{MR2525735}
Mark Andrea~A. de~Cataldo and Luca Migliorini, \emph{The decomposition theorem,
  perverse sheaves and the topology of algebraic maps}, Bull. Amer. Math. Soc.
  (N.S.) \textbf{46} (2009), no.~4, 535--633. \MR{2525735}

\bibitem[Dim04]{Dimca}
A.~Dimca, \emph{Sheaves in topology}, Universitext, Springer-Verlag, Berlin,
  2004. \MR{2050072 (2005j:55002)}

\bibitem[ES10]{ESdm}
P.~Etingof and T.~Schedler, \emph{Poisson traces and {$\mathcal{D}$}-modules on
  {P}oisson varieties}, Geom. Funct. Anal. \textbf{20} (2010), no.~4, 958--987,
  arXiv:0908.3868, with an appendix by I. Losev.

\bibitem[ES12]{ES-dmlv}
\bysame, \emph{Coinvariants of {L}ie algebras of vector fields on algebraic
  varieties}, arXiv:1211.1883, 2012.

\bibitem[ES14]{MR3283930}
Pavel Etingof and Travis Schedler, \emph{Invariants of {H}amiltonian flow on
  locally complete intersections}, Geom. Funct. Anal. \textbf{24} (2014),
  no.~6, 1885--1912. \MR{3283930}

\bibitem[Gin86]{MR833194}
V.~Ginsburg, \emph{Characteristic varieties and vanishing cycles}, Invent.
  Math. \textbf{84} (1986), no.~2, 327--402. \MR{833194 (87j:32030)}

\bibitem[Kas83]{MR726425}
M.~Kashiwara, \emph{Vanishing cycle sheaves and holonomic systems of
  differential equations}, Algebraic geometry ({T}okyo/{K}yoto, 1982), Lecture
  Notes in Math., vol. 1016, Springer, Berlin, 1983, pp.~134--142. \MR{726425
  (85e:58137)}

\bibitem[Kas03]{MR1943036}
Masaki Kashiwara, \emph{{$D$}-modules and microlocal calculus}, Translations of
  Mathematical Monographs, vol. 217, American Mathematical Society, Providence,
  RI, 2003, Translated from the 2000 Japanese original by Mutsumi Saito,
  Iwanami Series in Modern Mathematics. \MR{1943036 (2003i:32018)}

\bibitem[Kas77]{MR0430304}
\bysame, \emph{{$B$}-functions and holonomic systems. {R}ationality of roots of
  {$B$}-functions}, Invent. Math. \textbf{38} (1976/77), no.~1, 33--53.
  \MR{0430304 (55 \#3309)}

\bibitem[Mal83]{MR737934}
B.~Malgrange, \emph{Polyn\^omes de {B}ernstein-{S}ato et cohomologie
  \'evanescente}, Analysis and topology on singular spaces, {II}, {III}
  ({L}uminy, 1981), Ast\'erisque, vol. 101, Soc. Math. France, Paris, 1983,
  pp.~243--267. \MR{737934 (86f:58148)}

\bibitem[MS11]{MR2863367}
Mircea Musta{\c{t}}{\u{a}} and Vasudevan Srinivas, \emph{Ordinary varieties and
  the comparison between multiplier ideals and test ideals}, Nagoya Math. J.
  \textbf{204} (2011), 125--157. \MR{2863367}

\bibitem[Sai15]{Sai-dmgrphf}
M.~Saito, \emph{{$D$}-modules generated by rational powers of holomorphic
  functions}, arXiv:1507.01877, 2015.

\bibitem[Ste77]{Ste-ifqhs}
J.~Steenbrink, \emph{Intersection form for quasi-homogeneous singularities},
  Compositio Math. \textbf{34} (1977), no.~2, 211--223. \MR{0453735 (56
  \#11995)}

\bibitem[Yan78]{MR499664}
T.~Yano, \emph{On the theory of {$b$}-functions}, Publ. Res. Inst. Math. Sci.
  \textbf{14} (1978), no.~1, 111--202. \MR{499664}

\end{thebibliography}
\bibliographystyle{amsalpha}
\end{document}